\documentclass[a4paper,12pt]{amsart}
\usepackage{a4wide,fullpage,setspace}
\usepackage{amsmath,amssymb,mathtools,abraces}
\usepackage[T1]{fontenc}
\usepackage[utf8]{inputenc}
\usepackage{lmodern,microtype,accents}
\usepackage[numbers,sort&compress]{natbib}
\usepackage{tikz,tikz-cd}
\usepackage{hyperref}
\usepackage[shortlabels]{enumitem}
\definecolor{dark-red}{rgb}{0.5,0.15,0.15}
\hypersetup{
	colorlinks   = true, 
	urlcolor     = dark-red, 
	linkcolor    = black, 
	citecolor   = dark-red 
}
\swapnumbers

\title{Branching spaces of transverse sets}
\author[P. Gaucher]{Philippe Gaucher}
\address{Universit\'e Paris Cit\'e, CNRS, IRIF, F-75013, Paris, France}
\urladdr{\url{https://www.irif.fr/~gaucher}} 
\subjclass[2020]{55U35,55U99,68Q85}
\keywords{Reedy category, direct category, projective model structure, branching space, precubical set, transverse set, branching, merging, cubical subdivision, directed path, directed homotopy}

\newtheorem{thm}{Theorem}[section]
\newtheorem{proposition}[thm]{Proposition}
\newtheorem{lem}[thm]{Lemma}
\newtheorem{cor}[thm]{Corollary}
\theoremstyle{definition}
\newtheorem{definition}[thm]{Definition}
\newtheorem{remark}[thm]{Remark}
\newtheorem{notation}[thm]{Notation}

\makeatletter
\let\leq\@undefined
\let\geq\@undefined
\let\vec\@undefined
\let\phi\@undefined
\let\epsilon\@undefined
\makeatother

\newcommand{\epsilon}{\varepsilon}
\newcommand{\phi}{\varphi}
\newcommand{\vec}{\overrightarrow}
\newcommand{\leq}{\leqslant}
\newcommand{\geq}{\geqslant}
\newcommand{\Set}{\mathrm{Set}}
\newcommand{\PoSet}{\mathrm{PoSet}}
\newcommand{\A}{\mathcal A}
\newcommand{\Sq}{\square}
\newcommand{\SqhatS}{\wh{\Sq}_S}
\newcommand{\id}{\operatorname{id}}
\newcommand{\op}{\mathrm{op}}
\newcommand{\dd}{\overrightarrow d_1}
\newcommand{\T}{\operatorname{T}}
\newcommand{\C}{\mathcal C}
\newcommand{\Top}{\mathbf{Top}}
\newcommand{\Ob}{\operatorname{Ob}}
\newcommand{\overcat}[1]{\overrightarrow{#1}}
\newcommand{\undercat}[1]{\overleftarrow{#1}}
\newcommand{\levelcat}[1]{\overleftrightarrow{#1}}
\newcommand{\wh}{\widehat}
\newcommand{\lev}{\mathrm{lev}}
\newcommand{\cocartesian}{\arrow[lu, phantom, "\ulcorner"{font=\Large}, pos=0]}

\newcommand{\LA}{\mathcal{L}_{\mathcal{A}}}
\newcommand{\sk}{\operatorname{sk}}
\newcommand{\Pminus}{\mathcal P^-}
\newcommand{\geom}{\mathrm{geom}}
\newcommand{\supp}{\operatorname{supp}}

\begin{document}
\begin{abstract}
	A c-direct category is a small category equipped with an ordinal degree function such that every morphism is level or degree-raising. Every c-direct category is c-Reedy. The c-Reedy model structure on any functor category from a c-direct category to a model category coincides with the projective model structure. In this framework, a realization functor is a colimit-preserving functor satisfying some mild homotopical conditions from the category of presheaves on a c-direct category with cofibrant representables to a model category. We prove that any two such realization functors are weakly equivalent on cofibrant presheaves. For categories of cubes, we prove that thick categories have cofibrant representables. As an application, we introduce the \(\epsilon\)-branching space of an \(\A\)-set for any thick category of cubes \(\A\). It is obtained as a coend over a c-direct category with cofibrant representables constructed from \(\A\). We prove that, on free \(\A\)-sets generated by precubical sets, this new definition coincides with the earlier one. We prove that, for cofibrant \(\A\)-sets, the resulting space is independent of \(\epsilon\) up to homotopy.
\end{abstract}

\maketitle
\setcounter{tocdepth}{1}
\tableofcontents
\hypersetup{linkcolor = dark-red}

\section{Introduction}

\subsection*{Presentation} The branching space of a directed object is meant to record the possible germs of nonconstant directed paths starting from each point. It contains local information about the causal structure. For precubical sets, which are among the standard combinatorial models used in directed algebraic topology \cite{DAT_book}, there is a geometric construction of the \(\varepsilon\)-branching space in terms of natural directed paths of fixed length \(\varepsilon\) in the geometric realization of the precubical set (\cite[Definition~4.2]{cubical-branching} recalled in Definition~\ref{def:old-branching-space}). This construction is particularly useful because it is compatible with the cellular nature of precubical sets: it preserves colimits by \cite[Theorem~5.5]{cubical-branching}. Moreover, it gives rise to a branching homology which is invariant under cubical subdivisions by \cite[Corollary~8.4]{cubical-branching}.

The situation changes when one passes from precubical sets to transverse sets, or more generally to presheaves over larger categories of cubes, which we also refer to generically as transverse sets. These categories were introduced for the geometric study of concurrent processes in \cite{symcub,DirectedDegeneracy}. They contain additional morphisms encoding transverse symmetries or transverse identifications. These larger categories of cubes were introduced in \cite{symcub} to obtain a functorial formalization of the parallel product with synchronizations in process algebra, and in \cite{DirectedDegeneracy} to address the lack of degeneracy maps in precubical sets. Such morphisms are nonidentity level morphisms: they preserve the dimension of a cube, but they need not be isomorphisms. Thus the generalized Reedy point of view of \cite{g-Reedy}, where nonidentity endomorphisms are required to be invertible, does not apply either. As a consequence, the usual direct-category argument for constructing presheaves cell by cell no longer applies literally. More importantly, the naive geometric definition of the \(\varepsilon\)-branching space by natural directed paths does not interact correctly with these level identifications. One of the main points of this paper is to show that this is not a minor technical defect: the naive construction is colimit-preserving precisely in the precubical case by Theorem~\ref{thm:no-colimit-preserving}.

This motivates a different construction. Instead of trying to define the branching space directly from the geometric realization of an \(\A\)-set, we first isolate the categorical structure behind these larger categories of cubes. This leads to the notion of a c-direct category. A c-direct category is a small category equipped with an ordinal degree function such that every morphism is level or degree-raising. Thus direct categories are special cases, but level nonidentity morphisms are allowed. Every c-direct category is c-Reedy in Shulman's sense (Theorem~\ref{thm:c-Reedy-triangular}), and the associated c-Reedy model structure on a functor category coincides with the projective model structure (Theorem~\ref{thm:proj-cof-suff-cond}). This provides a convenient framework for working with presheaves indexed by categories of cubes containing nonidentity level endomorphisms. In this framework, the relevant hypothesis is that the indexing category has cofibrant representables. Under this assumption, realization functors satisfying mild homotopical conditions (Definition~\ref{def:realization}) are invariant on cofibrant presheaves (Theorems~\ref{thm:comparison-realization-1} and \ref{thm:comparison-realization-2}). This generalizes results obtained  in \cite{NaturalRealization} for precubical sets and in \cite{DirectedDegeneracy} for symmetric transverse sets. This abstract statement is useful for branching spaces because it separates two issues: the construction of a colimit-preserving functor on all presheaves, and the proof that the resulting homotopy type is independent of \(\varepsilon\) on cofibrant objects.

We then apply this general machinery to categories of cubes. A thick category of cubes is one for which the standard cotransverse--cocubical factorization is internal to the category (Definition~\ref{def:thickness}). We prove that every thick category of cubes has cofibrant representables (Theorem~\ref{thm:thick-notcaracterization}). Hence, for every thick category of cubes \(\A\), one can define a colimit-preserving \(\varepsilon\)-branching space functor on \(\A\)-sets by a coend over the c-direct category \(\A^{-}\) of maps preserving the initial vertex. This gives a new definition  (Definition~\ref{def:branching-space}) which is adapted to transverse identifications and still has the colimit-preservation property.

The new construction agrees with the earlier one on \(\A\)-sets freely generated by precubical sets (Theorem~\ref{thm:comparison-with-precubical-branching}). Thus it does not change the classical precubical theory. Its role is rather to give the correct extension of the branching-space construction to general thick categories of cubes, where the naive path-space construction no longer preserves colimits.

\subsection*{Outline of the paper} Section~\ref{sec:c-direct} introduces c-direct categories and records the basic calculations used later. After recalling the notion of a c-Reedy category, Section~\ref{sec:c-reedy} establishes that every c-direct category is c-Reedy and proves that the c-Reedy model structure on functor categories coincides with the projective model structure. Section~\ref{sec:realization} explores the notion of a realization of a presheaf over a c-direct category with cofibrant representables and proves the resulting invariance theorems. Section~\ref{sec:thickness} recalls the definition of a thick category of cubes, gives a characterization of thickness, and proves that thick categories of cubes have cofibrant representables. Section~\ref{sec:branching} introduces the \(\epsilon\)-branching space of an \(\A\)-set and proves its invariance up to homotopy with respect to \(\epsilon\). Section~\ref{sec:old-branching} explains why the geometric method for defining a branching space used in \cite{cubical-branching} gives rise to a colimit-preserving functor only in the precubical case. Finally, Section~\ref{sec:free-transverse} proves that the \(\epsilon\)-branching space of an \(\A\)-set freely generated by a precubical set coincides with the earlier definition given in \cite{cubical-branching}.

\section{c-direct categories}
\label{sec:c-direct}

Let \(\A\) be a small category. An \emph{\(\A\)-set} is a presheaf on \(\A\), that is, a functor
\[
K:\A^\op\longrightarrow\Set.
\]
The category of \(\A\)-sets is denoted by
\[
\A^\op\Set.
\]

\begin{definition} \label{def:c-direct}
	A small category \(\A\) is called \emph{c-direct} if there exists a degree function
	\[
	d:\Ob(\A)\longrightarrow \mathrm{Ord}
	\]
	with values in the class of ordinals such that, for every morphism \(f:x\to y\) of \(\A\), one has
	\[
	d(x)\leq d(y).
	\] 
\end{definition}

Every direct category is c-direct. This notion should not be confused with Shulman's stratified categories: by \cite[Theorem~4.11]{c-Reedy}, a small category is stratified precisely when it admits an ordinal degree function for which every morphism non-strictly decreases degree. Thus, with the same degree function, the opposite of a c-direct category is stratified in Shulman's sense. The comparison relevant for the present paper is instead with Shulman's c-Reedy categories; see Theorem~\ref{thm:c-Reedy-triangular} below.

\begin{notation}
	For the rest of the paper, a c-direct category \(\A\) is fixed.
\end{notation}

For \(n\geq 0\), let \(\A_{<n}\) denote the full subcategory of \(\A\) spanned by the objects \(x\) such that \(d(x)<n\). For objects \(y,z\) of \(\A\), set \cite[Page~37]{c-Reedy}
\[
\partial_n\A(y,z)
=
\int^{x\in \A_{<n}}
\A(x,z)\times \A(y,x).
\]
There is a canonical map
\[
\gamma: \partial_n\A(y,z)\longrightarrow \A(y,z)
\]
induced by composition, namely
\[
[u: x\to z,\,v: y\to x]
\longmapsto
u\circ v.
\]
Here \([-]\) denotes the equivalence class in the coend. 

\begin{proposition} \label{prop:triangularity-coend}
	The following hold.
	\begin{enumerate}
		\item If \(d(y)>d(z)\) or \(n\leq d(y)\), then
		\[
		\partial_n\A(y,z)=\varnothing .
		\]
		\item If \(d(y)\leq d(z)\) and \(d(y)<n\), then the canonical map
		\[
		\gamma: \partial_n\A(y,z)\longrightarrow \A(y,z)
		\]
		is a bijection. Thus, through this canonical identification,
		\[
		\partial_n\A(y,z)\cong \A(y,z).
		\]
	\end{enumerate}
\end{proposition}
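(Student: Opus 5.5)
Part (1) follows directly from the coend formula. An element of \(\partial_n\A(y,z)\) is a class \([u,v]\) with \(x\in\A_{<n}\), \(v:y\to x\) and \(u:x\to z\). By Definition~\ref{def:c-direct}, the existence of \(v\) and \(u\) forces \(d(y)\leq d(x)\leq d(z)\), and membership in \(\A_{<n}\) forces \(d(x)<n\). Hence \(d(y)\leq d(z)\) and \(d(y)<n\). If either inequality fails, then every summand \(\A(x,z)\times\A(y,x)\) with \(x\in\A_{<n}\) is empty. The coend is a quotient of the coproduct of these summands, so it is empty.

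For part (2), assume \(d(y)\leq d(z)\) and \(d(y)<n\). Then \(y\) itself lies in \(\A_{<n}\), and I will use it as the intermediate object. Define a section \(\sigma:\A(y,z)\to\partial_n\A(y,z)\) by \(\sigma(f)=[f,\id_y]\), with the class taken in the summand indexed by \(x=y\). Clearly \(\gamma\sigma(f)=f\circ\id_y=f\), so \(\gamma\) is surjective.

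To show injectivity, I will prove \(\sigma\gamma=\id\). Take an arbitrary class \([u,v]\) with \(x\in\A_{<n}\), \(v:y\to x\) and \(u:x\to z\). The coend relation for a morphism \(g:x'\to x\) of \(\A_{<n}\) identifies \([u\circ g,v']\) with \([u,g\circ v']\) for \(u\in\A(x,z)\) and \(v'\in\A(y,x')\). Apply it with \(x'=y\), \(g=v\) and \(v'=\id_y\). This gives \([u\circ v,\id_y]=[u,v\circ\id_y]=[u,v]\). For this step \(v:y\to x\) must be a morphism of \(\A_{<n}\), which holds because \(\A_{<n}\) is full and contains both \(y\) and \(x\). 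Thus \([u,v]=\sigma(u\circ v)=\sigma\gamma[u,v]\), so \(\sigma\) and \(\gamma\) are mutually inverse bijections.

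The only point needing care is the membership \(y\in\A_{<n}\) together with the fullness of \(\A_{<n}\). This is exactly the co-Yoneda argument restricted to the subcategory: the presheaf \(\A(y,-)\) restricted to \(\A_{<n}\) is representable there. The hypothesis \(d(y)\leq d(z)\) is used only for consistency with part (1). Without it, \(\A(y,z)\) is empty anyway by Definition~\ref{def:c-direct}, and the bijection holds trivially.
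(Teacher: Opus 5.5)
Your proof is correct and follows the paper's argument essentially step for step. In part (1) you show every summand of the coend is empty. In part (2) you use the explicit inverse \(\sigma(f)=[f,\id_y]\) and apply the coend relation to \(v\), which is legitimate because \(\A_{<n}\) is full and contains \(y\).
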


\begin{proof}
	Recall first that the coend is the quotient of the disjoint union
	\[
	\coprod_{x\in \Ob(\A_{<n})}
	\A(x,z)\times \A(y,x)
	\]
	by the relations
	\[
	(u\circ a,v)\sim (u,a\circ v)
	\]
	for every morphism \(a: x\to x'\) in \(\A_{<n}\), every \(u: x'\to z\), and every \(v: y\to x\). Assume first that \(d(y)>d(z)\). If a summand
	\[
	\A(x,z)\times \A(y,x)
	\]
	were nonempty, then there would be morphisms
	\[
	y\longrightarrow x\longrightarrow z.
	\]
	By the c-direct condition, these two morphisms imply
	\[
	d(y)\leq d(x)\leq d(z),
	\]
	which contradicts \(d(y)>d(z)\). Hence every summand is empty, and therefore
	\[
	\partial_n\A(y,z)=\varnothing .
	\]
	Assume now that \(n\leq d(y)\). If \(x\) is an object of \(\A_{<n}\),
	then
	\[
	d(x)<n\leq d(y),
	\]
	so \(d(y)>d(x)\). By the c-direct condition,
	\[
	\A(y,x)=\varnothing .
	\]
	Thus every summand
	\[
	\A(x,z)\times \A(y,x)
	\]
	is empty, and again
	\[
	\partial_n\A(y,z)=\varnothing .
	\]
	This proves the first assertion. It remains to prove the second assertion. Assume that
	\[
	d(y)\leq d(z)
	\qquad\text{and}\qquad
	d(y)<n.
	\]
	Then \(y\) is an object of \(\A_{<n}\). Define
	\[
	\sigma: \A(y,z)\longrightarrow \partial_n\A(y,z)
	\]
	by
	\[
	\sigma(f)=[f: y\to z,\operatorname{id}_y: y\to y].
	\]
	This is well-defined because \(y\in \A_{<n}\). By construction,
	\[
	\gamma\sigma(f)
	=
	\gamma([f,\operatorname{id}_y])
	=
	f\circ \operatorname{id}_y
	=
	f.
	\]
	Hence
	\[
	\gamma\sigma=\operatorname{id}_{\A(y,z)}.
	\]	
	Conversely, let
	\[
	[u: x\to z,\,v: y\to x]
	\]
	be an element of \(\partial_n\A(y,z)\), with \(x\in \A_{<n}\). Since also \(y\in \A_{<n}\) and \(\A_{<n}\) is a full subcategory of \(\A\), the morphism \(v: y\to x\) belongs to \(\A_{<n}\). Applying the defining coend relation to the morphism \(v: y\to x\) gives
	\[
	[u,v]=[u\circ v,\operatorname{id}_y].
	\]
	Therefore
	\[
	[u,v]
	=
	\sigma(u\circ v)
	=
	\sigma\gamma([u,v]).
	\]
	Thus
	\[
	\sigma\gamma=\operatorname{id}_{\partial_n\A(y,z)}.
	\]
	The maps \(\gamma\) and \(\sigma\) are inverse bijections. Hence
	\[
	\partial_n\A(y,z)\cong \A(y,z)
	\]
	canonically, through the composition map \(\gamma\).
\end{proof}

\begin{proposition} \label{prop:lower-degree-part}
	Fix an object \(x\) of \(\A\). Consider the presheaf
	\[
	F_x
	:=
	\int^{y\in \A_{<d(x)}} \A(y,x)\cdot \A(-,y),
	\]
	where \(\A(-,y)\) is the representable presheaf and where \(\cdot\) denotes the copower of a presheaf by a set. For every object \(z\) of \(\A\), there is a canonical bijection
	\[
	F_x(z)
	\cong
	\begin{cases}
		\A(z,x), & \text{if } d(z)<d(x),\\[4pt]
		\varnothing, & \text{if } d(z)\geq d(x).
	\end{cases}
	\]
	Thus \(F_x\) is the strict lower-degree part of the representable presheaf \(\A(-,x)\). It is denoted by \(\partial\A(-,x)\). 
\end{proposition}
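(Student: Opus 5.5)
Colimits of presheaves are computed objectwise, and copowers commute with evaluation. So I will first evaluate the coend at a fixed object \(z\) and reduce the statement to Proposition~\ref{prop:triangularity-coend}. Concretely, evaluating at \(z\) gives
\[
F_x(z)\cong \int^{y\in \A_{<d(x)}} \A(y,x)\times \A(z,y),
\]
since \((\A(y,x)\cdot \A(-,y))(z)=\A(y,x)\times\A(z,y)\). By definition this is exactly \(\partial_n\A(z,x)\) with \(n=d(x)\), with the roles of the letters renamed: the coend variable is \(y\), the source is \(z\) and the target is \(x\). Since the coend relations match after this renaming, the identification is canonical and natural in \(z\).

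The two cases then follow from Proposition~\ref{prop:triangularity-coend}. If \(d(z)\geq d(x)=n\), its first assertion (the case \(n\leq d(z)\)) gives \(F_x(z)=\varnothing\). If \(d(z)<d(x)\), then both \(d(z)\leq d(x)\) and \(d(z)<n\) hold, so its second assertion shows that composition \(\gamma\colon F_x(z)\to\A(z,x)\) is a bijection.

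One small point needs care: the degree \(d(x)\) is an ordinal, while Proposition~\ref{prop:triangularity-coend} was phrased with \(n\geq 0\). Its proof only uses strict comparisons of ordinals, so it applies verbatim to any ordinal \(n\). I will note this explicitly.

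Finally, I will check that these bijections assemble into a description of \(F_x\) as the subpresheaf of \(\A(-,x)\) supported in degrees \(<d(x)\). The maps \(\gamma\) are natural in \(z\) because precomposition with \(g\colon z'\to z\) acts on the second factor of each class \([u,v]\), and \(\gamma\) commutes with it. This identifies \(F_x\) with the strict lower-degree part. I expect no real obstacle; the only step needing attention is the evaluation-of-coends identification together with the ordinal remark.
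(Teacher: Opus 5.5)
Your proof is correct and follows the paper's route: evaluate pointwise to get \(F_x(z)\cong\int^{y\in\A_{<d(x)}}\A(y,x)\times\A(z,y)\), show this is empty when \(d(z)\geq d(x)\), and show the composition map \(\gamma_z\) is bijective when \(d(z)<d(x)\). The only difference is that the paper repeats the argument of Proposition~\ref{prop:triangularity-coend} inline (the section \(f\mapsto[f,\id_z]\) and the coend relation along \(v\colon z\to y\)), whereas you recognize \(F_x(z)\) as \(\partial_{d(x)}\A(z,x)\) and cite that proposition; this is legitimate, and your remark about ordinal-valued \(n\) is a harmless clarification.
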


\begin{proof}
	Since colimits in the presheaf category are computed pointwise, for every object \(z\) of \(\A\), we have
	\[
	F_x(z)
	=
	\int^{y\in \A_{<d(x)}} \A(y,x)\times \A(z,y).
	\]
	This coend is the quotient of the disjoint union
	\[
	\coprod_{y\in \A_{<d(x)}} \A(y,x)\times \A(z,y)
	\]
	by the usual coend relations. Explicitly, if
	\[
	a: y\longrightarrow y'
	\]
	is a morphism in \(\A_{<d(x)}\), and if
	\[
	u: y'\longrightarrow x,
	\qquad
	v: z\longrightarrow y,
	\]
	then
	\[
	(u\circ a,v)\sim (u,a\circ v).
	\]
	There is a canonical map
	\[
	\gamma_z: F_x(z)\longrightarrow \A(z,x)
	\]
	induced by composition:
	\[
	[u: y\to x,\ v: z\to y]
	\longmapsto
	u\circ v.
	\]
	Suppose first that \(d(z)\geq d(x)\). For every object \(y\in \A_{<d(x)}\), one has
	\[
	d(y)<d(x)\leq d(z),
	\]
	hence \(d(z)>d(y)\). By the c-direct condition,
	\[
	\A(z,y)=\varnothing.
	\]
	Therefore every summand
	\[
	\A(y,x)\times \A(z,y)
	\]
	is empty, and consequently
	\[
	F_x(z)=\varnothing.
	\]
	Suppose now that \(d(z)<d(x)\). Then \(z\) belongs to the full subcategory \(\A_{<d(x)}\). Hence every morphism
	\[
	f: z\longrightarrow x
	\]
	defines an element of \(F_x(z)\) by the factorization through \(z\):
	\[
	z \xrightarrow{\operatorname{id}_z} z \xrightarrow{f} x.
	\]
	This gives a map
	\[
	\sigma_z: \A(z,x)\longrightarrow F_x(z),
	\qquad
	f\longmapsto [f,\operatorname{id}_z].
	\]
	It is immediate that
	\[
	\gamma_z\sigma_z(f)=f
	\]
	for every \(f: z\to x\).
	
	Conversely, let
	\[
	[u,v]\in F_x(z)
	\]
	be represented by morphisms
	\[
	z\xrightarrow{v} y\xrightarrow{u} x
	\]
	with \(y\in \A_{<d(x)}\). Since \(d(z)<d(x)\), the object \(z\) also belongs to \(\A_{<d(x)}\). Because \(\A_{<d(x)}\) is full, the morphism
	\[
	v: z\longrightarrow y
	\]
	is a morphism of \(\A_{<d(x)}\). Therefore the coend relation applied to \(v\) gives
	\[
	[u,v]=[u\circ v,\operatorname{id}_z].
	\]
	Equivalently,
	\[
	[u,v]=\sigma_z\gamma_z([u,v]).
	\]
	Thus \(\sigma_z\gamma_z=\operatorname{id}_{F_x(z)}\), while we already have \(\gamma_z\sigma_z=\operatorname{id}_{\A(z,x)}\). Hence \(\gamma_z\) is a bijection
	\[
	F_x(z)\cong \A(z,x)
	\]
	when \(d(z)<d(x)\). Combining the two cases proves the assertion.
\end{proof}

\begin{notation}
	For \(x\in \Ob(\A)\), set 
	\[
	\A[x] = \A(-,x), \qquad \partial\A[x] = \partial\A(-,x).
	\]
\end{notation}

\section{c-Reedy categories}
\label{sec:c-reedy}

Let \(f\) be a morphism of \(\A\). The \textit{category of factorizations} of \(f\) has for objects the pairs \[(h,g)\qquad \hbox{such that} \qquad hg=f\] and for morphisms \(k:(h,g)\to (h',g')\) the morphisms \(k\) of \(\A\) (which are called \textit{connecting morphisms}) such that there is a commutative diagram 
\[
\begin{tikzcd}[row sep=3em,column sep=7em]
	\bullet \arrow[r,"g'"] \arrow[equal,d]  & \bullet \arrow[r,"h'"]& \bullet \arrow[equal,d] \\
	\bullet \arrow[r,"g"] & \arrow[u,"k"]\bullet \arrow[r,"h"] & \bullet 
\end{tikzcd}
\]
We have (\cite[Definition~6.12]{c-Reedy})
	\begin{enumerate}
		\item A morphism is \textit{level} if its domain and codomain have the same degree.
		\item The \textit{degree} of a factorization \((h,g)\) of a morphism \(f\) is the degree of the intermediate object (i.e. the domain of \(h\) which is the codomain of \(g\)).
		\item A factorization of a morphism \(f\) is \textit{fundamental} if its degree is strictly less than the degrees of both the domain and codomain of \(f\).
		\item A morphism is \textit{basic} if it does not admit any fundamental factorization. 
	\end{enumerate}
Denote by \(\A_{\lev}\) the wide subcategory of \(\A\) whose morphisms are the level morphisms, that is,
\[
\A_{\lev}(x,y)
=
\begin{cases}
	\A(x,y), & d(x)=d(y),\\
	\varnothing, & d(x)\neq d(y).
\end{cases}
\]
This is indeed a subcategory of \(\A\): identities are level, and the composite of two level morphisms is level. The \textit{\(\delta\)-th stratum} of \(\A\), denoted by \[\A_{=\delta},\] is the subcategory of \(\A\) generated by the objects of degree \(\delta\) and by the basic morphisms between them \cite[page~37]{c-Reedy}. Since \(\A\) is c-direct, all morphisms are therefore basic. Thus the \(\delta\)-th stratum is the full subcategory generated by all objects of \(\A\) of degree \(\delta\). 

We use the following form of the c-Reedy axioms (see \cite[Definition~8.25]{c-Reedy}). A small category \(\C\) equipped with wide subcategories
\(
\overcat{\C},
\undercat{\C},
\levelcat{\C}
\)
called respectively the direct, the inverse and the level subcategories and with a degree function is \textit{c-Reedy} if:
\begin{enumerate}
	\item \(\levelcat{\C}=\overcat{\C}\cap \undercat{\C}\);
	\item every morphism of \(\overcat{\C}\setminus \levelcat{\C}\) strictly raises degree;
	\item every morphism of \(\undercat{\C}\setminus \levelcat{\C}\) strictly lowers degree;
	\item every morphism \(f\) of \(\C\) admits a factorization
	\[
	f=\overcat f\circ \undercat f
	\]
	with \(\undercat f\in \undercat{\C}\) and \(\overcat f\in \overcat{\C}\), and the category of such factorizations is non-empty and connected, with connecting morphisms in \(\levelcat{\C}\);
	\item for every object \(x\), the restriction of \(\undercat{\C}(x,-)\) to each lower degree is a coproduct of representables.
\end{enumerate}

\begin{thm} \label{thm:c-Reedy-triangular}
	Consider the wide subcategories
	\[
	\overcat{\A}=\A,
	\qquad
	\undercat{\A}=\levelcat{\A}=\A_{\lev}.
	\]
	Then \(\A\) is a c-Reedy category for this choice of the direct, inverse, and level subcategories.
\end{thm}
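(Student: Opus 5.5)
We check the five c-Reedy axioms in turn, using the degree function \(d\) of Definition~\ref{def:c-direct} as the degree and the choice \(\overcat{\A}=\A\), \(\undercat{\A}=\levelcat{\A}=\A_{\lev}\).

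Axioms (1)--(3) are immediate. For (1), \(\overcat{\A}\cap\undercat{\A}=\A\cap\A_{\lev}=\A_{\lev}=\levelcat{\A}\). For (2), a morphism of \(\overcat{\A}\setminus\levelcat{\A}\) is a non-level morphism \(f:x\to y\). The c-direct condition gives \(d(x)\leq d(y)\), and non-levelness excludes equality, so \(f\) strictly raises degree. For (3), \(\undercat{\A}\setminus\levelcat{\A}=\varnothing\), so the condition is vacuous.

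Axiom (4) is the real content. Let \(f:x\to y\). An admissible factorization is a pair \((h,g)\) with \(g:x\to w\) in \(\A_{\lev}\), so \(d(w)=d(x)\), and \(h:w\to y\) arbitrary, with \(hg=f\). The pair \((f,\id_x)\) is such a factorization, so the category is non-empty. For connectedness, take any admissible \((h,g)\) with \(g:x\to w\). In the diagram defining connecting morphisms, take \(k=g:x\to w\). We have \(g\circ\id_x=g\), and \(h\circ g=f\) shows that the top composite \(h\) precomposed with \(k\) is \(f\). Hence \(g\) is a connecting morphism \((f,\id_x)\to(h,g)\) (or in the reverse direction, depending on the orientation convention in the diagram; I would fix the convention and check it carefully). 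Since \(g\) is level, every object is joined by a level connecting morphism to the fixed object \((f,\id_x)\), which is in fact initial. This gives non-emptiness and connectedness with connecting morphisms in \(\levelcat{\A}\). The only delicate point is matching the diagram's direction of \(k\) with Shulman's convention: the equations needed are \(kg=g'\) and \(h'k=h\) (or their mirror), and both are satisfied by taking the pair \((f,\id)\) on the appropriate side.

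Axiom (5) asks that the restriction of \(\undercat{\A}(x,-)=\A_{\lev}(x,-)\) to each degree \(\delta<d(x)\) be a coproduct of representables. This restriction is empty, since level morphisms out of \(x\) land in degree \(d(x)\), and the empty functor is the empty coproduct. If the axiom is read as including degree \(d(x)\) or the strata, the restriction is still a coproduct of representables of the stratum \(\A_{=d(x)}\): the observation above shows that the stratum is the full subcategory on degree-\(d(x)\) objects, and \(\A_{\lev}(x,-)\) restricted there is just the representable \(\A_{=d(x)}(x,-)\). I expect the main obstacle to be neither computation but pinning down the precise form of Shulman's Definition~8.25, especially axiom (5) and the orientation in axiom (4), so that these trivial verifications match the intended statement.
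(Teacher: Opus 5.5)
Your proposal is correct and follows the paper's proof essentially verbatim. It makes the same immediate checks of (1)--(3) and uses the canonical factorization \((f,\id_x)\) with the level map \(g\) itself as connecting morphism. With the paper's convention \(kg=g'\), \(h'k=h\), that morphism goes from \((f,\id_x)\) to \((h,g)\), exactly as you wrote. For (5) it uses the emptiness of \(\A_{\lev}(x,-)\) in lower degrees. Your extra remarks are correct but not needed: \((f,\id_x)\) is in fact initial, and the restriction to degree \(d(x)\) is representable.
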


\begin{proof}
	We check the c-Reedy axioms. First, the intersection condition is immediate. Since \(\overcat{\A}=\A\) and \(\undercat{\A}=\A_{\lev}\), one has
	\[
	\overcat{\A}\cap \undercat{\A}
	=
	\A\cap \A_{\lev}
	=
	\A_{\lev}
	=
	\levelcat{\A}.
	\]	
	Next, let \(f:x\to y\) be a morphism of \(\overcat{\A}\setminus \levelcat{\A}\). Since \(\overcat{\A}=\A\), this just means that \(f\) is a morphism of \(\A\) which is not level. The c-direct condition gives
	\[
	d(x)\leq d(y).
	\]
	Since \(f\) is not level, \(d(x)\neq d(y)\). Therefore
	\[
	d(x)<d(y).
	\]
	Thus every non-level morphism of \(\overcat{\A}\) strictly raises degree. The corresponding condition for the inverse part is vacuous, because
	\[
	\undercat{\A}=\levelcat{\A}.
	\]
	Hence there is no morphism in \(\undercat{\A}\setminus\levelcat{\A}\). It remains to verify the factorization axiom. Let
	\[
	f:x\longrightarrow y
	\]
	be a morphism of \(\A\). There is a canonical factorization
	\[
	x \xlongrightarrow{\id_x} x \xlongrightarrow{f} y,
	\]
	where \(\id_x\) belongs to \(\undercat{\A}=\A_{\lev}\) and \(f\) belongs to \(\overcat{\A}=\A\). Thus the category of c-Reedy factorizations of \(f\) is nonempty. Let
	\[
	x \xlongrightarrow{\ell} z \xlongrightarrow{g} y
	\]
	be any other factorization of \(f\), with
	\[
	\ell\in \undercat{\A}=\A_{\lev},
	\qquad
	g\in \overcat{\A}=\A,
	\qquad
	g\ell=f.
	\]
	Since \(\ell\) is level, it belongs to \(\levelcat{\A}\). Therefore \(\ell:x\to z\) defines a morphism in the factorization category from the canonical factorization
	\[
	x \xlongrightarrow{\id_x} x \xlongrightarrow{f} y
	\]
	to the factorization
	\[
	x \xlongrightarrow{\ell} z \xlongrightarrow{g} y.
	\]
	Indeed, the required equalities are
	\[
	\ell\id_x=\ell
	\qquad
	\text{and}
	\qquad
	g\ell=f.
	\]
	Thus \(\ell\) gives a morphism in the factorization category. Hence every factorization is connected to the canonical one. The factorization category is therefore connected. Finally, the remaining c-Reedy axiom concerning the inverse part in strictly smaller degree is automatic. For every object \(x\) and every degree \(\alpha<d(x)\), the restriction of
	\[
	\undercat{\A}(x,-)
	\]
	to the full subcategory of objects of degree \(\alpha\) is the empty functor, because \(\undercat{\A}=\A_{\lev}\) contains only morphisms preserving degree. The empty functor is the empty coproduct of representables. Thus the required representability condition is satisfied. All c-Reedy axioms are therefore satisfied.
\end{proof}

\begin{notation}
	The category of functors from a small category \(\C\) to a locally small category \(\mathcal{M}\) together with the natural transformations is denoted by \(\mathcal{M}^\C\).
\end{notation}

When \(\mathcal{M}\) is a cocomplete category, by \cite[Remark~3.2.3]{coend-calculus}, there is a bijective correspondence between the objects \(A\) of \(\mathcal{M}^\C\) and the colimit-preserving functors \[\wh A:\C^\op\Set\longrightarrow \mathcal{M}\] from the presheaves over \(\C\) to \(\mathcal{M}\) with \[\wh A(K)=\int^{x\in\C} K(x)\cdot A(x).\]

\begin{notation}
	For the rest of the paper, \(\mathcal{M}\) denotes a model category such that the projective model structure on \[\mathcal{M}^{\A_{=\delta}}\] exists for all \(\delta\geq 0\) (e.g. any combinatorial or accessible model category). 
\end{notation}

\begin{notation} \label{notation:matching-latching-object}
	Let \(n\geq 0\). The latching and matching object functors \(L_n,M_n:\mathcal{M}^{\A}\to \mathcal{M}^{\A_{=n}}\) are given by (see \cite[page~37]{c-Reedy})
	\begin{align*}
		& (M_nA)_{y} = \int_{z\in \A} A(z)^{\partial_{n}\A(y,z)} \\
		& (L_nA)_{y} = \int^{z\in \A}\partial_n\A(z,y) \cdot A(z) 
	\end{align*}
\end{notation}

We obtain: 

\begin{thm} \label{thm:c-Reedy-model}
	Let \(\mathcal{M}\) be a model category. Suppose that the projective model structure on \(\mathcal{M}^{\A_{=n}}\) exists for all \(n\geq 0\). There exists a unique model structure on \(\mathcal{M}^{\A}\) such that 
	\begin{itemize}
		\item The weak equivalences are objectwise.
		\item A map \(A\to B\) of \(\mathcal{M}^{\A}\) is a fibration (trivial fibration resp.) if for all \(n\geq 0\), the map \[A(x)\longrightarrow (M_nA)_{x}\times_{(M_nB)_{x}} B(x)\] is a fibration (trivial fibration resp.) of \(\mathcal{M}\) for all \(x\in\Ob(\mathcal{M}^{\A_{=n}})\).
		\item A map \(A\to B\) of \(\mathcal{M}^{\A}\) is a cofibration (trivial cofibration resp.) if for all \(n\geq 0\), \[L_nB\sqcup_{L_nA} A\longrightarrow B\] is a projective cofibration (trivial cofibration resp.) of the projective model structure of \(\mathcal{M}^{\A_{=n}}\).
	\end{itemize}
	This model structure is called the \textit{c-Reedy model structure} of \(\mathcal{M}^{\A}\). 
\end{thm}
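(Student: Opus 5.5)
This theorem is a specialization of Shulman's general existence theorem for c-Reedy model structures, so the plan is to reduce to it rather than to rebuild the model structure by hand. By Theorem~\ref{thm:c-Reedy-triangular}, \(\A\) with \(\overcat{\A}=\A\) and \(\undercat{\A}=\levelcat{\A}=\A_{\lev}\) satisfies the c-Reedy axioms in the form of \cite[Definition~8.25]{c-Reedy}. Shulman's main theorem on c-Reedy categories then states the following: if for each degree \(n\) one has a model structure on \(\mathcal{M}^{\A_{=n}}\) such that the latching and matching constructions are suitably compatible, one gets a model structure on \(\mathcal{M}^{\A}\). In that structure the cofibrations are detected by relative latching maps and the fibrations by relative matching maps, both in the chosen stratum model structures. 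I would take the projective model structure on each stratum \(\mathcal{M}^{\A_{=n}}\), which exists by hypothesis, and check that it meets Shulman's hypotheses.

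The main obstacle is this compatibility condition, the counterpart of Shulman's ``Reedy-admissible'' requirement. In the general c-Reedy setting one needs the stratum model structures to interact well with the functors \(\partial_n\A(-,-)\) and with the weighted limits and colimits defining \(M_n\) and \(L_n\). My approach is to show that the condition degenerates here because the inverse part is trivial.

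\textbf{Matching side.} By Proposition~\ref{prop:triangularity-coend}(1), \(\partial_n\A(y,z)=\varnothing\) whenever \(n\leq d(y)\). For \(y\) of degree \(n\), the matching object \((M_nA)_y\) is therefore an end of \(A(z)^{\varnothing}\), hence terminal. The fibration condition then reduces to \(A(x)\to B(x)\) being a fibration for every \(x\), which is exactly the projective fibration condition on each stratum. So the weighted-limit part of admissibility only involves terminal objects, and it holds automatically.

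\textbf{Latching side.} By Proposition~\ref{prop:triangularity-coend}(2), for \(d(z)<n\) the weight \(\partial_n\A(z,y)\) is identified with \(\A(z,y)\). Hence \(L_nA\) is the left Kan extension along \(\A_{<n}\hookrightarrow\A\), restricted to the stratum. This shows that \(L_n\) is a left adjoint, and that the weights involved are coproducts of representables in the projective sense. With this, the relevant Leibniz/pushout-product condition for projective cofibrations follows from the standard cellular argument.

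With admissibility in hand, I would invoke Shulman's theorem, in the form of \cite[Theorem~8.26]{c-Reedy} or nearby, to obtain the model structure with the stated cofibrations and fibrations, and then identify the weak equivalences. In the c-Reedy structure, weak equivalences are those that are levelwise weak equivalences in each stratum's structure. For the projective stratum structures these are objectwise weak equivalences, and together these give objectwise weak equivalences on \(\mathcal{M}^{\A}\). Uniqueness is formal: a model structure is determined by its cofibrations and weak equivalences, and both are prescribed here.
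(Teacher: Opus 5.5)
Your overall route is the paper's: Theorem~\ref{thm:c-Reedy-triangular} places $\A$ in Shulman's c-Reedy framework, and existence is then quoted. However, the compatibility condition you treat as the main obstacle does not need to be checked by hand. In Shulman's formulation it is the property of $\A$ being almost c-Reedy (\cite[Definition~8.8]{c-Reedy}). \cite[Theorem~8.26]{c-Reedy} says every c-Reedy category has this property. The existence theorem that takes the projective stratum structures as input is \cite[Theorem~8.9]{c-Reedy}, not 8.26. The paper's proof consists of exactly these three citations. Your matching-side computation is correct; it is the one used later in Theorem~\ref{thm:proj-cof-suff-cond}.

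Your latching-side verification, on the other hand, is false as written.

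\textbf{The weights are not free in general.} The latching weights $\A(z,y)$ with $d(z)<n$ are usually not coproducts of representables. In the paper's own example with $\A(0,0)=C_2$ and $\A(0,1)=\{u\}$, the weight $\partial_1\A(-,1)$ is the one-point $C_2$-set with trivial action, not a free one.

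\textbf{$L_n$ does not preserve projective cofibrations.} Take instead $\A(0,0)=\{\id\}$, $\A(1,1)=C_2$ and $\A(0,1)=\{u\}$ with $gu=u$. Then $L_1X$ is $X$ with the trivial $C_2$-action. This object is not projectively cofibrant in $\mathcal{M}^{\A_{=1}}$, for instance for simplicial sets. So any reading of your ``Leibniz condition'' as ``$L_n$ sends cofibrations to projective cofibrations'' fails.

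What is true, and is all a by-hand gluing argument needs, is an objectwise statement. Fix $y$ of degree $n$ and view $A\mapsto (L_nA)_y$ as a functor on $\mathcal{M}^{\A_{<n}}$. Its right adjoint is $X\mapsto X^{\A(-,y)}$, which preserves objectwise (trivial) fibrations. Hence it is left Quillen from the projective structure to $\mathcal{M}$, and $L_n$ sends trivial cofibrations to objectwise trivial cofibrations. The weak equivalences of the projective structure on $\mathcal{M}^{\A_{=n}}$ are objectwise, so pushout and two-out-of-three then show that the relative latching maps of a cofibration which is a weak equivalence are trivial. This asymmetry is why Shulman's c-Reedy axioms impose a freeness condition (axiom~(5)) only on the inverse part, and that condition is vacuous here.
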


\begin{proof}
	By Theorem~\ref{thm:c-Reedy-triangular} and \cite[Theorem~8.26]{c-Reedy}, the small category \(\A\) is almost c-Reedy in the sense of \cite[Definition~8.8]{c-Reedy}. The proof is complete by \cite[Theorem~8.9]{c-Reedy}.
\end{proof}

\begin{lem}[{\cite[Lemma~3.14]{DirectedDegeneracy}}] \label{lem:wlim} 
	Let \(\C\) be a small category. Consider a small diagram \(X:\C\to \mathcal{M}\) and the empty weight \(W:\C\to \Set\) with \(W(c)=\varnothing\) for all \(c\in \C\). Then there is the isomorphism 
	\[
	\int_{c\in \C}X(c)^{W(c)} \cong \mathbf{1}.
	\]
\end{lem}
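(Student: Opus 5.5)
The statement to prove is Lemma~\ref{lem:wlim}: the weighted limit of any diagram \(X:\C\to\mathcal{M}\) for the empty weight is the terminal object \(\mathbf{1}\). I will argue directly from the universal property of the end, without using any property of \(\C\) or \(X\) beyond the existence of the limit. Recall that \(\int_{c\in\C}X(c)^{W(c)}\) is the end of the functor \(\C^\op\times\C\to\mathcal{M}\), \((c,c')\mapsto X(c')^{W(c)}\). Here \(A^S\) denotes the power of \(A\) by the set \(S\), namely the product \(\prod_{s\in S}A\).

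\textbf{Step 1.} For every \(c\), the power \(X(c)^{W(c)}=X(c)^{\varnothing}\) is an empty product, hence is the terminal object \(\mathbf{1}\) of \(\mathcal{M}\).

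\textbf{Step 2.} I will express the end as an equalizer:
\[
\int_{c}X(c)^{W(c)}\longrightarrow \prod_{c\in\Ob\C}X(c)^{W(c)}\rightrightarrows\prod_{f:c\to c'}X(c')^{W(c)}.
\]
Both products are products of copies of \(\mathbf{1}\), so each is \(\mathbf{1}\). The two parallel maps \(\mathbf{1}\to\mathbf{1}\) are therefore both the identity. The equalizer of a pair of equal maps is the common domain, so the end is \(\mathbf{1}\).

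\textbf{Alternative Step 2.} Equivalently, I can check the universal property directly. A wedge from an object \(A\) consists of maps \(A\to X(c)^{\varnothing}=\mathbf{1}\), one for each \(c\). Such a family is unique, and it satisfies the wedge condition automatically because any two maps into \(\mathbf{1}\) coincide. Hence \(\mathbf{1}\) is the universal wedge.

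The only point that needs care is the convention that the power by \(\varnothing\) is the empty product, that is, the terminal object. This is forced by the adjunction \(\mathcal{M}(A,B^S)\cong\Set(S,\mathcal{M}(A,B))\), whose right-hand side is a singleton when \(S=\varnothing\). Nothing else is expected to be an obstacle.
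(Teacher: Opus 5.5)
Your proof is correct and complete. Since every power \(X(c)^{\varnothing}\) is an empty product, both products in the equalizer presentation of the end are terminal. The two parallel arrows between them therefore coincide, and the equalizer is \(\mathbf{1}\). The wedge formulation in your Alternative Step~2 is equally valid.

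The paper gives no argument of its own for this lemma; it only cites \cite[Lemma~3.14]{DirectedDegeneracy}. Your computation therefore serves as a self-contained justification rather than a competing route.

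A slightly more compact way to say the same thing uses the defining universal property of the weighted limit: \(\mathcal{M}\bigl(Z,\int_{c\in\C}X(c)^{W(c)}\bigr)\cong \Set^{\C}\bigl(W,\mathcal{M}(Z,X(-))\bigr)\). The right-hand side is a singleton for every object \(Z\), because the empty functor \(W\) is initial in \(\Set^{\C}\). This is just your wedge argument in Yoneda form.

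Your argument also does not need completeness of \(\mathcal{M}\). The terminal object is a product of any family of terminal objects, so all the limits you invoke exist automatically.
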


\begin{lem}[{\cite[Lemma~3.15]{DirectedDegeneracy}}] \label{lem:wcolim} 
	Let \(\C\) be a small category. Consider a small diagram \(X:\C\to \mathcal{M}\) and a weight \(U:\C^{\op}\to \Set\). Let \(\mathcal D\) be the full subcategory of \(\C\) generated by the objects \(c\) such that \(U(c)\neq \varnothing\). Then there is the isomorphism  
	\[
	\int^{c\in \mathcal D} U(c)\cdot X(c) \cong \int^{c\in \C} U(c)\cdot X(c).
	\]
\end{lem}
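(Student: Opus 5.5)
The plan is to prove the isomorphism directly from the universal property of the coend, by comparing cowedges on the two sides.

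\emph{Step 1: reduce to a statement about components.} Write \(j:\mathcal D\hookrightarrow \C\) for the full inclusion. A cowedge for the right-hand side is a family of maps
\[
\phi_c: U(c)\cdot X(c)\longrightarrow Y,\qquad c\in\C,
\]
that is dinatural in \(c\). A cowedge for the left-hand side is the same kind of family, indexed only by \(c\in\mathcal D\) and dinatural only along morphisms of \(\mathcal D\). The goal is to show that restriction along \(j\) is a bijection between these two kinds of cowedges. It is also natural in \(Y\). The Yoneda lemma then yields the isomorphism, and it is the canonical comparison map \(\int^{\mathcal D}\to\int^{\C}\).

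\emph{Step 2: the components outside \(\mathcal D\) carry no information.} If \(c\notin\mathcal D\), then \(U(c)=\varnothing\). Hence \(U(c)\cdot X(c)\) is the empty copower, which is the initial object of \(\mathcal M\). So \(\phi_c\) is forced to be the unique map out of the initial object. Consequently any cowedge over \(\mathcal D\) extends in exactly one way to a family indexed by all of \(\C\). This gives injectivity of restriction, and a candidate extension for surjectivity.

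\emph{Step 3: the extended family is dinatural.} Take \(f:c\to c'\) in \(\C\). The dinaturality square compares two maps out of \(U(c')\cdot X(c)\):
\[
U(c')\cdot X(c)\xrightarrow{\;U(f)\cdot X(c)\;}U(c)\cdot X(c)\xrightarrow{\;\phi_c\;}Y,
\qquad
U(c')\cdot X(c)\xrightarrow{\;U(c')\cdot X(f)\;}U(c')\cdot X(c')\xrightarrow{\;\phi_{c'}\;}Y.
\]
There are three cases.
\begin{enumerate}
\item If \(U(c')=\varnothing\), the common source \(U(c')\cdot X(c)\) is initial, so the two maps agree automatically.
\item If \(U(c')\neq\varnothing\), then \(U(f):U(c')\to U(c)\) is a map out of a nonempty set, so \(U(c)\neq\varnothing\) as well. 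Thus \(c\in\mathcal D\) and \(c'\in\mathcal D\).
\item In that situation \(f\) is a morphism of \(\mathcal D\), because \(\mathcal D\) is full. The square then commutes because the original family is a cowedge over \(\mathcal D\).
\end{enumerate}
Hence the extended family is a cowedge over \(\C\), which gives surjectivity.

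\emph{Expected difficulty.} The only step with content is the case analysis in Step 3. It uses that \(U\) is contravariant, so nonemptiness propagates from \(c'\) back to \(c\), together with the fullness of \(\mathcal D\). Everything else is formal, using only that the empty copower is initial.
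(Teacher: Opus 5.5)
Your proof is correct and complete. Since the empty copower is initial, the components at objects outside \(\mathcal D\) are forced. Your case analysis on whether \(U(c')\) is empty then shows that the forced extension is dinatural, using the two inputs that matter: contravariance of \(U\) carries nonemptiness from \(c'\) back to \(c\), and \(\mathcal D\) is full.

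The paper gives no argument of its own here; it imports the statement from \cite[Lemma~3.15]{DirectedDegeneracy}, so there is no proof in the text to compare against.

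An equally short alternative uses the standard formula \(\int^{c\in\C} U(c)\cdot X(c)\cong\varinjlim_{(c,u)\in\mathrm{el}(U)} X(c)\), a colimit over the category of elements of \(U\). Every object \((c,u)\) of \(\mathrm{el}(U)\) has \(c\in\mathcal D\). By fullness, every morphism of \(\mathrm{el}(U)\) lies over a morphism of \(\mathcal D\). So the inclusion \(\mathrm{el}(U|_{\mathcal D})\to\mathrm{el}(U)\) is an isomorphism of categories, and the two colimits coincide.

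That route compresses your Step~3 into one observation: a morphism \(f:c\to c'\) with \(U(c')=\varnothing\) contributes nothing to \(\mathrm{el}(U)\). The cost is invoking the category-of-elements formula. Your version works directly from the universal property of the coend and uses nothing about \(\mathcal M\) beyond the fact that the empty copower is initial.
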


 Recall that the \textit{projective model structure} is the unique model structure (if it exists) on a functor category \(\mathcal{M}^\C\) such that the weak equivalences and the fibrations are the objectwise ones.

\begin{thm} \label{thm:proj-cof-suff-cond}
The projective model structure on \[\mathcal{M}^{\A}\] exists and coincides with the c-Reedy model structure. Let \(A\in \Ob(\mathcal{M}^{\A})\). Then \(A\) is projective cofibrant if and only if the map \[\wh A(\partial\A[-]) \longrightarrow \wh A(\A[-])\] is a projective cofibration of \(\mathcal{M}^{\A_{=n}}\) for every \(n\geq 0\).
\end{thm}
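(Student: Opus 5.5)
Since both model structures share the objectwise weak equivalences, the plan is to show that the c-Reedy fibrations of Theorem~\ref{thm:c-Reedy-model} are exactly the objectwise fibrations (and likewise for trivial fibrations). Once this holds, the c-Reedy model structure has the weak equivalences and fibrations required of the projective model structure. Uniqueness of a model structure with prescribed weak equivalences and fibrations then gives existence of the projective model structure and its equality with the c-Reedy one.

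To identify the fibrations, I compute the matching objects. By Proposition~\ref{prop:triangularity-coend}(1), for \(y\) of degree \(n\) we have \(n\leq d(y)\), so \(\partial_n\A(y,z)=\varnothing\) for every \(z\). Lemma~\ref{lem:wlim} then gives \((M_nA)_y\cong \mathbf 1\), and the same holds for \(B\). The relative matching map \(A(y)\to (M_nA)_y\times_{(M_nB)_y}B(y)\) is therefore just \(A(y)\to B(y)\). Every object \(y\) has degree \(n=d(y)\) for exactly one \(n\), so the c-Reedy fibration condition is precisely the objectwise one. The same argument applies to trivial fibrations.

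For the cofibrancy criterion, I take the map \(\varnothing\to A\) in the cofibration clause of Theorem~\ref{thm:c-Reedy-model}. Since \(L_n\varnothing\) is initial, \(A\) is cofibrant iff \(L_nA\to A|_{\A_{=n}}\) is a projective cofibration of \(\mathcal{M}^{\A_{=n}}\) for every \(n\). It remains to identify \((L_nA)_y\) with \(\wh A(\partial\A[y])\) and \(A(y)\) with \(\wh A(\A[y])\) for \(d(y)=n\). The second is the co-Yoneda lemma. For the first, Fubini for coends gives
\[
\wh A(\partial\A[y])=\int^{z\in\A}\Bigl(\int^{x\in\A_{<n}}\A(x,y)\times\A(z,x)\Bigr)\cdot A(z)=\int^{z\in\A}\partial_n\A(z,y)\cdot A(z)=(L_nA)_y,
\]
using that \(\wh A\) preserves colimits and \(\wh A(\A[x])\cong A(x)\). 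Alternatively, I can compute directly with Proposition~\ref{prop:lower-degree-part} and Lemma~\ref{lem:wcolim}, restricting to \(z\) with \(d(z)<n\). Finally I check that these isomorphisms are natural in \(y\in\A_{=n}\) and compatible with the maps induced by \(\partial\A[y]\to\A[y]\) and by \(\gamma\).

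The main obstacle is this last bookkeeping step. It requires verifying that the latching map \(L_nA\to A\) defined in \cite{c-Reedy} agrees, under the identifications above, with \(\wh A\) applied to the inclusion \(\partial\A[-]\to\A[-]\) as a map of functors on \(\A_{=n}\). It also relies on the abstract uniqueness statement that a model structure is determined by its weak equivalences and fibrations, which holds because the cofibrations are then forced by the lifting property.
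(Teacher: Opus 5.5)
Your proposal is correct and follows essentially the same route as the paper. In both, the matching objects are trivial by Proposition~\ref{prop:triangularity-coend}(1) and Lemma~\ref{lem:wlim}, so c-Reedy fibrations are objectwise. The latching object $(L_nA)_y$ is then identified with $\wh A(\partial\A[y])$, and applying the c-Reedy cofibration condition to $\varnothing\to A$ yields the criterion.

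Your main identification is in fact immediate from the definitions, since $\partial\A[y](z)=\partial_{d(y)}\A(z,y)$ pointwise. The paper instead passes through Lemma~\ref{lem:wcolim} and Proposition~\ref{prop:lower-degree-part}, which is the alternative you mention.
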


\begin{proof}
We mimic the proof of \cite[Theorem~5.17]{DirectedDegeneracy}. The matching object functor \[M_n:\mathcal{M}^{\A}\to \mathcal{M}^{\A_{=n}}\] for all \(n\geq 0\) can be calculated as follows. There is the sequence of isomorphisms of \(\mathcal{M}\)
\[
(M_nA)_{y}\cong \int_{z\in \A} A(z)^{\partial_n\A(y,z)} \cong \int_{z\in \A} A(z)^{\varnothing} \cong \mathbf{1},
\] 
the first isomorphism by definition of the matching object functor (Notation~\ref{notation:matching-latching-object}), the second isomorphism since \(\partial_n\A(y,z)=\varnothing\) by Proposition~\ref{prop:triangularity-coend}, and the third isomorphism by Lemma~\ref{lem:wlim}. Thus, the c-Reedy model structure of Theorem~\ref{thm:c-Reedy-model} on \(\mathcal{M}^{\A}\) coincides with the projective model structure which therefore exists. There is the sequence of isomorphisms of \(\mathcal{M}\)
\[
(L_nA)_{y} \cong \int^{z\in \A}\partial_n\A(z,y) \cdot A(z) \cong \int^{z\in \A_{<n}}\A(z,y) \cdot A(z) \cong \widehat{A}(\partial\A[y]),
\]
the first isomorphism by definition of the latching object functor (Notation~\ref{notation:matching-latching-object}), the second isomorphism by Lemma~\ref{lem:wcolim} and since \(\partial_n\A(z,y)=\varnothing\) for \(d(z)\geq n\) by Proposition~\ref{prop:triangularity-coend}, and finally the third isomorphism by Proposition~\ref{prop:lower-degree-part} and since \(\widehat{A}\) is colimit-preserving. By Theorem~\ref{thm:c-Reedy-model}, \(A\) is projective cofibrant if and only if for all \(n\geq 0\), the map \(L_nA\to A\) is a projective cofibration of the projective model structure of \(\mathcal{M}^{\A_{=n}}\). Since \(A(x) = \widehat{A}(\A[x])\) by definition of \(\widehat{A}\), the proof is complete. 
\end{proof}

\section{Realizations in a model category}
\label{sec:realization}

\begin{definition} \label{def:realization}
	Let \(I\) be an object of \(\mathcal M^\A\). Let \(A\) be an object of \(\mathcal{M}^\A\). The functor \[\wh A:\A^\op\Set\longrightarrow \mathcal M\] is a \textit{realization functor over \(I\)} if the following conditions hold:
	\begin{enumerate}
		\item For all \(x\in\Ob\A\), the map \(\wh A(\partial\A[x])\to \wh A(\A[x])\) is a cofibration of \(\mathcal M\). 
		\item There is a natural transformation 
		\[
		\begin{tikzcd}[cramped]
			A\arrow[twoheadrightarrow,r,"\simeq"] & I
		\end{tikzcd}
		\]
		which is an objectwise trivial fibration of \(\mathcal M\) (i.e. a projective trivial fibration).
	\end{enumerate}
\end{definition}

\begin{definition}
	An object \(K\) of \(\A^\op\Set\) is \textit{cellular} if the canonical map \(\varnothing \to K\) is a  \textit{cellular map}, i.e. a transfinite composition of pushouts of the generating cofibrations \[\partial\A(-,x)\subseteq \A(-,x)\] for \(x\) running over \(\Ob(\A)\). An object \(K\) of \(\A^\op\Set\) is \textit{cofibrant} if it is a retract of a cellular object.
\end{definition}

Let \(x\) be an object of \(\A\). We claim that \(\partial\A(-,x)\) is cofibrant if and only if \(\A(-,x)\) is cofibrant. The forward implication follows immediately from the definition, since \(\A(-,x)\) is obtained from \(\partial\A(-,x)\) by attaching one \(x\)-cell along the generating cofibration
\[
\partial\A(-,x)\subseteq \A(-,x).
\]
Conversely, for every ordinal \(\lambda\), let \(\sk_{<\lambda}\) denote the truncation functor on presheaves defined by
\[
\bigl(\sk_{<\lambda}K\bigr)(y)=
\begin{cases}
	K(y) & \text{if } d(y)<\lambda,\\
	\varnothing & \text{if } d(y)\geq \lambda .
\end{cases}
\]
This is indeed a subpresheaf because \(\A\) is c-direct. Moreover \(\sk_{<\lambda}\) preserves cofibrant (and even cellular) presheaves: it preserves colimits and retracts, and it sends every generating cofibration
\[
\partial\A(-,z)\subseteq \A(-,z)
\]
either to itself, when \(d(z)<\lambda\), or to an identity map, when \(d(z)\geq\lambda\). Taking \(\lambda=d(x)\), one has
\[
\sk_{<d(x)}\A(-,x)
=
\partial\A(-,x).
\]
Therefore, if \(\A(-,x)\) is cofibrant, then \(\partial\A(-,x)\) is cofibrant as well.

Here is an example showing that the representables need not be cofibrant for an arbitrary c-direct category. Let \(G=C_{2}\), and let \(\A\) be the small category with two objects \(0\) and \(1\), with \(d(0)=0\) and \(d(1)=1\), defined by
\[
\A(0,0)=G,\qquad
\A(1,1)=\{\id_{1}\},\qquad
\A(0,1)=\{u\},\qquad
\A(1,0)=\varnothing ,
\]
where the only nontrivial composition is given by
\[
u\circ g=u
\qquad
(g\in G).
\]
Then \(\A\) is c-direct. We claim that the representable \(\A(-,1)\) is not cofibrant. Indeed, if \(K\) is cofibrant, then \(K(0)\) is a free right \(G\)-set. This follows because attaching a \(0\)-cell adds a copy of the regular right \(G\)-set \(\A(0,0)=G\), whereas attaching a \(1\)-cell does not change the value at \(0\), since
\[
\partial\A(0,1)=\A(0,1)=\{u\}.
\]
Thus, by transfinite induction, the value at \(0\) of any cellular presheaf is a free right \(G\)-set, and the same remains true for retracts, since an equivariant retract of a free \(G\)-set has trivial stabilizers. However
\[
\A(-,1)(0)=\A(0,1)=\{u\}
\]
is the singleton right \(G\)-set with the trivial action \(u\cdot g=u\). Since \(G=C_{2}\) is nontrivial, this right \(G\)-set is not free. Hence \(\A(-,1)\) is not cofibrant.

\begin{definition}
	The c-direct category \(\A\) has \textit{cofibrant representables} if for all \(x\in \Ob(\A)\), the representable \(\A(-,x)\) is cofibrant.
\end{definition}

\begin{thm} \label{thm:comparison-realization-1}
	Assume that \(\A\) has cofibrant representables. Let \[\wh A \quad\hbox{and}\quad\wh B\] be two realization functors over \(I\) such that there exists a commutative diagram of \(\mathcal M^\A\) 
	\[
	\begin{tikzcd}[column sep=7em,row sep=3em]
		A \arrow[r,"\mu"] \arrow[d,twoheadrightarrow,"\simeq"]& B \arrow[d,twoheadrightarrow,"\simeq"]\\
		I \arrow[r,equal] & I
	\end{tikzcd}
	\]
	Then, for every \(\A\)-set \(K\), the morphism \(A\to B\) gives rise to a natural map of \(\mathcal{M}\)
	\[
	\wh\mu_K:\wh{A}(K) \longrightarrow \wh{B}(K)
	\]
	which is a weak equivalence of \(\mathcal{M}\) between cofibrant objects whenever \(K\) is cofibrant.
\end{thm}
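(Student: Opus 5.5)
The plan is to reduce everything to the case of representables and then run a cellular induction, using the projective model structure on \(\mathcal M^\A\) from Theorem~\ref{thm:proj-cof-suff-cond}. The natural map \(\wh\mu_K\) is simply the map of coends induced by \(\mu\):
\[
\int^{x\in\A} K(x)\cdot A(x)\longrightarrow \int^{x\in\A} K(x)\cdot B(x).
\]
It is natural in \(K\) and commutes with colimits in \(K\).

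First, I will show that \(A\) and \(B\) are projective cofibrant. By condition (1) of Definition~\ref{def:realization}, each map \(\wh A(\partial\A[x])\to\wh A(\A[x])\) is a cofibration of \(\mathcal M\). However, Theorem~\ref{thm:proj-cof-suff-cond} asks for a projective cofibration in \(\mathcal M^{\A_{=n}}\), which is stronger, so this route alone is not enough. Instead I will use the hypothesis of cofibrant representables directly. The functor \(\wh A\) preserves colimits, so it sends pushouts and transfinite compositions of the generating maps \(\partial\A[x]\subseteq\A[x]\) to pushouts and transfinite compositions of cofibrations of \(\mathcal M\), and it preserves retracts. Hence \(\wh A\) is left Quillen-like on cellular maps: it sends every cofibration between cofibrant presheaves, in particular every cellular map, to a cofibration of \(\mathcal M\). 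It follows that \(\wh A(K)\) and \(\wh B(K)\) are cofibrant whenever \(K\) is cofibrant, because \(\varnothing\to K\) is a retract of a cellular map and \(\wh A(\varnothing)\) is initial.

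Second, I will prove that \(\wh\mu_K\) is a weak equivalence by cellular induction on \(K\). For a representable \(K=\A[x]\), the map \(\wh\mu_K\) is \(\mu_x:A(x)\to B(x)\). This map is a weak equivalence by two-out-of-three, since both \(A(x)\to I(x)\) and \(B(x)\to I(x)\) are weak equivalences and the square commutes.

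For a boundary \(\partial\A[x]\), I will argue by transfinite induction on the degree \(d(x)\). By cofibrant representables and the remark preceding the theorem, \(\partial\A[x]\) is cofibrant and lives in degrees \(<d(x)\). It is therefore a retract of a cellular presheaf built from cells of degree \(<d(x)\); to get this, I apply \(\sk_{<d(x)}\) to a cellular presentation.

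For a pushout \(L=K\sqcup_{\partial\A[y]}\A[y]\) along a generating cofibration, with \(K\) cofibrant, I apply \(\wh A\) and \(\wh B\) to get two pushout squares along cofibrations between cofibrant objects. The cube lemma (Reedy/Hovey's gluing lemma) then shows that if \(\wh\mu\) is a weak equivalence on \(K\), on \(\partial\A[y]\) and on \(\A[y]\), it is one on \(L\).

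Transfinite compositions of cofibrations between cofibrant objects are handled by the standard fact that a colimit of a weak equivalence between cofibrant \(\lambda\)-sequences in the Reedy/projective model structure on sequences is a weak equivalence. Retracts are immediate.

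The main obstacle is the bookkeeping in this induction: the gluing lemma needs \(\wh\mu\) already known to be a weak equivalence on \(\partial\A[y]\), which is itself a colimit. I will organize the argument as a transfinite induction on the degree \(\delta\), proving simultaneously for all cofibrant \(K\) concentrated in degrees \(<\delta\) that \(\wh\mu_K\) is a weak equivalence. Cofibrant representables ensure that each \(\partial\A[y]\) with \(d(y)=\delta\) falls under the induction hypothesis. For an arbitrary cellular \(K\), I then reorder the cell attachments by degree, or more simply pass through the skeleta \(\sk_{<\lambda}K\), which are cofibrant.
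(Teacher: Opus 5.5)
Your proposal is correct and follows essentially the same route as the paper: cofibrancy of \(\wh A(K)\) from condition (1) of Definition~\ref{def:realization}, two-out-of-three on representables, and a transfinite induction along the skeleta \(\sk_{<\lambda}K\). That induction uses the cube lemma at successor stages and Reedy-cofibrant towers at limit stages, with cofibrant representables guaranteeing that \(\partial\A[y]=\sk_{<d(y)}\A[y]\) is cofibrant and covered by the induction hypothesis. Your inductive statement over cofibrant presheaves concentrated in degrees \(<\delta\), with retracts handled explicitly, is slightly more careful than the paper's, and the opening detour through projective cofibrancy of \(A\) is unnecessary, as you yourself note.
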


\begin{proof}
	First of all, note that for all cofibrant \(\A\)-sets \(K\), \(\wh A(K)\) and \(\wh B(K)\) are cofibrant in \(\mathcal M\). Assume that \(K\) is cellular. Since \(\sk_{<\lambda}\) preserves cellularity, we have a transfinite sequence of cellular maps \[\sk_{<\lambda}(K)\longrightarrow \sk_{<\lambda+1}(K),\] and thus for each ordinal \(\lambda\) a pushout diagram of \(\A\)-sets   
	\[
	\begin{tikzcd}[column sep=3em,row sep=3em]
		\displaystyle\coprod_{x\in \C_\lambda(K)}\partial\A(-,x) \arrow[r] \arrow[d] & \sk_{<\lambda}(K) \arrow[d] \\
		\displaystyle\coprod_{x\in \C_\lambda(K)}\A(-,x) \arrow[r] & \cocartesian \sk_{<\lambda+1}(K)
	\end{tikzcd}
	\]
	with \(\C_\lambda(K)\subseteq \Ob(\A_{=\lambda})\) and such that the comparison map 
	\[
	\varinjlim_{\mu<\lambda} \sk_{<\mu}(K) \xlongrightarrow{\cong} \sk_{<\lambda}(K)
	\]
	is an isomorphism for all limit ordinal \(\lambda\), and finally such that the comparison map
	\[
	\varinjlim_{\lambda} \sk_{<\lambda}(K) \xlongrightarrow{\cong} K
	\]
	is an isomorphism. We obtain the commutative cube of \(\mathcal M\)
	\[
	\begin{tikzcd}[column sep=1.5em,row sep=2em]
		\displaystyle\coprod_{x\in \C_\lambda(K)}\wh A(\partial\A[x]) \arrow[dd,rightarrowtail]\arrow[rd,"\simeq"]\arrow[rr] && \wh A(\sk_{<\lambda}(K)) \arrow[dd,rightarrowtail] \arrow[rd,"\simeq"]\\
		& \displaystyle\coprod_{x\in \C_\lambda(K)}\wh B(\partial\A[x]) \arrow[rr,crossing over] & & \wh B(\sk_{<\lambda}(K)) \arrow[dd,rightarrowtail]\\
		\displaystyle\coprod_{x\in \C_\lambda(K)}\wh A(\A[x]) \arrow[rd,"\simeq"]\arrow[rr] && \cocartesian\wh A(\sk_{<\lambda+1}(K)) \arrow[rd,dashed,shorten >=0.6em] \\
		& \displaystyle\coprod_{x\in \C_\lambda(K)}\wh B(\A[x]) \arrow[rr] \arrow[uu,leftarrowtail,crossing over] && \cocartesian\wh B(\sk_{<\lambda+1}(K))
	\end{tikzcd}
	\]
	We consider the following statement: the map  
	\[
	\wh A(\sk_{<\lambda}(L)) \longrightarrow \wh B(\sk_{<\lambda}(L))
	\]
	is a weak equivalence for \(\lambda\geq 0\) for all \(\A\)-sets \(L\). We proceed by transfinite induction on \(\lambda\geq 0\). For \(\lambda=0\), there is nothing to prove. We assume the statement for \(\lambda\geq 0\). For all \(x\in \C_\lambda(K)\), the commutative diagram 
	\[
	\begin{tikzcd}[column sep=7em,row sep=3em]
		A(x) \arrow[r,"\mu"] \arrow[d,twoheadrightarrow,"\simeq"]& B(x) \arrow[d,twoheadrightarrow,"\simeq"]\\
		I(x) \arrow[r,equal] & I(x)
	\end{tikzcd}
	\]
	implies the weak equivalence \[\wh A(\A(-,x))=A(x) \simeq B(x) = \wh B(\A(-,x))\] by the two-out-of-three property. By definition of \(\partial\A(-,x)\), one has 
	\[
	\sk_{<\lambda}(\partial\A(-,x)) = \partial\A(-,x)\quad\hbox{for all}\quad x\in \C_\lambda(K).
	\]
	Thus the induction hypothesis implies the weak equivalence \[\wh A(\partial\A(-,x)) \simeq \wh B(\partial\A(-,x))\] for \(x\in \C_\lambda(K)\). We deduce the same fact for the ordinal \(\lambda+1\) using the commutative cube above and using the cube lemma (\cite[Proposition~15.10.10]{ref_model2} or \cite[Lemma~5.2.6]{MR99h:55031}). It remains the case where \(\lambda\) is a limit ordinal. We have two towers of cofibrations between cofibrant objects of \(\mathcal M\) which are objectwise weakly equivalent. The colimit coincides with the homotopy colimit since the two towers are Reedy cofibrant for the Reedy model structure of \(\mathcal M^\lambda\). This proves the limit ordinal case and completes the proof.
\end{proof}

Recall that the \textit{injective model structure} is the unique model structure (if it exists) on a functor category \(\mathcal{M}^\C\) such that the weak equivalences and the cofibrations are the objectwise ones.

\begin{lem}[well known; e.g. {\cite[Lemma~6.10]{DirectedDegeneracy}}] \label{lem:proj-inj}
	Let \(\C\) be a small category. Let \(\mathcal{M}\) be a model category (not necessarily cofibrantly generated) such that both the projective model structure \((\mathcal{M}^\C)_{proj}\) and the injective model structure \((\mathcal{M}^\C)_{inj}\) exist. Then the identity of \(\mathcal{M}\) yields a left Quillen functor \[(\mathcal{M}^\C)_{proj}\longrightarrow (\mathcal{M}^\C)_{inj}.\] In particular, every projective cofibration is an injective cofibration.
\end{lem}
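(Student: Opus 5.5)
The identity functor of \(\mathcal{M}^\C\) is a right Quillen functor from the injective structure to the projective one: it preserves fibrations and trivial fibrations in that direction. Passing to the adjoint then gives the left Quillen statement. Both model structures have the same weak equivalences, the objectwise ones. So it is enough to compare fibrations, or equivalently cofibrations via lifting properties.

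Concretely, I would first show that every projective cofibration is an injective cofibration, i.e. an objectwise cofibration. Let \(i:A\to B\) be a projective cofibration and fix \(c\in\Ob(\C)\). The evaluation functor \(\mathrm{ev}_c:\mathcal{M}^\C\to\mathcal{M}\) has a right adjoint \(R_c\) given by the right Kan extension \((R_cX)(c')=X^{\C(c',c)}\), a product of copies of \(X\). This exists because \(\mathcal{M}\) is complete and \(\C\) is small. If \(p:X\to Y\) is a trivial fibration of \(\mathcal{M}\), then \(R_cp\) is objectwise a product of trivial fibrations. It is therefore an objectwise trivial fibration, i.e. a projective trivial fibration.

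By adjunction, \(\mathrm{ev}_c(i)=i_c\) has the left lifting property with respect to every trivial fibration of \(\mathcal{M}\), so \(i_c\) is a cofibration. The same argument with \(p\) a fibration shows that projective trivial cofibrations are objectwise trivial cofibrations. This step does not need \(\mathcal{M}\) to be cofibrantly generated, only that the projective structure exists and that its fibrations and weak equivalences are objectwise. So the identity \((\mathcal{M}^\C)_{proj}\to(\mathcal{M}^\C)_{inj}\) preserves cofibrations and trivial cofibrations, hence is left Quillen, with the identity as right adjoint.

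There is no real obstacle. The only point needing care is the use of the right adjoint \(R_c\) to avoid any generating-set description, since \(\mathcal{M}\) is not assumed cofibrantly generated. One also has to check that products of (trivial) fibrations are (trivial) fibrations, which is standard because these classes are characterized by a right lifting property.
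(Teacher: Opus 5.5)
Your proof is correct. The paper gives no argument of its own for this lemma: it is quoted as well known, with a pointer to \cite[Lemma~6.10]{DirectedDegeneracy}. So the comparison is with the standard argument rather than with a proof in the text.

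Your key step is sound. A projective cofibration $i$ lifts against $R_c p$, which is objectwise the product $p^{\C(c',c)}$. That product is a projective (trivial) fibration whenever $p$ is a (trivial) fibration of $\mathcal{M}$, so by adjunction $i_c$ is a (trivial) cofibration. This uses only the existence of the projective structure and the completeness of $\mathcal{M}$, which is why no cofibrant generation is needed.

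The more usual presentation is the dual one, which your first paragraph announces but your concrete argument does not carry out. The left adjoint $X\mapsto \C(c,-)\cdot X$ of $\mathrm{ev}_c$ sends (trivial) cofibrations of $\mathcal{M}$ to objectwise (trivial) cofibrations, because these classes are closed under coproducts. Hence injective (trivial) fibrations are objectwise (trivial) fibrations, and the identity $(\mathcal{M}^\C)_{inj}\to(\mathcal{M}^\C)_{proj}$ is right Quillen. The two routes are equivalent, since a Quillen adjunction can be checked on either side, so the mismatch between your roadmap and your argument is harmless.

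Your route has a small advantage: the ``in particular'' clause (projective cofibrations are objectwise cofibrations) holds even without knowing that the injective structure exists. Also, the trivial-cofibration half of your argument is redundant. Both structures have the objectwise weak equivalences, so once projective cofibrations are known to be objectwise cofibrations, projective trivial cofibrations are automatically objectwise trivial cofibrations.
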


\begin{thm} \label{thm:comparison-realization-2}
	Assume that \(\A\) has cofibrant representables. Let \[\wh A \quad\hbox{and}\quad\wh B\] be two realization functors over \(I\). Then there exists a realization functor \(\wh C\) over \(I\) and a commutative diagram of \(\mathcal M^\A\) 
	\[
	\begin{tikzcd}[column sep=6em,row sep=3em]
		A \arrow[d,twoheadrightarrow,"\simeq"] &\arrow[l,"\mu"']C \arrow[r,"\nu"] \arrow[d,twoheadrightarrow,"\simeq"]& B \arrow[d,twoheadrightarrow,"\simeq"]\\
		I& I \arrow[l,equal]\arrow[r,equal] & I
	\end{tikzcd}
	\]
	giving rise to two natural maps of \(\mathcal{M}\)
	\[
	\begin{tikzcd}[column sep=large]
		{\wh{A}(K)}
		&
		{\wh{C}(K)}
		\arrow[l, "{\wh\mu_K}"']
		\arrow[r, "{\wh\nu_K}"]
		&
		{\wh{B}(K)}
	\end{tikzcd}
	\]
	and, for every cofibrant \(\A\)-set \(K\), these two maps are weak equivalences of \(\mathcal{M}\) between cofibrant objects.
\end{thm}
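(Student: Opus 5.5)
The plan is to construct \(C\) as a cofibrant replacement of \(A\times_I B\) in the projective model structure of \(\mathcal{M}^{\A}\). Theorem~\ref{thm:proj-cof-suff-cond} guarantees that this model structure exists. Once \(C\) is in hand, Theorem~\ref{thm:comparison-realization-1} applies twice, once to the pair \((C,A)\) and once to the pair \((C,B)\).

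\textbf{Step 1: the fibre product.} Form the pullback \(P=A\times_I B\) in \(\mathcal{M}^{\A}\); limits are computed objectwise. The projection \(P\to B\) is a base change of the objectwise trivial fibration \(A\to I\), so it is itself an objectwise trivial fibration. Composing with \(B\to I\), the map \(P\to I\) is an objectwise trivial fibration too, and likewise \(P\to A\).

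\textbf{Step 2: cofibrant replacement.} Factor \(\varnothing\to P\) in the projective model structure as a projective cofibration \(\varnothing\to C\) followed by a projective trivial fibration \(C\to P\), that is, an objectwise trivial fibration. Set \(\mu\colon C\to P\to A\) and \(\nu\colon C\to P\to B\). Then \(C\to P\to I\) is a composite of objectwise trivial fibrations, and both squares in the required diagram commute by construction. This gives condition (2) of Definition~\ref{def:realization} for \(\wh C\).

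\textbf{Step 3: condition (1) for \(\wh C\).} Since \(C\) is projective cofibrant, Theorem~\ref{thm:proj-cof-suff-cond} shows that for each \(n\) the map \(\wh C(\partial\A[-])\to \wh C(\A[-])\) is a projective cofibration of \(\mathcal{M}^{\A_{=n}}\). We must deduce that each component \(\wh C(\partial\A[x])\to\wh C(\A[x])\) is a cofibration of \(\mathcal{M}\).

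\begin{itemize}
\item First try Lemma~\ref{lem:proj-inj}: a projective cofibration is an injective, i.e.\ objectwise, cofibration. This requires the injective model structure on \(\mathcal{M}^{\A_{=n}}\), which exists for instance when \(\mathcal{M}\) is combinatorial.
\item Alternatively, use a direct argument. Evaluation at \(x\) has a right adjoint given by right Kan extension, \(Y\mapsto \prod_{\A_{=n}(-,x)} Y\) componentwise. This right adjoint visibly preserves fibrations and trivial fibrations, because these are objectwise and stable under products. Hence evaluation at \(x\) is left Quillen from the projective structure and preserves cofibrations.
\end{itemize}

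I will use the second argument, since it needs no extra hypothesis on \(\mathcal{M}\). With condition (1) established, \(\wh C\) is a realization functor over \(I\).

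\textbf{Step 4: conclusion.} Apply Theorem~\ref{thm:comparison-realization-1} to the commutative square formed by \(\mu\colon C\to A\) over \(I\). This shows \(\wh\mu_K\) is a weak equivalence between cofibrant objects for every cofibrant \(K\). The same theorem applied to \(\nu\colon C\to B\) gives the corresponding statement for \(\wh\nu_K\).

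The only delicate step is Step 3, namely checking that evaluation is left Quillen, or equivalently that projective cofibrations are objectwise cofibrations. Everything else is formal.
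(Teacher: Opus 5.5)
Your proof is correct, and its skeleton matches the paper's. You build a projectively cofibrant \(C\) with an objectwise trivial fibration to \(I\) that factors through both \(A\) and \(B\), obtain condition~(1) from Theorem~\ref{thm:proj-cof-suff-cond}, and apply Theorem~\ref{thm:comparison-realization-1} to \(\mu\) and to \(\nu\).

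The differences are two local choices.

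\textbf{Construction of \(C\).} The paper takes \(C\) to be a cofibrant replacement of \(A\) alone. It then obtains \(\nu\colon C\to B\) by lifting \(C\to A\to I\) against the trivial fibration \(B\to I\), using that \(C\) is cofibrant. You instead take a cofibrant replacement of the fibre product \(A\times_I B\). This is symmetric in \(A\) and \(B\) and needs no lifting step; otherwise the two constructions are interchangeable.

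\textbf{From projective to objectwise cofibrations.} This difference is more substantive. The paper cites Lemma~\ref{lem:proj-inj}, whose hypotheses include the existence of the injective model structure on \(\mathcal M^{\A_{=n}}\). The paper's standing assumption on \(\mathcal M\) only posits the projective structures, although the injective one does exist in the combinatorial examples it mentions. You note instead that evaluation at \(x\) has right adjoint \(Y\mapsto \prod_{\A_{=n}(-,x)}Y\), which preserves objectwise fibrations and trivial fibrations. Hence evaluation is left Quillen for the projective structure, with no extra hypothesis. Your version therefore holds in exactly the generality the paper states.
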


\begin{proof}
	Consider the diagram of \(\mathcal M^\A\)
	\[
	\begin{tikzcd}[column sep=4em,row sep=4em]
		C \arrow[d,"\simeq"',twoheadrightarrow]\arrow[r,dashed,"\ell"] & B \arrow[d,twoheadrightarrow,"\simeq"]\\
		A \arrow[r,twoheadrightarrow,"\simeq"] & I
	\end{tikzcd}
	\]
	where \(C\) is a cofibrant replacement of \(A\) in the projective model structure of \(\mathcal M^\A\). The lift \(\ell\) exists since \(C\) is cofibrant and since the map \(B\to I\) is a trivial projective fibration. By Theorem~\ref{thm:proj-cof-suff-cond}, the map \[\wh C(\partial\A[-]) \longrightarrow \wh C(\A[-])\] is a projective cofibration of \(\mathcal{M}^{\A_{=n}}\). By Lemma~\ref{lem:proj-inj}, the map \[\wh C(\partial\A(-,x)) \longrightarrow \wh C(\A(-,x))\] is therefore a cofibration of \(\mathcal{M}\) for all \(x\in \Ob (\mathcal{M}^{\A_{=n}})\). Thus \(\wh C\) is a realization functor over \(I\). The proof is complete by Theorem~\ref{thm:comparison-realization-1}. 	
\end{proof}

\section{Thick categories of cubes}
\label{sec:thickness}

Let
\[
[0]=\{()\},
\qquad
[n]=\{0<1\}^n \quad (n\geq 1),
\]
equipped with the product order.  We write
\[
0_n=(0,\ldots,0),
\qquad
1_n=(1,\ldots,1).
\]
Let \(\PoSet^+\) denote the category of posets and strictly increasing maps.
For vertices \(x=(x_1,\ldots,x_n)\) and \(y=(y_1,\ldots,y_n)\) of \([n]\), put
\[
\dd(x,y)=
\begin{cases}
	\displaystyle\sum_{i=1}^n (y_i-x_i), & x\leq y,\\[5mm]
	+\infty, & \text{otherwise.}
\end{cases}
\]
Thus \(\dd(x,y)=1\) means that \(x<y\) and the two vertices differ in exactly one coordinate. The map \(\dd:[0,1]^n\times [0,1]^n \longrightarrow [0,+\infty]\) yields a Lawvere metric on the topological \(n\)-cube \([0,1]^n\) by \cite[Proposition~1.5]{DirectedDegeneracy}. For \(n\geq 1\), \(1\leq i\leq n\), and \(\alpha\in\{0,1\}\), the \emph{coface map}
\[
\delta_i^\alpha:[n-1]\longrightarrow[n]
\]
is defined by
\[
\delta_i^\alpha(x_1,\ldots,x_{n-1})
=
(x_1,\ldots,x_{i-1},\alpha,x_i,\ldots,x_{n-1}).
\]

\begin{definition}
	The \emph{box category} \(\Sq\) is the subcategory of \(\PoSet^+\) generated by the coface maps \(\delta_i^\alpha\). The presheaves on the box category are called \textit{precubical sets}.
\end{definition}

\begin{definition} \cite[Definition~2.1.5]{symcub}
	A strictly increasing map
	\[
	f:[m]\longrightarrow[n]
	\]
	is \emph{cotransverse} if it preserves adjacency in the following sense:
	\[
	\dd(x,y)=1
	\quad\Longrightarrow\quad
	\dd(f(x),f(y))=1
	\qquad (x,y\in[m]).
	\]
	The category \(\SqhatS\) has the cubes \([n]\), \(n\geq 0\), as objects and all cotransverse maps as morphisms:
	\[
	\SqhatS([m],[n])
	=
	\{\text{cotransverse maps }[m]\to[n]\}.
	\]
	The presheaves on \(\SqhatS\) are called \textit{(symmetric) transverse sets}.
\end{definition}

An immediate consequence of strict monotonicity is the following important fact:

\begin{proposition} \label{prop:cotransverse-endo-endpoint-preserving}
	For \(n\geq 0\), every endomorphism of \([n]\) preserves \(0_n\) and \(1_n\):
	\[
	\hbox{For all}\quad n\geq 0,\quad \theta\in \SqhatS([n],[n]) \quad \Longrightarrow \quad \theta(0_n)=0_n \quad \hbox{and} \quad \theta(1_n)=1_n\,.
	\]
\end{proposition}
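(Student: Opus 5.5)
The argument needs only that \(\theta\) is a strictly increasing self-map of the finite poset \([n]\); cotransversality plays no role. The case \(n=0\) is trivial, since \([0]\) has a single element, so assume \(n\geq 1\). The plan is to show that a strictly increasing self-map of a finite poset with a least (resp.\ greatest) element must fix it, by counting the lengths of chains.

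First, for \(x\in[n]\) let \(h(x)=\sum_i x_i\). This is the length of a maximal chain from \(0_n\) to \(x\): along any chain the function \(h\) increases by at least one at each strict step, and a chain of length \(h(x)\) exists. The key point is that \(x\) lies at the top of a chain \(0_n=c_0<c_1<\dots<c_{h(x)}=x\). Applying \(\theta\), strict monotonicity gives a strict chain \(\theta(c_0)<\dots<\theta(c_{h(x)})\), hence
\[
h(\theta(x))\geq h(\theta(0_n))+h(x)\geq h(x).
\]
Applying this to \(x=1_n\), we get \(h(\theta(0_n))+n\leq h(\theta(1_n))\leq n\). Therefore \(h(\theta(0_n))=0\), that is \(\theta(0_n)=0_n\), and \(h(\theta(1_n))=n\), that is \(\theta(1_n)=1_n\), because \(0_n\) and \(1_n\) are the unique vertices of height \(0\) and \(n\).

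There is no real obstacle. The only care needed is to justify the inequality \(h(\theta(c_{k+1}))\geq h(\theta(c_k))+1\), which follows from \(\theta(c_k)<\theta(c_{k+1})\) in the product order, since a strict inequality forces at least one coordinate to increase from \(0\) to \(1\).
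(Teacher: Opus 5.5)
Your proof is correct and is essentially the paper's argument. Both push a maximal chain $0_n<\dots<1_n$ through the strictly increasing map $\theta$ and use that the rank increases by at least one at each strict step, which forces $\theta(0_n)$ to have rank $0$ and $\theta(1_n)$ to have rank $n$; your remark that only strict monotonicity is used matches the paper's own framing.
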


\begin{proof}
Indeed, let
\[
0_n=x_0<x_1<\cdots <x_n=1_n
\]
be a maximal chain in \([n]\). Since \(\theta\) is strictly increasing, the sequence
\[
\theta(x_0)<\theta(x_1)<\cdots <\theta(x_n)
\]
is a strict chain of length \(n\) in \([n]\). If \(r(y)=\dd(0_n,y)\) denotes the rank of a vertex
\(y\), then
\[
r(\theta(x_i))\geq r(\theta(x_0))+i
\]
for every \(i\). Taking \(i=n\), we get
\[
n\geq r(\theta(x_n))\geq r(\theta(x_0))+n.
\]
Thus \(r(\theta(x_0))=0\), and therefore \(\theta(0_n)=0_n\). The same inequalities also force \(r(\theta(x_n))=n\), and therefore
\(\theta(1_n)=1_n\).
\end{proof}

\begin{definition} [{\cite[Definition~2.1.7 and 2.1.12]{symcub}}] \label{def:category-cubes}
	A \emph{category of cubes} is a small category \(\A\) satisfying
	\[
	\Sq\subset \A\subset \SqhatS,
	\]
	where all three categories have the same objects \([n]\), \(n\geq 0\), and the inclusions are the identity on objects.
\end{definition}

\begin{proposition}[cotransverse--cocubical factorization {\cite[Proposition~3.1.14]{symcub}}]  \label{prop:decomposition_distance}
	Every cotransverse map
	\[
	f:[m]\longrightarrow[n]
	\]
	factors uniquely as
	\[
	[m]\xrightarrow{\ \psi\ }[m]\xrightarrow{\ \delta\ }[n],
	\qquad
	f=\delta\psi,
	\]
	where \(\delta\in\Sq([m],[n])\) and \(\psi\in\SqhatS([m],[m])\) is a cotransverse endomorphism. 
\end{proposition}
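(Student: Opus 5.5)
The plan is to construct the factorization explicitly from the image of \(f\), and then to obtain uniqueness from the rigidity of coface maps. Since \(f\) is cotransverse, it sends each edge of \([m]\) to an edge of \([n]\); more precisely, every edge in direction \(i\) (that is, changing the \(i\)-th coordinate) goes to an edge changing some coordinate \(\sigma(i)\) of \([n]\).

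The first step is to show that \(\sigma(i)\) depends only on \(i\) and not on the chosen edge. Two parallel edges in direction \(i\) lie in a common \(2\)-dimensional face spanned by directions \(i\) and \(j\). The image of such a square is a strict chain of length \(2\) along two paths, each made of two adjacent steps, with the same endpoints. In the Boolean lattice, two such paths from \(a\) to \(b\) with \(\dd(a,b)=2\) cover the same two coordinates. Moreover, because \(f\) is injective, the square cannot collapse to a single edge repeated. Either the image is a genuine square, in which case opposite edges change the same coordinate, or both paths change the same pair in the same order. I would treat this as a case analysis and then propagate it along chains of squares. I expect this step to be the main obstacle: one has to rule out opposite edges going to different coordinates, and I would do so by looking at the rank function \(r\) and the coordinates that change.

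Once \(\sigma\) is defined, the second step is to show that it is injective. A maximal chain from \(0_m\) to \(1_m\) maps to a chain of \(m\) adjacent steps in \([n]\), and injectivity of \(f\) prevents the same coordinate from being flipped twice upward. Let \(S=\sigma(\{1,\dots,m\})\), which has \(m\) elements. Every \(f(x)\) then agrees with \(f(0_m)\) outside \(S\). Now define \(\delta\in\Sq([m],[n])\) as the composite of coface maps inserting the constant coordinates of \(f(0_m)\) at the positions outside \(S\), in increasing order. Then \(\delta\) is injective and its image is exactly the face containing \(f([m])\). Set \(\psi=\delta^{-1}\circ f\). This \(\psi\) is strictly increasing, because \(\delta\) is an order embedding onto its image, and it is cotransverse, because \(\delta\) preserves \(\dd\) on its image. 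This gives existence.

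For uniqueness, suppose \(f=\delta\psi=\delta'\psi'\). By Proposition~\ref{prop:cotransverse-endo-endpoint-preserving}, \(\psi\) and \(\psi'\) are order automorphisms up to their endpoints; in fact each is surjective onto \([m]\), since an injective map \([m]\to[m]\) is bijective. Hence \(\delta([m])=f([m])=\delta'([m])\). Two composites of coface maps with the same image face coincide, because a map of \(\Sq\) is determined by which coordinates it fixes and the values it fixes them to, the free coordinates being kept in order. So \(\delta=\delta'\), and then \(\psi=\psi'\) follows from the injectivity of \(\delta\).
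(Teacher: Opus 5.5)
Your proof has a genuine gap: it rests on two claims that fail for general cotransverse maps. The first is that \(f\), and hence \(\psi\), is injective. The second is that each edge direction \(i\) of \([m]\) has a well-defined target direction \(\sigma(i)\). A strictly increasing map of posets need not be injective on incomparable elements. Take \(\gamma^+:[2]\to[2]\), \(\gamma^+(x_1,x_2)=(\max(x_1,x_2),\min(x_1,x_2))\), which the paper itself treats as an element of \(\SqhatS([2],[2])\). It is cotransverse, but it sends both \((1,0)\) and \((0,1)\) to \((1,0)\). It maps the bottom edge \((0,0)\to(1,0)\) to an edge changing coordinate \(1\). It maps the parallel top edge \((0,1)\to(1,1)\) to \((1,0)\to(1,1)\), which changes coordinate \(2\). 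So the square does collapse onto a two-step path (your second case), and then opposite edges change different coordinates. The step you flagged as the main obstacle therefore cannot be carried out, because \(\sigma\) does not exist in general, and your set \(S=\sigma(\{1,\dots,m\})\) is not defined. Uniqueness breaks for the same reason: \(\psi\) need not be bijective, so \(\delta([m])=f([m])\) is false in general. For \(f=\gamma^+=\id_{[2]}\circ\gamma^+\), the image \(f([2])\) is a three-element chain, not a face.

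Both halves can be repaired without \(\sigma\) and without injectivity of \(f\). For existence, note that \(f\) is order-preserving, so \(f(0_m)\leq f(x)\leq f(1_m)\) for every \(x\in[m]\). Your chain observation (a maximal chain of \(m\) adjacent steps goes to \(m\) adjacent steps) gives \(\dd(f(0_m),f(1_m))=m\). Hence \(f([m])\) lies in the \(m\)-dimensional face \([f(0_m),f(1_m)]\). Take \(\delta\) to be the unique map of \(\Sq\) with \(\delta(0_m)=f(0_m)\) and \(\delta(1_m)=f(1_m)\), and set \(\psi=\delta^{-1}f\) as you did. For uniqueness, use Proposition~\ref{prop:cotransverse-endo-endpoint-preserving} directly: \(\psi(0_m)=0_m\) and \(\psi(1_m)=1_m\) force \(\delta(0_m)=f(0_m)\) and \(\delta(1_m)=f(1_m)\). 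A map of \(\Sq\) is determined by its initial and final vertices: the free coordinates are those where the two vertices differ, and the fixed values are read off the initial vertex. So \(\delta=\delta'\), and \(\psi=\psi'\) then follows from injectivity of \(\delta\), which is the only injectivity the argument needs.
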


\begin{definition} [{\cite[Definition~2.10]{ThickCubes}}] \label{def:thickness}
	A category of cubes \(\A\) is \emph{thick} if the preceding cotransverse--cocubical factorization is internal to \(\A\): whenever
	\[
	f:[m]\longrightarrow[n]
	\]
	is a morphism of \(\A\) and
	\[
	f=\delta\psi
	\]
	is its standard cotransverse-cocubical factorization with \(\delta\in\Sq([m],[n])\) and \(\psi\in\SqhatS([m],[m])\), then
	\[
	\psi\in\A([m],[m]).
	\]
	Since \(\Sq\subset\A\), this implies that both factors \(\delta\) and \(\psi\) are morphisms of \(\A\).
\end{definition}

Every category of cubes is c-direct with the degree function \(d([n])=n\) for \(n\geq 0\). Let
\[
i:\Sq\subset \A
\]
be the inclusion of the box category into a category of cubes. We denote by
\[
\omega_{\A}:\A^\op\Set\longrightarrow \Sq^\op\Set
\]
the restriction functor, and by
\[
\LA:\Sq^\op\Set\longrightarrow \A^\op\Set
\]
its left adjoint, that is, the left Kan extension along \(i^\op:\Sq^\op\to \A^\op\). The canonical map 
\[
\LA(\Sq[n]) \longrightarrow \A[n]
\]
is an isomorphism of \(\A\)-sets for any category of cubes \(\A\) by \cite[Proposition~2.12]{ThickCubes}.

\begin{thm} \label{thm:thick-caracterization-1}
	Let \(\A\) be a category of cubes. Then \(\A\) is thick if and only if, for every \(n\geq 1\), the canonical comparison map
	\[
	\LA(\partial\Sq[n])\longrightarrow \partial\A[n]
	\]
	is an isomorphism of \(\A\)-sets.
\end{thm}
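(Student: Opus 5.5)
Both presheaves vanish in degrees \(\geq n\), so I would compare them levelwise at each \([m]\) with \(m<n\). First compute \(\LA(\partial\Sq[n])\) explicitly. Since \(\LA\) is a left Kan extension, it preserves colimits. Write \(\partial\Sq[n]\) as the coend \(\int^{[k]\in\Sq_{<n}}\Sq([k],[n])\cdot\Sq[k]\), as in Proposition~\ref{prop:lower-degree-part}. Using \(\LA(\Sq[k])\cong\A[k]\) (\cite[Proposition~2.12]{ThickCubes}), I obtain
\[
\LA(\partial\Sq[n])([m])\cong\int^{[k]\in\Sq_{<n}}\Sq([k],[n])\times\A([m],[k]).
\]
This is the set of pairs \((\delta,g)\), with \(g:[m]\to[k]\) in \(\A\), \(\delta\in\Sq([k],[n])\) and \(k<n\), modulo \((\delta\delta',g)\sim(\delta,\delta' g)\) for \(\delta'\) in \(\Sq\). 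The comparison map sends \([\delta,g]\) to \(\delta g\in\A([m],[n])=\partial\A[n]([m])\), the last equality by Proposition~\ref{prop:lower-degree-part}.

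Next I would show that the comparison map is always injective, whether or not \(\A\) is thick. Take \((\delta,g)\) and factor \(g=\delta_g\psi_g\) as in Proposition~\ref{prop:decomposition_distance}. This factorization lives in \(\SqhatS\), so \(\psi_g\) need not lie in \(\A\). The idea is to find a normal form for each class. Every class contains a representative with \(k=m\): when \(g\in\A\) and \(\A\) is thick, \(\psi_g\in\A\), and \((\delta,\delta_g\psi_g)\sim(\delta\delta_g,\psi_g)\). Without thickness this may fail, so injectivity has to be handled more carefully. I would argue directly: if \(\delta g=\delta' g'\), then uniqueness of the cotransverse--cocubical factorization of \(\delta g\) gives \(\delta\delta_g=\delta'\delta_{g'}\) and \(\psi_g=\psi_{g'}\). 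It then remains to relate \((\delta,g)\) and \((\delta',g')\) through the coend relations. For this I would use that two cocubical maps into \([n]\) having a common precomposite factor through a common \(\Sq\)-face: the face spanned by the images is the intersection face. I expect this zig-zag argument to be the main technical obstacle. It should reduce to the combinatorics of faces of \([n]\), where the intersection of two faces containing the image of \(\delta\delta_g\) is again a face.

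Surjectivity is where thickness enters. For the implication "thick \(\Rightarrow\) iso", take \(f\in\A([m],[n])\) with \(m<n\) and write \(f=\delta\psi\). By thickness \(\psi\in\A([m],[m])\), and since \(m<n\) the class \([\delta,\psi]\) is defined and maps to \(f\).

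For the converse, assume the maps are isomorphisms for all \(n\geq1\) and let \(f\in\A([m],[n])\). If \(m<n\), surjectivity gives \(f=\delta g\) with \(\delta\in\Sq([k],[n])\), \(k<n\), and \(g\in\A([m],[k])\). Factor \(g=\delta_g\psi_g\); uniqueness of the factorization of \(f\) then gives \(\psi=\psi_g\). So I would use induction on \(n-m\), applied to \(g\) with \(k-m<n-m\), to get \(\psi_g\in\A\), hence \(\psi\in\A\). If \(m=n\), then \(\delta=\id\) and \(\psi=f\in\A\) trivially. The base case \(k=m\) is also immediate, since then \(g=\psi_g\) lies in \(\A\).
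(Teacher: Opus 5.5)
Your surjectivity argument under thickness is correct. Your converse is the paper's argument: the paper takes a counterexample $f$ with $n-m$ minimal, you induct on $n-m$, and the key step (write $f=\delta g$ with $k<n$, then get $\psi=\psi_g$ from uniqueness) is the same.

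The problem is the injectivity step. The claim that the comparison map is injective whether or not $\A$ is thick is false, so the zig-zag through intersection faces that you flag as the main obstacle cannot be carried out. Take the non-thick category of cubes from the Remark following Theorem~\ref{thm:thick-notcaracterization}, generated by $\Sq$ and $u=\delta^0_3\gamma^-:[2]\to[3]$, and set $n=4$, $m=2$. Since $\A([2],[2])=\{\id_{[2]}\}$, the morphism $u$ has no factorization $u=\delta''g$ with $g\in\A$ and $\delta''$ a nonidentity box map. So no nontrivial coend relation involves a pair whose second component is $u$. Hence $(\delta^0_4,u)$ and $(\delta^0_3,u)$ are distinct singleton classes, even though $\delta^0_4u=\delta^0_3u$ by the cubical identity $\delta^0_4\delta^0_3=\delta^0_3\delta^0_3$. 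Passing through the intersection face would require $(\delta^0_4,\delta^0_3\gamma^-)\sim(\delta^0_4\delta^0_3,\gamma^-)$, which needs $\gamma^-\in\A$. That is precisely thickness.

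The repair is already in your text, because general injectivity is never needed: the converse uses only surjectivity. Under thickness you have $[\delta,g]=[\delta\delta_g,\psi_g]$ and $[\delta',g']=[\delta'\delta_{g'},\psi_{g'}]$ with $\psi_g,\psi_{g'}\in\A([m],[m])$. Uniqueness of the factorization of $\delta g=\delta'g'$ gives $\delta\delta_g=\delta'\delta_{g'}$ and $\psi_g=\psi_{g'}$, so the two classes coincide. This is the argument the paper uses in Lemma~\ref{lem:Aminus-boundary}. For the forward direction of this theorem itself, the paper simply cites \cite[Proposition~2.12]{ThickCubes}.
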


\begin{proof}
	If \(\A\) is thick, the assertion follows from \cite[Proposition~2.12]{ThickCubes}, which proves the isomorphism
	\[
	\LA(\partial\Sq[n])\cong \partial\A[n]
	\]
	for all \(n\geq 0\), hence in particular for all \(n\geq 1\). Conversely, suppose that \(\A\) is not thick. By the characterization of thickness recalled above, there exists a morphism
	\[
	f:[m]\longrightarrow [n]
	\]
	of \(\A\) whose unique cotransverse--cocubical factorization
	\[
	f=\delta\phi,
	\qquad
	\phi:[m]\to[m],
	\qquad
	\delta:[m]\to[n],
	\]
	with \(\delta\in\Sq([m],[n])\) and \(\phi\) cotransverse, satisfies
	\[
	\phi\notin \A([m],[m]).
	\]
	Choose such an \(f\) with \(n-m\) minimal. We first observe that \(m<n\). Indeed, if \(m=n\), then the cubical factor \(\delta\) is the identity of \([m]\), so that \(f=\phi\). Since \(f\in\A([m],[m])\), this would imply \(\phi\in\A([m],[m])\), a contradiction. Hence \(m<n\), and in particular \(n\geq 1\). Since \(m<n\), the morphism \(f\) defines an element
	\[
	f\in \partial\A[n]([m])=\A([m],[n]).
	\]
	We claim that this element is not in the image of
	\[
	\LA(\partial\Sq[n])([m])\longrightarrow \partial\A[n]([m]).
	\]
	Indeed, by the coend formula for the left Kan extension,
	\[
	\LA(\partial\Sq[n])([m])
	\cong
	\int^{[q]\in\Sq}
	\partial\Sq[n]([q])\times \A([m],[q]).
	\]
	Since \(\partial\Sq[n]([q])\) is nonempty only for \(q<n\), every element of \(\LA(\partial\Sq[n])([m])\) is represented by a pair
	\[
	[m]\xrightarrow{g}[q]\xrightarrow{\alpha}[n]
	\]
	with
	\[
	g\in\A([m],[q]),
	\qquad
	\alpha\in\Sq([q],[n]),
	\qquad
	q<n.
	\]
	The comparison map sends the class of this pair to the composite \(\alpha g\in\A([m],[n])\). Suppose, for a contradiction, that \(f\) belongs to the image of the comparison map. Then there exist \(q<n\), a morphism \(g\in\A([m],[q])\), and a cubical map \(\alpha\in\Sq([q],[n])\), such that
	\[
	f=\alpha g.
	\]
	Factor \(g\) uniquely as
	\[
	g=\epsilon\psi,
	\qquad
	\psi:[m]\to[m],
	\qquad
	\epsilon:[m]\to[q],
	\]
	with \(\epsilon\in\Sq([m],[q])\) and \(\psi\) cotransverse. Then
	\[
	f=\alpha\epsilon\psi.
	\]
	Since \(\alpha\epsilon\in\Sq([m],[n])\), uniqueness of the cotransverse--cocubical factorization of \(f\) gives
	\[
	\psi=\phi
	\qquad\text{and}\qquad
	\alpha\epsilon=\delta.
	\]
	Consequently
	\[
	g=\epsilon\phi.
	\]
	Thus \(g\in\A([m],[q])\) is another morphism of \(\A\) whose cotransverse endomorphism part is \(\phi\notin\A([m],[m])\). Since \(q<n\), we have
	\[
	q-m<n-m,
	\]
	contradicting the minimality of \(f\). Therefore \(f\) is not in the image of
	\[
	\LA(\partial\Sq[n])([m])\longrightarrow \partial\A[n]([m]).
	\]
	The comparison map is not surjective at \([m]\), and hence it is not an isomorphism of \(\A\)-sets. Thus, if the comparison map is an isomorphism for all \(n\geq 1\), then \(\A\) must be thick.
\end{proof}

\begin{thm} \label{thm:thick-notcaracterization}
	Let \(\A\) be a category of cubes. If \(\A\) is thick, then \(\A(-,[n])\) is cofibrant for every \(n\geq 0\).
\end{thm}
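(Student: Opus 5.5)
We argue by induction on \(n\), using that \(\LA\) is a left adjoint and so preserves colimits. The key observation is that \(\LA\) sends the generating cofibrations of \(\Sq^\op\Set\) to the generating cofibrations of \(\A^\op\Set\). Indeed, \(\LA(\Sq[k])\cong\A[k]\) for all \(k\) by \cite[Proposition~2.12]{ThickCubes}. By Theorem~\ref{thm:thick-caracterization-1}, thickness gives \(\LA(\partial\Sq[k])\cong\partial\A[k]\) for \(k\geq 1\). For \(k=0\) both sides are empty, since \(\partial\Sq[0]=\varnothing\) and \(\partial\A[0]=\varnothing\). These isomorphisms should be compatible with the inclusions \(\partial\Sq[k]\subseteq\Sq[k]\) and \(\partial\A[k]\subseteq\A[k]\). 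I would verify this by naturality of the canonical comparison map \(\LA\omega_\A\Rightarrow\id\) (the counit) composed with the unit. Concretely, both comparison maps are induced by sending a class \([g,\alpha]\) to \(\alpha g\), so the relevant square commutes.

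Next I use that every precubical set is cellular. This is the classical fact for the direct category \(\Sq\): every presheaf on a direct category is obtained by attaching cells skeleton by skeleton. The skeletal filtration gives pushouts of coproducts of \(\partial\Sq[k]\subseteq\Sq[k]\), because the latching maps are monomorphisms and the nondegenerate \(k\)-cubes of a precubical set are simply all of its \(k\)-cubes. In particular \(\Sq[n]\) is cellular, and already \(\partial\Sq[n]\) is cellular, with a finite cell decomposition.

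Applying \(\LA\), which preserves pushouts, coproducts, transfinite compositions and the initial object, turns this cell decomposition of \(\Sq[n]\) into a transfinite composition of pushouts of coproducts of maps \(\LA(\partial\Sq[k])\to\LA(\Sq[k])\). By the first step these maps are isomorphic to \(\partial\A[k]\subseteq\A[k]\). Hence \(\varnothing\to\LA(\Sq[n])\cong\A[n]\) is a cellular map, and \(\A[n]=\A(-,[n])\) is cellular, in particular cofibrant. No explicit induction on \(n\) is needed, since the cell structure of \(\Sq[n]\) already involves all lower dimensions at once.

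The main obstacle is the compatibility check in the first step. The argument requires that the isomorphisms of Theorem~\ref{thm:thick-caracterization-1} and \cite[Proposition~2.12]{ThickCubes} fit into a commutative square with \(\LA\) applied to the inclusion \(\partial\Sq[k]\subseteq\Sq[k]\). Only then is the image under \(\LA\) a genuine generating cofibration rather than merely an abstractly isomorphic arrow. I would settle this by writing both comparison maps via the coend formula \(\int^{[q]\in\Sq}K([q])\times\A(-,[q])\) and checking that they are the composition maps.
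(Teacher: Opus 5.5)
Your proof is correct and uses the same ingredients as the paper's proof: colimit preservation of \(\LA\), the isomorphism \(\LA(\Sq[k])\cong\A[k]\), and Theorem~\ref{thm:thick-caracterization-1}. The paper organizes these as an induction on \(n\), building \(\partial\A[n]\cong\LA(\partial\Sq[n])\) from lower cubes and then attaching one \(n\)-cell, whereas you transport the whole cell structure of \(\Sq[n]\) at once.

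Your version is slightly more careful. The paper's appeal to ``cofibrant presheaves are closed under such cellular colimits'' tacitly needs your observation that \(\LA\) sends \(\partial\Sq[k]\subseteq\Sq[k]\) to an arrow isomorphic to \(\partial\A[k]\subseteq\A[k]\); cofibrancy of the objects \(\A(-,[m])\) alone would not make the attaching maps cofibrations. This is also the argument the paper itself uses at the end of Section~\ref{sec:branching} to show that \(\LA(K)\) is cofibrant.
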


\begin{proof}
	We prove, by induction on \(n\), that if \(\A\) is thick, then \(\A(-,[n])\) is cofibrant for all \(n\geq 0\). For \(n=0\), the boundary \(\partial\A(-,[0])\) is empty, so \(\A(-,[0])\) is obtained from the initial presheaf by attaching one \(0\)-cell. Assume now that \(\A(-,[m])\) is cofibrant for all \(m<n\). Since \(\partial\Sq[n]\) is obtained in \(\Sq^\op\Set\) by attaching the ordinary lower-dimensional cubical faces \(\Sq[m]\) with \(m<n\), its image \(\LA(\partial\Sq[n])\) is obtained from the presheaves \(\A(-,[m])\), \(m<n\), by the corresponding finite colimit construction. By the induction hypothesis, these presheaves are cofibrant, and cofibrant presheaves are closed under such cellular colimits. Since \(\A\) is thick, we have
\[
\LA(\partial\Sq[n])
\cong
\partial\A(-,[n])
\]
by Theorem~\ref{thm:thick-caracterization-1}. Hence \(\partial\A(-,[n])\) is cofibrant. Therefore \(\A(-,[n])\) is cofibrant, because it is obtained from \(\partial\A(-,[n])\) by attaching one \(n\)-cell along the generating cofibration
\[
\partial\A(-,[n])
\subseteq
\A(-,[n]).
\]
\end{proof}

\begin{remark}
	The converse of the preceding statement is false in general. Let
\[
\gamma^-:[2]\longrightarrow [2],
\qquad
\gamma^-(x,y)=(\min(x,y),\max(x,y)),
\]
and let
\[
u=\delta^0_3\gamma^-:[2]\longrightarrow [3].
\]
Let \(\A\) be the subcategory of \(\widehat{\square}_S\) generated by the box category \(\square\) and by the single morphism \(u\). Then \(\A\) is a category of cubes. It is not thick: indeed, the standard cotransverse--cocubical factorization of \(u\) is
\[
[2]\xrightarrow{\gamma^-}[2]\xrightarrow{\delta^0_3}[3],
\]
but \(\gamma^-\notin \A([2],[2])\). To see this, observe that every nonidentity generator of \(\A\) strictly raises degree. Hence every nonidentity morphism of \(\A\) strictly raises degree, and in particular \(\A([2],[2])=\{\id_{[2]}\}\). Nevertheless, all representable presheaves of \(\A\) are cofibrant. Indeed, the same observation shows that \(\A\) is a direct category for the degree \(d([n])=n\). For a direct category, each representable is cellular: the presheaf \(\A(-,[n])\) is obtained by attaching, in increasing degree \(p\leq n\), one \(p\)-cell for each morphism \([p]\to [n]\) of \(\A\). The boundary of such a cell only involves morphisms from objects of degree \(<p\), and therefore has already been attached at earlier stages. Thus \(\A(-,[n])\) is cellular, hence cofibrant, for every \(n\geq 0\), although \(\A\) is not thick.
\end{remark}

\section{Branching spaces of transverse sets}
\label{sec:branching}

\begin{notation}
	A thick category of cubes \(\A\) is fixed for the rest of the paper. 
\end{notation}

Let \(\A^-\) be the subcategory of \(\A\) with the same objects and whose morphisms are the maps
\[
f:[m]\longrightarrow [n] \qquad \hbox{such that}\qquad f(0_m)=0_n.
\]
The small category \(\A^-\) is clearly c-direct. Let \(\Sq^{-}\) denote the wide subcategory of \(\Sq\) consisting of the cubical maps preserving the initial vertex. Equivalently, \(\Sq^{-}\) is generated by the coface maps \(\delta_i^0\). Thus a morphism \(\delta:[m]\to[n]\) of \(\Sq\) belongs to \(\Sq^{-}\) if and only if \(\delta(0_m)=0_n\). Let
\[
j:\Sq^{-}\longrightarrow \A^{-}
\]
be the inclusion. We denote by
\[
\LA^{-}:(\Sq^{-})^{\op}\Set
\longrightarrow
(\A^{-})^{\op}\Set
\]
the left Kan extension along \(j^{\op}\). Thus
\[
\LA^{-}(\Sq^{-}(-,[n]))
\cong
\A^{-}(-,[n])
\]
for every \(n\geq 0\).

\begin{lem} \label{lem:transverse-box-minus}
	The cotransverse--cocubical factorization of every morphism of \(\A^{-}\) is internal to \(\A^{-}\) after replacing \(\Sq\) by \(\Sq^{-}\). More explicitly, if
	\[
	f:[m]\longrightarrow[n]
	\]
	is a morphism of \(\A^{-}\), and if
	\[
	f=\delta\psi
	\]
	is its cotransverse--cocubical factorization, with
	\[
	\psi:[m]\longrightarrow[m]
	\qquad\text{and}\qquad
	\delta:[m]\longrightarrow[n]
	\]
	where \(\delta\in\Sq([m],[n])\), then
	\[
	\delta\in\Sq^{-}([m],[n])
	\qquad\text{and}\qquad
	\psi\in\A^{-}([m],[m]).
	\]
\end{lem}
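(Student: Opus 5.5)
The plan is to combine thickness of \(\A\) with Proposition~\ref{prop:cotransverse-endo-endpoint-preserving}. Let \(f:[m]\to[n]\) be a morphism of \(\A^{-}\), so \(f\in\A([m],[n])\) and \(f(0_m)=0_n\). Let \(f=\delta\psi\) be its cotransverse--cocubical factorization from Proposition~\ref{prop:decomposition_distance}. The proof then has three steps.

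First, membership in \(\A\). Because \(\A\) is thick (Definition~\ref{def:thickness}), we have \(\psi\in\A([m],[m])\). Since \(\Sq\subset\A\), we also have \(\delta\in\A([m],[n])\).

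Second, the behaviour of \(\psi\) at the initial vertex. The map \(\psi\) is a cotransverse endomorphism of \([m]\), so Proposition~\ref{prop:cotransverse-endo-endpoint-preserving} gives \(\psi(0_m)=0_m\). Together with the first step, this shows \(\psi\in\A^{-}([m],[m])\).

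Third, the behaviour of \(\delta\) at the initial vertex. We compute
\[
\delta(0_m)=\delta(\psi(0_m))=f(0_m)=0_n .
\]
So \(\delta\) is a morphism of \(\Sq\) preserving the initial vertex. By the stated equivalent description of \(\Sq^{-}\), it lies in \(\Sq^{-}([m],[n])\).

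Step three needs \(\delta\in\Sq\) with \(\delta(0_m)=0_n\) to imply that \(\delta\) is a composite of the cofaces \(\delta_i^0\). The paper states this as the definition of \(\Sq^{-}\), so we may take it as given. To justify it anyway, write \(\delta\) in the normal form given by the cubical identities, as a composite of cofaces \(\delta_{i_k}^{\alpha_k}\) with \(i_1<\cdots<i_r\). Then \(\delta(0_m)\) has coordinate \(\alpha_k\) in position \(i_k\). This vector is \(0_n\) only if every \(\alpha_k=0\).

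I expect no serious obstacle. Uniqueness of the factorization is not needed, only its existence together with thickness. The one point to watch is that the endpoint-preservation of Proposition~\ref{prop:cotransverse-endo-endpoint-preserving} is used for \(\psi\) as an element of \(\SqhatS\), which holds whether or not \(\psi\) lies in \(\A\).
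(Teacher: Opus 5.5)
Your proof is correct. It differs from the paper's only in how you obtain \(\psi(0_m)=0_m\).

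The paper does not invoke Proposition~\ref{prop:cotransverse-endo-endpoint-preserving}. It argues directly from the hypothesis \(f(0_m)=0_n\). Since \(0_m\leq\psi(0_m)\) and \(\delta\) is monotone, \(\delta(0_m)\leq\delta\psi(0_m)=0_n\), which forces \(\delta(0_m)=0_n\). If \(\psi(0_m)\neq 0_m\), strict monotonicity of \(\delta\) would give \(\delta(0_m)<0_n\), which is impossible. Thickness then places \(\psi\) in \(\A\), hence in \(\A^-\).

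Your route shows that \(\psi(0_m)=0_m\) holds for the endomorphism part of \emph{every} morphism of \(\A\). So the hypothesis \(f\in\A^-\) is needed only to get \(\delta\in\Sq^-\). The paper itself uses exactly this observation later, in the surjectivity part of Lemma~\ref{lem:minus-base-change-representables}.

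The paper's route uses only strict monotonicity of the two factors and minimality of the initial vertex, not the rank-counting argument behind the endpoint proposition. It would therefore apply to any factorization of an initial-vertex-preserving map through an intermediate cube via strictly increasing maps.

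Your extra justification, that initial-vertex-preserving box maps are composites of the \(\delta_i^0\), is fine but unnecessary. The paper defines \(\Sq^-\) by the initial-vertex condition.
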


\begin{proof}
	Since \(f\) belongs to \(\A^{-}\), one has \(f(0_m)=0_n\). Hence
	\[
	\delta(\psi(0_m))=0_n.
	\]
	Since \(0_m\leq \psi(0_m)\), we have
	\[
	\delta(0_m)\leq \delta(\psi(0_m))=0_n.
	\]
	As \(0_n\) is the least vertex of \([n]\), this implies
	\[
	\delta(0_m)=0_n.
	\]
	Thus \(\delta\in\Sq^{-}([m],[n])\). Moreover, if \(\psi(0_m)\neq 0_m\), then \(0_m<\psi(0_m)\), and the strict monotonicity of \(\delta\) gives
	\[
	\delta(0_m)<\delta(\psi(0_m))=0_n,
	\]
	which is impossible. Therefore \(\psi(0_m)=0_m\). Since \(\A\) is thick, the cotransverse endomorphism \(\psi\) belongs to \(\A([m],[m])\). Together with
	\(\psi(0_m)=0_m\), this gives
	\[
	\psi\in\A^{-}([m],[m]).
	\]
\end{proof}

For \(n\geq 0\), let
\[
\partial\Sq^{-}[n]\subseteq \Sq^{-}(-,[n])
\]
be the strict lower-degree part of the representable presheaf, namely
\[
\partial\Sq^{-}[n]([m])
=
\begin{cases}
	\Sq^{-}([m],[n]), & m<n,\\
	\varnothing, & m\geq n.
\end{cases}
\]
Likewise, \(\partial\A^{-}[n]\) denotes the strict lower-degree part of \(\A^{-}(-,[n])\).

\begin{lem} \label{lem:Aminus-boundary}
	For every \(n\geq 0\), the canonical comparison map
	\[
	\LA^{-}(\partial\Sq^{-}[n])
	\longrightarrow
	\partial\A^{-}[n]
	\]
	is an isomorphism of \(\A^{-}\)-sets.
\end{lem}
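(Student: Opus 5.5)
We compute the left Kan extension pointwise by the coend formula and compare it with \(\partial\A^{-}[n]\) objectwise. For every \([m]\),
\[
\LA^{-}(\partial\Sq^{-}[n])([m])
\cong
\int^{[q]\in\Sq^{-}}\partial\Sq^{-}[n]([q])\times\A^{-}([m],[q]).
\]
Since \(\partial\Sq^{-}[n]([q])\) is nonempty only for \(q<n\), an element is represented by a pair \([m]\xrightarrow{g}[q]\xrightarrow{\alpha}[n]\) with \(g\in\A^{-}\), \(\alpha\in\Sq^{-}\) and \(q<n\). The comparison map sends its class to \(\alpha g\). If \(m\geq n\), both sides are empty: the target by definition, and the source because \(\A^{-}([m],[q])=\varnothing\) for \(q<n\leq m\) by the c-direct condition. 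So assume \(m<n\). We must show that \(\gamma:[\alpha,g]\mapsto \alpha g\) is a bijection onto \(\A^{-}([m],[n])\).

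For surjectivity, let \(f\in\A^{-}([m],[n])\) with \(m<n\). By Lemma~\ref{lem:transverse-box-minus}, \(f=\delta\psi\) with \(\delta\in\Sq^{-}([m],[n])\) and \(\psi\in\A^{-}([m],[m])\). Since \(m<n\), the pair \((\delta,\psi)\) is an element of the coend over \(q=m\), and \(\gamma([\delta,\psi])=f\).

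For injectivity, we show that every class \([\alpha,g]\) equals the canonical class \([\delta,\psi]\) of \(\alpha g\). Write \(g=\epsilon\psi'\) with \(\epsilon\in\Sq^{-}([m],[q])\) and \(\psi'\in\A^{-}([m],[m])\), again by Lemma~\ref{lem:transverse-box-minus} applied to \(g\). Since \(\epsilon\) is a morphism of \(\Sq^{-}\), the coend relation gives \([\alpha,\epsilon\psi']=[\alpha\epsilon,\psi']\), and now the intermediate object is \([m]\). We have \(\alpha g=(\alpha\epsilon)\psi'\) with \(\alpha\epsilon\in\Sq([m],[n])\) and \(\psi'\) a cotransverse endomorphism. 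By uniqueness of the cotransverse--cocubical factorization (Proposition~\ref{prop:decomposition_distance}), \(\alpha\epsilon=\delta\) and \(\psi'=\psi\). Hence every representative of a preimage of \(f\) equals \([\delta,\psi]\), and \(\gamma\) is injective.

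Naturality in \([m]\) is automatic, because \(\gamma\) is the canonical comparison map. The main point needing care is the injectivity step: the reduction \([\alpha,\epsilon\psi']=[\alpha\epsilon,\psi']\) is legitimate only because \(\epsilon\) lies in \(\Sq^{-}\), the category over which the coend is taken. This is exactly what Lemma~\ref{lem:transverse-box-minus} provides, and it is where thickness of \(\A\) is used. The argument mirrors the thick direction of Theorem~\ref{thm:thick-caracterization-1}, i.e.\ \cite[Proposition~2.12]{ThickCubes}, with \(\Sq\) and \(\A\) replaced by \(\Sq^{-}\) and \(\A^{-}\).
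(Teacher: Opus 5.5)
Your proof is correct and follows essentially the same route as the paper. It uses the coend formula for \(\LA^{-}\), gets surjectivity from the factorization \(f=\delta\psi\) with \(\delta\in\Sq^{-}\) and \(\psi\in\A^{-}\) supplied by Lemma~\ref{lem:transverse-box-minus}, and gets injectivity by sliding the \(\Sq^{-}\)-part of \(g\) across the coend relation and invoking uniqueness of the cotransverse--cocubical factorization. The differences are cosmetic: you reduce every class to the canonical representative \([\delta,\psi]\) instead of comparing two arbitrary representatives, and you treat the empty case \(m\geq n\) explicitly.
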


\begin{proof}
	We evaluate the comparison map at \([r]\). By the coend formula for the left Kan extension, one has
	\[
	\LA^{-}(\partial\Sq^{-}[n])([r])
	\cong
	\int^{[q]\in\Sq^{-}}
	\partial\Sq^{-}[n]([q])\times \A^{-}([r],[q]).
	\]
	Since \(\partial\Sq^{-}[n]([q])\) is nonempty only for \(q<n\), every element is represented by a pair
	\[
	[r]\xrightarrow{g}[q]\xrightarrow{\alpha}[n],
	\]
	with
	\[
	g\in\A^{-}([r],[q]),
	\qquad
	\alpha\in\Sq^{-}([q],[n]),
	\qquad
	q<n.
	\]
	The comparison map sends the class of such a pair to the composite
	\[
	\alpha g\in \A^{-}([r],[n]).
	\]
	Since \(g\) exists only if \(r\leq q\), this composite belongs to \(\partial\A^{-}[n]([r])\). We first prove surjectivity. Let
	\[
	f\in \partial\A^{-}[n]([r]).
	\]
	Thus \(r<n\), and \(f:[r]\to[n]\) is a morphism of \(\A^{-}\). Write the standard
	cotransverse--cocubical factorization of \(f\) as
	\[
	f=\delta\psi,
	\]
	with \(\delta\in\Sq([r],[n])\) and \(\psi:[r]\to[r]\) cotransverse. By Lemma~\ref{lem:transverse-box-minus},
	\[
	\delta\in\Sq^{-}([r],[n])
	\qquad\text{and}\qquad
	\psi\in\A^{-}([r],[r]).
	\]
	Since \(r<n\), the morphism \(\delta\) is an element of
	\(\partial\Sq^{-}[n]([r])\). Hence \(f\) is the image of the class represented by
	\[
	[r]\xrightarrow{\psi}[r]\xrightarrow{\delta}[n].
	\]
	We now prove injectivity. Suppose that two representatives
	\[
	[r]\xrightarrow{g}[q]\xrightarrow{\alpha}[n],
	\qquad
	[r]\xrightarrow{h}[q']\xrightarrow{\beta}[n],
	\]
	with \(\alpha,\beta\) in \(\Sq^{-}\), have the same image:
	\[
	\alpha g=\beta h.
	\]
	Factor \(g\) and \(h\) by their cotransverse--cocubical factorizations:
	\[
	g=\epsilon\psi,
	\qquad
	h=\eta\chi,
	\]
	with
	\[
	\epsilon\in\Sq^{-}([r],[q]),
	\quad
	\eta\in\Sq^{-}([r],[q']),
	\quad
	\psi,\chi\in\A^{-}([r],[r]),
	\]
	again by the preceding lemma. In the coend, the defining relations give
	\[
	[\alpha,g]=[\alpha\epsilon,\psi],
	\qquad
	[\beta,h]=[\beta\eta,\chi].
	\]
	The equality \(\alpha g=\beta h\) becomes
	\[
	\alpha\epsilon\psi=\beta\eta\chi.
	\]
	Here \(\alpha\epsilon\) and \(\beta\eta\) are morphisms of \(\Sq^{-}\). By uniqueness of the standard cotransverse--cocubical factorization, we obtain
	\[
	\alpha\epsilon=\beta\eta
	\qquad\text{and}\qquad
	\psi=\chi.
	\]
	Therefore
	\[
	[\alpha,g]=[\alpha\epsilon,\psi]
	=
	[\beta\eta,\chi]=[\beta,h].
	\]
	This proves injectivity, and hence the comparison map is an isomorphism.
\end{proof}

\begin{thm} \label{thm:Aminus-representablecofibrant}
	For every \(n\geq 0\), the representable presheaf
	\[
	\A^{-}(-,[n]) : (\A^{-})^{\op}\longrightarrow\Set
	\]
	is cofibrant. Equivalently, \(\A^{-}\) has cofibrant representables.
\end{thm}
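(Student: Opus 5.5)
We mimic the proof of Theorem~\ref{thm:thick-notcaracterization}, replacing \(\Sq\), \(\A\), \(\LA\) and Theorem~\ref{thm:thick-caracterization-1} by \(\Sq^{-}\), \(\A^{-}\), \(\LA^{-}\) and Lemma~\ref{lem:Aminus-boundary}. We argue by induction on \(n\) that \(\A^{-}(-,[n])\) is cofibrant in \((\A^{-})^{\op}\Set\) (recall that \(\A^{-}\) is c-direct with \(d([n])=n\)). For \(n=0\), the boundary \(\partial\A^{-}[0]\) is empty, since \(\A^{-}([m],[0])=\varnothing\) for \(m>0\). Hence \(\A^{-}(-,[0])\) is obtained from the initial presheaf by attaching a single \(0\)-cell, and is therefore cellular.

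For the inductive step, assume \(\A^{-}(-,[m])\) is cofibrant for all \(m<n\). We first observe that \(\Sq^{-}\) is a direct category, since every nonidentity coface strictly raises dimension. So \(\partial\Sq^{-}[n]\) is cellular in \((\Sq^{-})^{\op}\Set\). Concretely, it is built by attaching, in increasing degree \(p<n\), one cell \(\partial\Sq^{-}[p]\subseteq\Sq^{-}[p]\) for each morphism \([p]\to[n]\) of \(\Sq^{-}\) with \(p<n\). The functor \(\LA^{-}\) is a left adjoint, so it preserves colimits. It also sends representables to representables, \(\LA^{-}(\Sq^{-}[p])\cong\A^{-}(-,[p])\). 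By Lemma~\ref{lem:Aminus-boundary}, it sends each \(\partial\Sq^{-}[p]\subseteq\Sq^{-}[p]\) to the generating cofibration \(\partial\A^{-}[p]\subseteq\A^{-}(-,[p])\). Applying \(\LA^{-}\) to the cell decomposition therefore exhibits \(\LA^{-}(\partial\Sq^{-}[n])\) as a cellular object of \((\A^{-})^{\op}\Set\). In fact this step does not even use the induction hypothesis, because the cell structure transports directly. Using Lemma~\ref{lem:Aminus-boundary} once more, we get \(\partial\A^{-}[n]\cong\LA^{-}(\partial\Sq^{-}[n])\), so \(\partial\A^{-}[n]\) is cellular. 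Attaching one \(n\)-cell along \(\partial\A^{-}[n]\subseteq\A^{-}(-,[n])\) then shows that \(\A^{-}(-,[n])\) is cellular, hence cofibrant.

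The main point needing care is that \(\LA^{-}\) really transports cell attachments to cell attachments. This comes down to two facts. First, colimits commute with \(\LA^{-}\). Second, the image of each generating inclusion is the corresponding generating inclusion of \(\A^{-}\), and not merely some map between its source and target. The second fact holds because the comparison maps \(\LA^{-}(\partial\Sq^{-}[p])\to\partial\A^{-}[p]\) and \(\LA^{-}(\Sq^{-}[p])\to\A^{-}(-,[p])\) are isomorphisms compatible with the inclusions; compatibility follows from naturality of the comparison map. Lemma~\ref{lem:Aminus-boundary} already contains the substantive input, namely thickness via Lemma~\ref{lem:transverse-box-minus}, so once it is granted the remaining verification is formal.
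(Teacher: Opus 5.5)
Your proof is correct and is essentially the paper's argument: you transport the finite cell structure of \(\partial\Sq^{-}[n]\) through the colimit-preserving functor \(\LA^{-}\), use Lemma~\ref{lem:Aminus-boundary} to identify the transported cells with the generating cofibrations \(\partial\A^{-}[p]\subseteq\A^{-}(-,[p])\) and the result with \(\partial\A^{-}[n]\), and then attach one \(n\)-cell. Your remark that the induction hypothesis is never actually used is accurate, and it applies equally to the paper's version of the proof.
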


\begin{proof}
	We prove, by induction on \(n\), that \(\A^{-}(-,[n])\) is cellular, hence cofibrant. For \(n=0\), the boundary \(\partial\A^{-}[0]\) is empty. Therefore \(\A^{-}(-,[0])\) is obtained from the initial presheaf by attaching one \(0\)-cell. Assume now that \(\A^{-}(-,[m])\) is cellular for every \(m<n\). The presheaf \(\partial\Sq^-[n]\) is obtained in \((\Sq^-)^{\op}\Set\) by the finite cellular construction obtained by attaching the proper initial-vertex-preserving faces of \([n]\). Equivalently, it is built using only cells \(\Sq^-(-,[m])\) with \(m<n\), attached along their boundaries \(\partial\Sq^-[m]\). Applying the colimit-preserving functor \(\LA^{-}\), and using the preceding lemma for all \(m\leq n\), this gives a cellular construction of
	\[
	\LA^{-}(\partial\Sq^{-}[n])
	\cong
	\partial\A^{-}[n]
	\]
	using only the generating cofibrations
	\[
	\partial\A^{-}[m]\subseteq \A^{-}(-,[m]),
	\qquad m<n.
	\]
	Hence \(\partial\A^{-}[n]\) is cellular. Finally, \(\A^{-}(-,[n])\) is obtained from \(\partial\A^{-}[n]\) by attaching one \(n\)-cell along the generating cofibration
	\[
	\partial\A^{-}[n]\subseteq \A^{-}(-,[n]).
	\]
	Therefore \(\A^{-}(-,[n])\) is cellular. This completes the induction.
\end{proof}

A \textit{directed path} in \([0,1]^n\) starting from \(0_n\) is a continuous map \(\gamma=(\gamma_1,\dots,\gamma_n)\) from \([0,\ell]\) to \([0,1]^n\) such that each \(\gamma_i\) is nondecreasing. It is \textit{natural} if (see \cite[Section~2.2]{MR2521708} and \cite[Definition~4.8]{NaturalRealization})
\[
t=\dd(0_n,\gamma(t))\qquad \hbox{for all}\qquad t\in [0,\ell].
\]
Let \(n\geq 1\). A \textit{short natural directed path} of \([0,1]^n\) is a natural directed path \(\phi:[0,\epsilon]\to [0,1]^n\) such that \(\phi(0)=0_n\) and \(0<\epsilon<1\). The real number \(\epsilon\) is called the \textit{natural length} of the short natural directed path. The set of short natural directed paths of \([0,1]^n\) of natural length \(\epsilon\in ]0,1[\) is denoted by \(N_n(\epsilon)\). We set \(N_0(\epsilon)=\varnothing\).  For \(n\geq 1\), by \cite[Proposition~4.7]{cubical-branching}, the set \(N_n(\epsilon)\) equipped with the compact-open topology is \(\Delta\)-generated, \(\Delta\)-Hausdorff, metrizable, contractible, compact and sequentially compact.

\begin{definition} [{\cite[Definition~3.2]{DirectedDegeneracy}}]
	Let \(f=(f_1,\dots,f_n):[n]\to [n]\) be a cotransverse map. Let \[\T(f):[0,1]^n\to [0,1]^n\] be the continuous map defined by 
	\[
	\T(f)(x_1,\dots,x_n) = (\T(f)_1(x_1,\dots,x_n),\dots,\T(f)_n(x_1,\dots,x_n))
	\]
	with \[\T(f)_i(x_1,\dots,x_n) = \max_{(\epsilon_1,\dots,\epsilon_n)\in f_i^{-1}(1)} \min \{x_k\mid \epsilon_k=1\}\] for all \(1\leq i\leq n\).
\end{definition}

\begin{notation} [{\cite[Notation~3.6]{DirectedDegeneracy}}]
	For \(\delta_i^\alpha:[n-1]\to [n] \in \Sq\), let \[\T(\delta_i^\alpha)=
	\begin{cases}
		[0,1]^{n-1} \to [0,1]^n\\
		(\epsilon_1, \dots, \epsilon_{n-1})\mapsto (\epsilon_1,\dots, \epsilon_{i-1}, \alpha, \epsilon_i, \dots, \epsilon_{n-1})
	\end{cases}
	\] for all \(n\geq 1\) and \(\alpha\in\{0,1\}\).
\end{notation}

If
\[
f:[m]\longrightarrow [n]
\]
is a morphism of \(\A\), we denote by
\[
\T(f):[0,1]^m\longrightarrow [0,1]^n
\]
the associated continuous map of topological cubes, as defined above. Together with the identities \(\T(\id_{[n]})=\id_{[0,1]^n}\) and \(\T(gf)=\T(g)\T(f)\) \cite[Proposition~3.8]{DirectedDegeneracy}, this defines a functor from \(\A\) to \(\Top\), sending \([n]\) to \([0,1]^n\), where, in this paper, \(\Top\) denotes either the category of \(\Delta\)-generated spaces or the category of \(\Delta\)-Hausdorff \(\Delta\)-generated spaces. 

\begin{definition}
Let \(K\) be an \(\A\)-set. Let 
\[
|K|_{\geom} = \int^{[n]\in \A} K_n\cdot |\A[n]|_{\geom}\,.
\]
This gives rise to a colimit-preserving functor \(|-|_{\geom}:\A^{\op}\Set \to \Top\) called the \textit{geometric realization}. 
\end{definition}

\begin{proposition}
	Let \(\epsilon\in ]0,1[\). For any \(u\in N_m(\epsilon)\) and any morphism \(f:[m]\longrightarrow [n]\) of \(\A^-\), one has \(T(f)u \in N_n(\epsilon)\). Equivalently, 
	\[
	\T(f)(N_m(\epsilon))\subseteq N_n(\epsilon).
	\]
\end{proposition}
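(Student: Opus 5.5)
By the cotransverse--cocubical factorization (Proposition~\ref{prop:decomposition_distance}) and Lemma~\ref{lem:transverse-box-minus}, every morphism \(f\) of \(\A^-\) factors as \(f=\delta\psi\) with \(\delta\in\Sq^-([m],[n])\) and \(\psi\in\A^-([m],[m])\) a cotransverse endomorphism. Since \(\T\) is functorial, \(\T(f)=\T(\delta)\T(\psi)\). It therefore suffices to treat the two cases separately: cotransverse endomorphisms preserving \(0_m\), and the coface maps \(\delta_i^0\), which generate \(\Sq^-\). The case \(m=0\) is vacuous because \(N_0(\epsilon)=\varnothing\), so assume \(m\geq 1\); then \(n\geq m\geq 1\).

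\emph{Coface maps.} For \(\delta_i^0\), the map \(\T(\delta_i^0)\) inserts a coordinate equal to \(0\).
\begin{itemize}
\item Each coordinate of \(\T(\delta_i^0)\circ u\) is either \(0\) or a coordinate of \(u\), so it is nondecreasing and continuous.
\item The starting point is \(\T(\delta_i^0)(0_{m})=0_{m+1}\).
\item \(\dd(0,-)\), the sum of the coordinates, is unchanged.
\end{itemize}
Hence naturality and the natural length \(\epsilon\) are preserved, and the case of \(\delta_i^0\) is immediate.

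\emph{Endomorphisms.} Let \(\psi\in\A^-([m],[m])\). Then \(\T(\psi)\) is continuous and monotone, since each coordinate is a max of mins of coordinates. Also \(\T(\psi)(0_m)=0_m\), since each coordinate of \(\T(\psi)(0_m)\) is a max of mins of zeros; alternatively this follows from Proposition~\ref{prop:cotransverse-endo-endpoint-preserving}. So \(\T(\psi)\circ u\) is a directed path starting at \(0_m\). The remaining point, which I expect to be the main obstacle, is naturality. One must show that
\[
\sum_i \T(\psi)_i(x)=\sum_k x_k
\]
at least for points \(x=u(t)\) with \(\sum_k x_k=t<1\).

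\emph{Plan for naturality.} Work on the region \(\{x\in[0,1]^m : \sum_k x_k<1\}\). By an order/permutation argument, reduce to a point \(x\) with coordinates sorted, \(x_{\sigma(1)}\geq\cdots\geq x_{\sigma(m)}\). Then \(\min\{x_k\mid\epsilon_k=1\}\) is maximized by choosing vertices \(\epsilon\) lying on the maximal chain \(0_m<e_{\sigma(1)}<e_{\sigma(1)}+e_{\sigma(2)}<\cdots<1_m\). Two facts about \(\psi\) are then needed. First, \(\psi\) is strictly increasing and adjacency-preserving, and it fixes \(0_m\) and \(1_m\). It therefore sends maximal chains to maximal chains: consecutive images differ in exactly one new coordinate, and the \(m\) steps hit \(m\) distinct coordinates. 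Second, one must exploit that \(\psi\) is a lattice-like map on \(\{0,1\}^m\), as in the description of cotransverse maps in \cite{symcub}. Using this, I would try to show that \(\T(\psi)(x)\) is obtained from \(x\) by assigning to each coordinate \(i\) the value \(x_{\sigma(j)}\), where \(j\) is the step at which the image chain \(\psi(\text{chain})\) turns coordinate \(i\) on. Because this assignment is a bijection of coordinates, the coordinate sum is preserved.

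The delicate step is justifying that the max over \(f_i^{-1}(1)\) is attained along the image of the sorted chain. I would first try to prove this using the characterization of cotransverse maps as those whose coordinates are of the form \(\psi_i(\epsilon)=\max\min\) over subsets, with the strict-increase condition forcing these subsets to be compatible. If that fails, I would instead use a direct argument. By \cite[Proposition~3.8]{DirectedDegeneracy}, \(\T(\psi)\) is the standard geometric map compatible with \(\dd\), and \(\dd\) is a Lawvere metric on the cube by \cite[Proposition~1.5]{DirectedDegeneracy}. Monotonicity gives \(\dd(0,\T(\psi)x)\geq\) the chain value, and applying the same reasoning to \(1_m\) yields the reverse inequality.
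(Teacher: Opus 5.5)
Your factorization, the coface case, and the reduction of the endomorphism case to the identity $\sum_i\T(\psi)_i(x)=\sum_k x_k$ are all fine. The formula you aim for is also correct: coordinate $i$ receives $x_{\sigma(j)}$, where $j$ is the step at which the image chain switches $i$ on.

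The gap is that this identity is the entire content of the statement for endomorphisms, and you do not prove it. The decisive claim, that the maximum defining $\T(\psi)_i(x)$ is attained on the sorted chain, is left open. The tools you propose for it either do not hold as stated or are left unspecified:
\begin{itemize}
\item A cotransverse endomorphism need not preserve joins or meets. For example, $\gamma^-$ sends $(1,0)$ and $(0,1)$ both to $(0,1)$, but their join to $(1,1)$ and their meet to $(0,0)$. So a ``lattice-like'' argument is not available, and none is needed.
\item The fallback via the Lawvere metric is too vague to check.
\item \cite[Proposition~3.8]{DirectedDegeneracy} is only the functoriality of $\T$. The distance statement is \cite[Corollary~3.15]{DirectedDegeneracy}.
\end{itemize}
The paper simply applies Corollary~3.15 to the cotransverse map $f$ itself. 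Together with $\T(f)(0_m)=0_n$ this gives $\dd(0_n,\T(f)u(t))=\dd(0_m,u(t))=t$, and it concludes with \cite[Proposition~4.15]{DirectedDegeneracy}, with no factorization at all.

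The missing step needs nothing beyond monotonicity of $\psi$. Fix $x\in[0,1]^m$, choose $\sigma$ with $x_{\sigma(1)}\geq\cdots\geq x_{\sigma(m)}$, and put $c_j=e_{\sigma(1)}+\cdots+e_{\sigma(j)}$. Suppose $\psi_i(\epsilon)=1$. Then $\epsilon\neq 0_m$; let $j$ be the largest index with $\epsilon_{\sigma(j)}=1$. We have $\epsilon\leq c_j$, so $\psi_i(c_j)=1$, and $\epsilon$ and $c_j$ contribute the same value $x_{\sigma(j)}$ to the maximum. Since $j\mapsto\psi_i(c_j)$ is nondecreasing and $j\mapsto x_{\sigma(j)}$ is nonincreasing, this gives
\[
\T(\psi)_i(x)=\max\{x_{\sigma(j)}\mid \psi_i(c_j)=1\}=x_{\sigma(j_i)},
\qquad
j_i=\min\{j\mid \psi_i(c_j)=1\}.
\]
Now $0_m=\psi(c_0)<\psi(c_1)<\cdots<\psi(c_m)=1_m$ with consecutive terms adjacent. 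Hence each step switches on exactly one new coordinate, so $i\mapsto j_i$ is a bijection. Therefore $\sum_i\T(\psi)_i(x)=\sum_k x_k$ on the whole cube; the restriction to $\sum_k x_k<1$ is unnecessary.

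With this inserted, your argument is complete. Compared with the paper's two-line citation, it is self-contained, and it shows explicitly that $\T(\psi)$ permutes coordinates on each region $x_{\sigma(1)}\geq\cdots\geq x_{\sigma(m)}$. That is exactly the part of the cited distance-preservation result needed here.
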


\begin{proof}
	By \cite[Corollary~3.15]{DirectedDegeneracy}, for any cotransverse map \(f:[m]\longrightarrow [n]\), the continuous map 
	\[
	\T(f):[0,1]^m\longrightarrow [0,1]^n
	\]
	induces a map of Lawvere metric spaces which is also quasi-isometric for the Lawvere distance \(\dd\). Thus for any \(u\in N_m(\epsilon)\), one has for all \(t\in[0,\epsilon]\)
	\[t=\dd(0_m,u(t)) = \dd(0_n,\T(f)(u(t))). \]
	Using \cite[Proposition~4.15]{DirectedDegeneracy}, we deduce that \(\T(f)u\) is natural.
\end{proof}

Hence we obtain a functor 
\[
N(\epsilon):\A^-\longrightarrow \Top,
\qquad
[n]\longmapsto N_n(\epsilon).
\] 
Let
\[
\rho_\A:\A^{\op}\Set\longrightarrow(\A^-)^{\op}\Set
\]
denote the restriction functor along the inclusion \(\A^-\subseteq\A\). Since colimits of presheaves are computed pointwise, \(\rho_\A\) preserves all colimits.

\begin{lem} \label{lem:rho-preserves-cofibrant}
	The restriction functor
	\[
	\rho_\A:\A^{\op}\Set\longrightarrow(\A^-)^{\op}\Set
	\]
	preserves cofibrant presheaves.
\end{lem}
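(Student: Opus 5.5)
Since \(\rho_\A\) preserves all colimits (pointwise computation) and retracts, and cofibrant objects are retracts of cellular ones, it suffices to show that \(\rho_\A\) sends each generating cofibration \(\partial\A[n]\subseteq\A[n]\) to a cellular map of \((\A^-)^{\op}\Set\). Indeed, a transfinite composition of pushouts of generating cofibrations is then sent to a transfinite composition of pushouts of cellular maps, which is cellular. Since \(\rho_\A\) preserves the degree truncation, \(\rho_\A(\partial\A[n])\) is the strict lower-degree part of \(\rho_\A(\A[n])\). So the task reduces to showing that \(\rho_\A(\A[n])\) is cellular and can be built by attaching cells in increasing degree, so that the stage below degree \(n\) is exactly \(\rho_\A(\partial\A[n])\).

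The plan is to decompose \(\rho_\A(\A[n])\) according to the image of the initial vertex. For \(f\in\A([m],[n])\), set \(v=f(0_m)\in[n]\). Precomposition with a morphism \(g\) of \(\A^-\) preserves this vertex, since \(fg(0_p)=f(0_m)\). Hence
\[
\rho_\A(\A[n])=\coprod_{v\in[n]} P_v,
\qquad
P_v([m])=\{f\in\A([m],[n])\mid f(0_m)=v\}.
\]
It then suffices to show that each \(P_v\) is a cellular \(\A^-\)-set whose lower-degree part is obtained at the correct stage.

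Fix \(v\) and let \(k=\#\{i\mid v_i=0\}\). A strictly increasing map sending \(0_m\) to \(v\) lands in the up-set \(\{y\geq v\}\cong[k]\). So \(f\) should factor as \(f=\delta\circ f'\), where \(\delta:[k]\to[n]\) is the unique cubical map with \(\delta(0_k)=v\) (a composite of cofaces \(\delta_i^1\)) and \(f'(0_m)=0_k\). I would apply the thick factorization \(f=\delta_f\psi\) of Definition~\ref{def:thickness}. The argument of Lemma~\ref{lem:transverse-box-minus} (monotonicity) gives \(\psi(0_m)=0_m\), and \(\delta_f(0_m)=v\). Cubical maps with \(\delta_f(0_m)=v\) factor as \(\delta\circ\epsilon\) with \(\epsilon\in\Sq^-([m],[k])\), and \(\epsilon\psi\in\A^-\) by thickness. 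Uniqueness of the factorization should then yield a natural bijection \(P_v\cong\A^-(-,[k])\), given by \(f\mapsto\epsilon\psi\) and inverse \(f'\mapsto\delta f'\); one checks that \(\delta f'\in\A\) because \(\Sq\subset\A\).

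With this identification, \(P_v\cong\A^-[k]\) is cellular by Theorem~\ref{thm:Aminus-representablecofibrant}, and its cells have degree at most \(k\leq n\). Moreover \(P_v\cap\rho_\A(\partial\A[n])\) is the part of degree \(<n\). For \(v\neq0_n\) we have \(k<n\), so this part is all of \(P_v\). For \(v=0_n\) it is \(\partial\A^-[n]\). Therefore \(\rho_\A(\partial\A[n])\to\rho_\A(\A[n])\) is the coproduct of identities and the generating cofibration \(\partial\A^-[n]\subseteq\A^-[n]\), hence cellular, and \(\rho_\A(\partial\A[n])\) is itself cellular as a coproduct of cellular objects.

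The main obstacle is the identification \(P_v\cong\A^-[k]\). It must be checked to be natural in \(\A^-\), and \(\epsilon\psi\) must actually lie in \(\A\). This is exactly where thickness enters, with care that the cubical part splits through \(\delta\).
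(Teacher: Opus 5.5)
Your proof is correct, but it takes a different route from the paper's.

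\textbf{The paper's route.} It checks degreewise that the square with $\partial\A^-[n]\subseteq\A^-(-,[n])$ on the left and $\rho_\A(\partial\A[n])\subseteq\rho_\A(\A(-,[n]))$ on the right is a pushout. For $q<n$, both right-hand terms equal $\A([q],[n])$. For $q>n$, everything is empty. For $q=n$, one only needs $\A([n],[n])=\A^-([n],[n])$, which is Proposition~\ref{prop:cotransverse-endo-endpoint-preserving}. This is shorter and never uses thickness, so the lemma actually holds for every category of cubes.

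\textbf{Your route.} You decompose $\rho_\A(\A[n])$ globally by initial vertex. This yields more, namely the corner decomposition
\[
\rho_\A(\A[n])\cong\coprod_{v\in[n]}\A^-(-,[k_v]),
\]
where $k_v$ is the number of zero coordinates of $v$. That decomposition is essentially Lemma~\ref{lem:minus-base-change-representables} combined with the decomposition of $\rho_\Sq(\Sq[n])$ into corners in Section~\ref{sec:free-transverse}. The price is that you genuinely need thickness, to guarantee $\epsilon\psi\in\A$. Both routes end at the same conclusion: $\rho_\A$ sends $\partial\A[n]\subseteq\A[n]$ to a pushout of $\partial\A^-[n]\subseteq\A^-[n]$.

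\textbf{Two small corrections.}
\begin{enumerate}
\item The monotonicity argument of Lemma~\ref{lem:transverse-box-minus} depends on $0_n$ being the least vertex. It therefore does not yield $\psi(0_m)=0_m$ when $v\neq 0_n$. Cite Proposition~\ref{prop:cotransverse-endo-endpoint-preserving} instead, which gives this for every cotransverse endomorphism.
\item Your appeal to Theorem~\ref{thm:Aminus-representablecofibrant} is superfluous. Once the map is identified with $\id\sqcup(\partial\A^-[n]\subseteq\A^-[n])$, it is already a pushout of a generating cofibration, which is all your reduction needs. Cellularity of $\rho_\A(\partial\A[n])$ itself is not required.
\end{enumerate}
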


\begin{proof}
	Since \(\rho_\A\) preserves colimits and retracts, it suffices to prove that it sends each generating cofibration
	\[
	\partial\A[n]\subseteq \A(-,[n])
	\]
	to a cofibration of \((\A^-)^{\op}\Set\). Consider the commutative square
	\[
	\begin{tikzcd}[column sep=3em,row sep=3em]
		\partial\A^-[n] \arrow[r] \arrow[d]
		&
		\rho_\A(\partial\A[n]) \arrow[d]
		\\
		\A^-(-,[n]) \arrow[r]
		&
		\rho_\A(\A(-,[n])) .
	\end{tikzcd}
	\]
	We claim that this square is a pushout. Evaluating at \([q]\), there are three cases. 
	\begin{enumerate}
		\item If \(q<n\), then both \(\rho_\A(\partial\A[n])([q])\) and \(\rho_\A(\A(-,[n]))([q])\) are equal to \(\A([q],[n])\), so the assertion is immediate.
		\item If \(q>n\), then all terms are empty, since \(\A\) is c-direct.
		\item If \(q=n\), then
		\[
		\partial\A[n]([n])=\varnothing
		\qquad\text{and}\qquad
		\partial\A^-[n]([n])=\varnothing.
		\]
		Moreover
		\[
		\rho_\A(\A(-,[n]))([n])=\A([n],[n])
		\]
		and
		\[
		\A^-(-,[n])([n])=\A^-([n],[n]).
		\]
		These two sets are equal since every endomorphism of \([n]\) preserves \(0_n\) by Proposition~\ref{prop:cotransverse-endo-endpoint-preserving}. Hence the square is a pushout objectwise, and therefore a pushout of presheaves.
	\end{enumerate}
	Thus \(\rho_\A(\partial\A[n]\subseteq\A(-,[n]))\) is a pushout of the generating cofibration
	\[
	\partial\A^-[n]\subseteq\A^-(-,[n]).
	\]
	It is therefore a cofibration. Hence \(\rho_\A\) sends cellular \(\A\)-sets to cellular \(\A^-\)-sets, and since it also preserves retracts, it sends cofibrant \(\A\)-sets to cofibrant \(\A^-\)-sets.
\end{proof}

Define
\[
\widehat N_\epsilon:(\A^-)^{\op}\Set\longrightarrow\Top
\]
by
\[
\widehat N_\epsilon(H)
=
\int^{[n]\in\A^-} H([n])\cdot N_n(\epsilon).
\]

\begin{definition} \label{def:branching-space}
	For an \(\A\)-set \(K\), set
\[
B^-_\epsilon(K)
=
\widehat N_\epsilon(\rho_\A K).
\]
Equivalently,
\[
B^-_\epsilon(K)
=
\int^{[n]\in\A^-} K_n\cdot N_n(\epsilon).
\]
The topological space \(B^-_\epsilon(K)\) is called the \textit{\(\epsilon\)-branching space} of \(K\).
\end{definition}

\begin{thm} \label{thm:Bminus-realization}
	Let \(\mathbf{1}_\A\in \Top^{\A^-}\) be the unique functor such that \[\mathbf{1}_\A([0])=\varnothing\quad \hbox{and}\quad \mathbf{1}_\A([n])=\{n\} \quad \hbox{for} \quad n\geq 1.\] Let \(\epsilon\in]0,1[\). The functor
	\[
	\widehat N_\epsilon:(\A^-)^{\op}\Set\longrightarrow\Top
	\]
	is a realization functor over \(\mathbf{1}_\A\) for \(\Top\) equipped with the m-model structure or with the h-model structure.
\end{thm}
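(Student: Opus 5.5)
We check the two conditions of Definition~\ref{def:realization} for \(A=N(\epsilon)\in\Top^{\A^-}\) and \(I=\mathbf{1}_\A\). The natural transformation \(N(\epsilon)\to\mathbf{1}_\A\) is forced. At \([0]\) it is the identity of \(\varnothing\). At \([n]\) with \(n\geq 1\) it is the unique map \(N_n(\epsilon)\to\{n\}\). It is natural because \(\mathbf{1}_\A\) sends every morphism between objects of positive degree to the unique map of singletons; \(\mathbf{1}_\A\) is a functor on \(\A^-\) at all only because \(\A^-([n],[0])=\varnothing\) for \(n\geq1\).

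\textbf{Objectwise trivial fibration.} For \(n\geq1\), \(N_n(\epsilon)\) is contractible by \cite[Proposition~4.7]{cubical-branching}. Hence \(N_n(\epsilon)\to\{n\}\) is a weak homotopy equivalence, and even a homotopy equivalence. In both the m- and h-model structures every space is fibrant, so this map is a fibration, and therefore a trivial fibration. For the h-structure one needs a homotopy equivalence onto a point, which contractibility gives. For the m-structure, weak homotopy equivalences suffice. At \([0]\) the map is an identity. This gives condition (2).

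\textbf{Cofibration condition.} This is the main step. By Proposition~\ref{prop:lower-degree-part} and colimit preservation, \(\widehat N_\epsilon(\partial\A^-[n])\) is the coend over \(\A^-_{<n}\) of \(\A^-([m],[n])\cdot N_m(\epsilon)\). It maps into \(\widehat N_\epsilon(\A^-[n])=N_n(\epsilon)\) by \([f,u]\mapsto\T(f)u\). I would identify this image with the closed subspace \(\partial N_n(\epsilon)\) of short natural paths contained in a proper face of \([0,1]^n\) through \(0_n\). Such a face is \(\{x_i=0\}\), which corresponds to \(\delta_i^0\).

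To do this I would use Lemma~\ref{lem:Aminus-boundary}, which gives \(\partial\A^-[n]\cong\LA^-(\partial\Sq^-[n])\). Consequently
\[
\widehat N_\epsilon(\partial\A^-[n])\cong\int^{[m]\in\Sq^-}\partial\Sq^-[n]([m])\cdot N_m(\epsilon),
\]
which is the branching-type construction for the precubical-style presheaf \(\partial\Sq^-[n]\). By the colimit-preservation result \cite[Theorem~5.5]{cubical-branching}, or by a direct computation with the faces \(\delta^0\) of \([0,1]^n\), this coend is homeomorphic to the union of the \(N_{n-1}(\epsilon)\)'s embedded along the faces \(x_i=0\), glued along their intersections. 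That union is \(\partial N_n(\epsilon)\subseteq N_n(\epsilon)\). So the map is a closed inclusion of compact metrizable spaces.

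It remains to see that this inclusion is a cofibration.
\begin{itemize}
\item In the h-structure, cofibrations are closed Hurewicz cofibrations, so it suffices to show \((N_n(\epsilon),\partial N_n(\epsilon))\) is an NDR pair. I expect this to follow from the description of \(N_n(\epsilon)\) in \cite{cubical-branching}, since the subspace is cut out by the vanishing of the function \(\phi\mapsto\min_i\phi_i(\epsilon)\).
\item In the m-structure, cofibrations are h-cofibrations between spaces of the homotopy type of CW-complexes. I would check the latter via compact metrizable ANR arguments, or by invoking the corresponding statement already proven in \cite{cubical-branching}, where the map \(\partial N_n(\epsilon)\subseteq N_n(\epsilon)\) arises in the precubical setting.
\end{itemize}

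I expect this last point, that the boundary inclusion of short natural paths is a genuine (m- or h-) cofibration, to be the main obstacle. My first attempt would be to reduce it to the precubical case via Lemma~\ref{lem:Aminus-boundary}, so that known results of \cite{cubical-branching} apply verbatim.
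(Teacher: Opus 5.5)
Your proposal is correct and follows the paper's proof: contractibility of \(N_n(\epsilon)\) gives the objectwise trivial fibration onto \(\mathbf{1}_\A\), and Lemma~\ref{lem:Aminus-boundary} identifies \(\widehat N_\epsilon(\partial\A^-[n])\) with the precubical boundary subspace of \(N_n(\epsilon)\). For the last step the paper takes exactly your fallback, citing \cite[Proposition~6.1]{cubical-branching} for the m-cofibration and deducing the h-cofibration from it, so the NDR and ANR arguments you sketch are not needed.
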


\begin{proof}
	For \(n=0\), the Yoneda lemma gives
	\[
	\widehat N_\epsilon(\A^- [0])\cong N_0(\epsilon)=\varnothing
	=1_{\A}([0]),
	\]
	so the canonical map
	\[
	\widehat N_\epsilon(\A^- [0])\longrightarrow 1_{\A}([0])
	\]
	is an isomorphism. For \(n\geq 1\), the Yoneda lemma gives
	\[
	\widehat N_\epsilon(\A^-[n])\cong N_n(\epsilon).
	\]
	The space \(N_n(\epsilon)\) is contractible. Hence the canonical map
	\[
	\widehat N_\epsilon(\A^-[n])\longrightarrow \mathbf{1}_\A([n])
	\]
	is a trivial h-fibration and a trivial m-fibration for \(n\geq 1\). Together with the case \(n=0\), this proves condition~(2) of Definition~\ref{def:realization}. By Theorem~\ref{thm:Aminus-representablecofibrant}, the category \(\A^-\) has cofibrant representables. It remains to verify the cofibration condition. By Lemma~\ref{lem:Aminus-boundary},
	\[
	\LA^-(\partial\Sq^-[n])\cong \partial\A^-[n].
	\]
	For \(n=0\), both sides are empty and the corresponding map is the identity of
	\(\varnothing\). Assume now \(n\geq 1\). Then \(\widehat N_\epsilon(\partial\A^-[n])\) is identified with the corresponding boundary subspace of \(N_n(\epsilon)\). The inclusion
	\[
	\widehat N_\epsilon(\partial\A^-[n])
	\subseteq
	\widehat N_\epsilon(\A^-[n])
	\]
	is an m-cofibration by \cite[Proposition~6.1]{cubical-branching}, and therefore also an h-cofibration. Thus \(\widehat N_\epsilon\) is a realization functor over \(\mathbf{1}_\A\).
\end{proof}

\begin{cor} \label{cor:Bminus-invariance}
	Let \(\epsilon,\epsilon'\in]0,1[\). There exists a zigzag of natural transformations
	\[
	B^-_\epsilon \;\longleftarrow\; \bullet \;\longrightarrow\; B^-_{\epsilon'}
	\]
	such that, for every cofibrant \(\A\)-set \(K\), the two maps are homotopy equivalences between m-cofibrant spaces.
\end{cor}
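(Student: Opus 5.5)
The plan is to apply Theorem~\ref{thm:comparison-realization-2} to the c-direct category \(\A^-\) with \(\mathcal M=\Top\), and then transport the conclusion to \(\A\)-sets through the restriction functor \(\rho_\A\).

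\emph{Step 1: setting up the comparison.} By Theorem~\ref{thm:Aminus-representablecofibrant}, the category \(\A^-\) has cofibrant representables. By Theorem~\ref{thm:Bminus-realization}, both \(\widehat N_\epsilon\) and \(\widehat N_{\epsilon'}\) are realization functors over the same object \(\mathbf{1}_\A\) of \(\Top^{\A^-}\). I will use the m-model structure on \(\Top\), which is accessible, so the standing assumption on \(\mathcal M\) holds. Theorem~\ref{thm:comparison-realization-2} then produces an object \(C\in\Top^{\A^-}\) such that \(\wh C\) is a realization functor over \(\mathbf{1}_\A\). It also produces natural maps \(\mu:C\to N(\epsilon)\) and \(\nu:C\to N(\epsilon')\) over \(\mathbf{1}_\A\). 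For every cofibrant \(\A^-\)-set \(H\), the induced maps
\[
\widehat N_\epsilon(H)\longleftarrow \wh C(H)\longrightarrow \widehat N_{\epsilon'}(H)
\]
are weak equivalences between m-cofibrant spaces.

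\emph{Step 2: transport to \(\A\)-sets.} Precompose everything with \(\rho_\A\) and set \(\bullet=\wh C\circ\rho_\A\). The maps \(\wh\mu_{\rho_\A K}\) and \(\wh\nu_{\rho_\A K}\) are natural in \(K\), because \(\wh\mu\) and \(\wh\nu\) are natural in their presheaf argument. By Definition~\ref{def:branching-space}, \(B^-_\epsilon=\widehat N_\epsilon\circ\rho_\A\), so this gives the required zigzag of natural transformations. If \(K\) is a cofibrant \(\A\)-set, Lemma~\ref{lem:rho-preserves-cofibrant} shows that \(\rho_\A K\) is a cofibrant \(\A^-\)-set. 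Step~1 then shows that both maps are m-weak equivalences between m-cofibrant spaces.

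\emph{Step 3: upgrading to homotopy equivalences.} In the m-model structure (Cole's mixed structure), the weak equivalences are the weak homotopy equivalences. Every object is fibrant, and cofibrant objects have the homotopy type of CW-complexes. So an m-weak equivalence between m-cofibrant spaces is a map between fibrant–cofibrant objects, and Whitehead's theorem in model-categorical form makes it a homotopy equivalence.

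The main point to check carefully is Step 3, together with the compatibility of model structures in Step 1. Theorem~\ref{thm:Bminus-realization} is stated for both the m- and h-structures, and Theorems~\ref{thm:comparison-realization-1} and \ref{thm:comparison-realization-2} require the projective model structures on \(\Top^{(\A^-)_{=\delta}}\) to exist. I would verify this by noting that the m-structure on \(\Delta\)-generated spaces is accessible, hence cofibrantly generated up to the usual accessibility results. As a fallback, one could run the argument in the h-structure, whose weak equivalences are already homotopy equivalences. In that case one must separately confirm m-cofibrancy, which follows because the cellular attaching maps are m-cofibrations by \cite[Proposition~6.1]{cubical-branching} and \(\wh C\) sends generating cofibrations to m-cofibrations by Theorem~\ref{thm:proj-cof-suff-cond} and Lemma~\ref{lem:proj-inj}.
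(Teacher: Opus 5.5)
Your proposal is correct and follows the paper's proof. Both apply Theorem~\ref{thm:Bminus-realization} together with Theorem~\ref{thm:comparison-realization-2} to \(\A^-\), transport along \(\rho_\A\) via Lemma~\ref{lem:rho-preserves-cofibrant}, and upgrade weak homotopy equivalences between m-cofibrant spaces to homotopy equivalences; the paper does this last step by citing Cole's Corollary~3.4 \cite{mixed-cole}, whereas you use the bifibrant Whitehead argument, which works because every space is m-fibrant. Your h-structure fallback is unnecessary, and as stated it only makes \(\wh C\) send boundary inclusions to h-cofibrations, not m-cofibrations, so it gives no m-cofibrancy of the middle term and is best dropped.
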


\begin{proof}
	By Theorem~\ref{thm:Bminus-realization} and Theorem~\ref{thm:comparison-realization-2} applied to the c-direct category \(\A^-\), the corresponding statement holds for every cofibrant \(\A^-\)-set. If \(K\) is cofibrant as an \(\A\)-set, then \(\rho_\A K\) is cofibrant as an \(\A^-\)-set by Lemma~\ref{lem:rho-preserves-cofibrant}. Since
	\[
	B^-_\epsilon(K)=\widehat N_\epsilon(\rho_\A K),
	\]
	the result follows. Finally, weak homotopy equivalences between m-cofibrant spaces are homotopy equivalences by \cite[Corollary~3.4]{mixed-cole}.
\end{proof}

Corollary~\ref{cor:Bminus-invariance} applies in particular to every free \(\A\)-set of the form \(\LA(K)\), where \(K\) is a precubical set. Indeed, every precubical set is cofibrant as a \(\Sq\)-set, and, since \(\A\)
is thick, the functor \(\LA\) sends the generating cofibrations
\[
\partial\Sq[n]\subseteq \Sq[n]
\]
to
\[
\partial\A[n]\subseteq \A[n].
\]
Thus \(\LA(K)\) is cofibrant as an \(\A\)-set.

\section{No colimit preservation for the naive definition}
\label{sec:old-branching}

In this section, \(\A\) is a thick category of cubes. For an \(\A\)-set \(K\), write \[K_n=K([n])\] for the set of \(n\)-cubes of \(K\). The \textit{initial vertex} of an \(n\)-cube \(c\) of an \(\A\)-set \(K\) is the vertex \[c^-=c(0_n).\] When \(c\in K_0\), one has \(c^-=c\). For \(\alpha\in K_0\), let \[\mathcal{C}^-_\alpha(K)=\{c\in K\mid \dim(c)\geq 1 \,\hbox{and}\, c^-=\alpha\}.\]

\begin{definition} [{\cite[Definition~4.2]{cubical-branching}}] \label{def:old-branching-space}
	Let \(\epsilon\in ]0,1[\). Let \(K\) be an \(\A\)-set. The \textit{naive \(\epsilon\)-branching space} \(\Pminus_\A(K,\epsilon)\) of \(K\) is the space 
	\[
	\Pminus_\A(K,\epsilon) = \coprod_{\alpha\in K^0} (\Pminus_\A)_\alpha(K,\epsilon)
	\]
	where 
	\[
	(\Pminus_\A)_\alpha(K,\epsilon) = \big\{|c|_{\geom}\circ\phi\mid c\in \mathcal{C}^-_\alpha(K)\hbox{ and }\phi\in N_{\dim(c)}(\epsilon)\big\}.
	\]
	It is equipped with the \(\Delta\)-Kelleyfication of the compact-open topology. 
\end{definition}

\begin{proposition}
	Let \(\epsilon\in ]0,1[\). The assignment \(K\mapsto \Pminus_\A(K,\epsilon)\) induces a functor from \(\A\)-sets to topological spaces.
\end{proposition}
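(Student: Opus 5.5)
The plan is to define the functor on morphisms by postcomposition with the geometric realization, and then check well-definedness, continuity and functoriality. Let \(h:K\to L\) be a map of \(\A\)-sets and write \(|h|_{\geom}:|K|_{\geom}\to|L|_{\geom}\) for the induced continuous map. Every element of \(\Pminus_\A(K,\epsilon)\) has the form \(|c|_{\geom}\circ\phi\), where \(c\) is an \(n\)-cube of \(K\) with \(n\geq 1\), which by Yoneda we view as a map \(c:\A[n]\to K\), and \(\phi\in N_n(\epsilon)\). I would set
\[
\Pminus_\A(h,\epsilon)\bigl(|c|_{\geom}\circ\phi\bigr)=|h|_{\geom}\circ|c|_{\geom}\circ\phi=|h(c)|_{\geom}\circ\phi .
\]
The second equality is the functoriality of \(|-|_{\geom}\) applied to the composite \(h\circ c:\A[n]\to L\), which is the cube \(h(c)\in L_n\).

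The first check is that the image lies in the right summand. Since \(h\) is natural, \(h(c)^-=h(c)(0_n)=h(c(0_n))=h(c^-)\), and \(\dim h(c)=\dim c\geq 1\). Hence \(h(c)\in\mathcal C^-_{h(\alpha)}(L)\) whenever \(c\in\mathcal C^-_\alpha(K)\), so the summand indexed by \(\alpha\) is sent into the summand indexed by \(h(\alpha)\).

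The map is well defined because the formula \(\gamma\mapsto|h|_{\geom}\circ\gamma\) depends only on the path \(\gamma\), not on the pair \((c,\phi)\) chosen to represent it. This is the point where one might worry about transverse identifications, since several pairs can give the same path. Defining the map as plain postcomposition on paths removes that issue. For the same reason, identities and composites are preserved at once, because \(|-|_{\geom}\) is a functor.

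The step I expect to need the most care is continuity. On each summand the map is the restriction of postcomposition \(|h|_{\geom}\circ-\) on the path spaces, going from \(\mathrm{TOP}([0,\epsilon],|K|_{\geom})\) to \(\mathrm{TOP}([0,\epsilon],|L|_{\geom})\). Postcomposition by a continuous map is continuous for the compact-open topology. The \(\Delta\)-Kelleyfication \(k\) is a functor, so it turns this continuous map into a continuous map between the Kelleyfied spaces. Finally, a summand is topologized as a subspace, with the \(\Delta\)-Kelleyfication of the relative compact-open topology. A map into such a subspace is continuous as soon as it is continuous into the ambient space and its image is contained in the subspace, which was checked above. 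Combining the summands gives a continuous map out of the coproduct, and this completes the construction of the functor.
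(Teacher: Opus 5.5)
Your proof is correct and follows the same approach as the paper. In both, the key point is that \(h(c)\in\mathcal{C}^-_{h(\alpha)}(L)\) with \(\dim h(c)=\dim c\), so postcomposition with \(|h|_{\geom}\) sends the \(\alpha\)-summand into the \(h(\alpha)\)-summand; you also spell out the well-definedness, functoriality and continuity checks (postcomposition in the compact-open topology, then \(\Delta\)-Kelleyfication), which the paper leaves implicit.
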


\begin{proof} Let \(f:K\to L\) be a map of \(\A\)-sets. For all \(c\in\mathcal{C}^-_\alpha(K)\), \(f(c)\in \mathcal{C}^-_{f(\alpha)}(L)\) and \(\dim(c)=\dim(f(c))\). This proves the claim, since \(0<\epsilon<1\). 
\end{proof}

Let
\begin{align*}
	& c:[2]\longrightarrow [2],\qquad c(x_1,x_2)=(x_2,x_1),\\[1mm]
	& \gamma^+:[2]\longrightarrow [2],
	\qquad
	\gamma^+(x_1,x_2)=(\max(x_1,x_2),\min(x_1,x_2)),\\[1mm]
	& \gamma^-:[2]\longrightarrow [2],
	\qquad
	\gamma^-(x_1,x_2)=(\min(x_1,x_2),\max(x_1,x_2)).\\
\end{align*}
Recall that the \textit{rank} of a vertex
\(u=(u_1,\dots,u_n) \in [n]\) is
\[
r(u)=u_1+\cdots+u_n.
\]
For \(1\leq i\leq n\), let
\[
e_i=(0,\dots,0,1,0,\dots,0)
\]
be the vertex whose unique nonzero coordinate is in position \(i\). A \(\Sq\)-map
\(h:[2]\to[n]\) satisfying \(h(0,0)=0_n\) is necessarily of the form
\[
h(x,y)=
(0,\dots,0,x,0,\dots,0,y,0,
\dots,0)
\]
for a unique pair \(1\leq p<q\leq n\).  Hence
\[
h(1,0)=e_p,
\qquad
h(0,1)=e_q,
\qquad
p<q.
\]

\begin{lem} \label{lem:2face-restriction}
	Let
	\(
	f:[n]\longrightarrow [n]
	\)
	be a cotransverse map which is not the identity.  Then there exists a
	\(\Sq\)-map
	\[
	g:[2]\longrightarrow [n]
	\]
	such that the composite
	\[
	fg:[2]\longrightarrow [n]
	\]
	does not belong to the box category.
\end{lem}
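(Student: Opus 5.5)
The idea is to test \(f\) only on the \(2\)-dimensional faces of \([n]\) through the initial vertex \(0_n\), using the description recalled just above of the \(\Sq\)-maps \(h:[2]\to[n]\) with \(h(0,0)=0_n\). The key point is that a cotransverse endomorphism is determined by what it does on the atoms \(e_1,\dots,e_n\), and it must be the identity as soon as it permutes them in an increasing way.

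\textbf{Step 1: \(f\) induces a map on atoms.} Note first that \(n\geq 2\), since the only endomorphism of \([0]\) or of \([1]\) is the identity. By Proposition~\ref{prop:cotransverse-endo-endpoint-preserving}, \(f(0_n)=0_n\). Since \(\dd(0_n,e_i)=1\) and \(f\) is cotransverse, we get \(\dd(0_n,f(e_i))=1\). Hence \(f(e_i)=e_{\sigma(i)}\) for a well-defined map \(\sigma:\{1,\dots,n\}\to\{1,\dots,n\}\).

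\textbf{Step 2: if \(\sigma\) is strictly increasing, then \(f\) is the identity.} Suppose \(\sigma(i)<\sigma(j)\) whenever \(i<j\). Then \(\sigma=\id\), so \(f\) fixes \(0_n\) and every atom. Assume \(f\neq\id\) and choose a vertex \(x\) of minimal rank \(r\) with \(f(x)\neq x\); then \(r\geq 2\).
\begin{itemize}
\item Since \(r\geq 2\), \(x\) has two distinct lower neighbours \(x-e_i\) and \(x-e_j\). Both have rank \(r-1\), so both are fixed by \(f\).
\item Cotransversality gives \(\dd(x-e_i,f(x))=1\) and \(\dd(x-e_j,f(x))=1\). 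Hence \(f(x)\) has rank \(r\) and lies above both \(x-e_i\) and \(x-e_j\).
\item Therefore \(f(x)\geq (x-e_i)\vee(x-e_j)=x\). Since \(f(x)\) and \(x\) have the same rank, \(f(x)=x\), a contradiction.
\end{itemize}
This adjacency-propagation argument is the only real content of the proof, and I expect it to be the step needing care. One must check that rank and adjacency pin \(f(x)\) down exactly, which is guaranteed by the explicit form of \(\dd\).

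\textbf{Step 3: construct \(g\).} Since \(f\neq\id\), Step 2 shows that \(\sigma\) is not strictly increasing. So there exist \(p<q\) with \(\sigma(p)\geq\sigma(q)\). Let \(g:[2]\to[n]\) be the \(\Sq\)-map with \(g(x,y)\) having \(x\) in position \(p\), \(y\) in position \(q\), and zeros elsewhere.

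Then \(fg(0,0)=0_n\), \(fg(1,0)=e_{\sigma(p)}\) and \(fg(0,1)=e_{\sigma(q)}\). Suppose \(fg\) were a \(\Sq\)-map. Since it preserves the initial vertex, the description recalled before the lemma would give \(fg(1,0)=e_{p'}\) and \(fg(0,1)=e_{q'}\) with \(p'<q'\). This would force \(\sigma(p)<\sigma(q)\), which is false. Hence \(fg\notin\Sq\).
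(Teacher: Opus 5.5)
Your proof is correct and follows the paper's argument: atoms go to atoms via a map \(\sigma\), \(\sigma\) cannot be strictly increasing unless \(f=\id\), and an inverted pair \(p<q\) gives the face \(g\) with \(fg\notin\Sq\). The only difference is in showing that \(\sigma=\id\) forces \(f=\id\): the paper compares supports (\(\supp(u)\subseteq\supp(f(u))\) by order preservation, with equal cardinalities by rank preservation), whereas your minimal-rank argument uses two fixed lower neighbours, their join, and adjacency preservation, and is equally valid.
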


\begin{proof}
	Since \(f\) is cotransverse, it is order-preserving and preserves the rank.  In particular, it sends vertices of rank \(1\) to vertices of rank \(1\).  Therefore there is a unique map
	\[
	\alpha:\{1,\dots,n\}\longrightarrow \{1,\dots,n\}
	\]
	such that
	\[
	f(e_i)=e_{\alpha(i)}
	\qquad (1\leq i\leq n).
	\]
	We first prove that \(\alpha\neq\id_{\{1,\dots,n\}}\).  Indeed, suppose that \(\alpha=\id_{\{1,\dots,n\}}\).  Let \(u\in[n]\), and write
	\[
	\supp(u)=\{i\mid u_i=1\}.
	\]
	For every \(i\in\supp(u)\), one has \(e_i\leq u\).  Since \(f\) is order-preserving,
	\[
	e_i=f(e_i)\leq f(u).
	\]
	Thus
	\[
	\supp(u)\subseteq \supp(f(u)).
	\]
	Since \(f\) preserves the rank, we also have
	\[
	|f(u)|=|u|.
	\]
	Equivalently,
	\[
	\#\supp(f(u))=\#\supp(u).
	\]
	The inclusion of supports is therefore an equality.  Hence \(\supp(f(u))=\supp(u)\), and so \(f(u)=u\).  Since this holds for every vertex \(u\), we obtain \(f=\id_{[n]}\), contradicting the hypothesis. Consequently,
	\[
	\alpha\neq\id_{\{1,\dots,n\}}.
	\]
	Since \(\alpha\) is a non-identity endomap of the finite ordered set \(\{1,\dots,n\}\), there exist \(i<j\) such that
	\[
	\alpha(i)\geq \alpha(j).
	\]
	Indeed, if \(\alpha(i)<\alpha(j)\) for every \(i<j\), then \(\alpha\) would be strictly increasing.  A strictly increasing endomap of \(\{1,\dots,n\}\) is necessarily the identity. Choose such a pair \(i<j\), and let
	\[
	g=g_{ij}:[2]\longrightarrow[n]
	\]
	be the \(\Sq\)-map defined by putting the first variable in the \(i\)-th coordinate and the second variable in the \(j\)-th coordinate:
	\[
	g(x,y)=(0,\dots,0,x,0,\dots,0,y,0,\dots,0).
	\]
	Then
	\[
	fg(0,0)=f(0_n)=0_n,
	\]
	and
	\[
	fg(1,0)=f(e_i)=e_{\alpha(i)},
	\qquad
	fg(0,1)=f(e_j)=e_{\alpha(j)}.
	\]
	Suppose, for a contradiction, that
	\[
	fg\in\Sq([2],[n]).
	\]
	Since \(fg(0,0)=0_n\), the preceding description of \(\Sq\)-maps implies that there exist indices \(p<q\) such that
	\[
	fg(1,0)=e_p,
	\qquad
	fg(0,1)=e_q.
	\]
	Therefore
	\[
	p=\alpha(i),
	\qquad
	q=\alpha(j),
	\]
	and hence
	\[
	\alpha(i)<\alpha(j).
	\]
	This contradicts the choice of \(i<j\) with
	\(\alpha(i)\geq\alpha(j)\).  Thus
	\[
	fg\notin\Sq([2],[n]),
	\]
	as required.
\end{proof}

\begin{lem} \label{lem:thick-2dim-case}
	Assume that \(\Sq\subsetneq \A\). Then 
	\[
	\{c,\gamma^+,\gamma^-\} \cap \A([2],[2]) \neq \varnothing.
	\]
\end{lem}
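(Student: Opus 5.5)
Since \(\Sq\subsetneq\A\) and both categories have the same objects, we can pick a morphism \(f:[m]\to[n]\) of \(\A\) that is not in \(\Sq\). Thickness (Definition~\ref{def:thickness}) gives the cotransverse--cocubical factorization \(f=\delta\psi\) with \(\delta\in\Sq\) and \(\psi\in\A([m],[m])\). Because \(f\notin\Sq\), the endomorphism \(\psi\) is not the identity. We therefore get a nonidentity cotransverse endomorphism \(\psi\) of \([m]\) lying in \(\A\), necessarily with \(m\geq 2\): for \(m\leq 1\) the only cotransverse endomorphism of \([m]\) is the identity, by Proposition~\ref{prop:cotransverse-endo-endpoint-preserving}.

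Next we use Lemma~\ref{lem:2face-restriction} with \(\psi\). It provides \(g\in\Sq([2],[m])\) such that \(\psi g\notin\Sq\). The composite \(\psi g\) is a morphism of \(\A\), since \(\Sq\subset\A\). Thickness applies again: write \(\psi g=\delta'\theta\) with \(\delta'\in\Sq([2],[m])\) and \(\theta\in\A([2],[2])\). Since \(\psi g\notin\Sq\), we have \(\theta\neq\id_{[2]}\). So \(\A([2],[2])\) contains a nonidentity cotransverse endomorphism of \([2]\).

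It remains to classify the cotransverse endomorphisms of \([2]\). By Proposition~\ref{prop:cotransverse-endo-endpoint-preserving}, \(\theta\) fixes \((0,0)\) and \((1,1)\). Strict monotonicity forces \(\theta\) to send the rank-one vertices \((1,0),(0,1)\) to rank-one vertices. Adjacency preservation imposes no further constraint, because every rank-one vertex is adjacent to both \((0,0)\) and \((1,1)\). Hence \(\theta\) is determined by a map \(\{(1,0),(0,1)\}\to\{(1,0),(0,1)\}\), which gives exactly four maps. They are the identity, the swap \(c\), the map with \((1,0),(0,1)\mapsto(1,0)\), and the map with \((1,0),(0,1)\mapsto(0,1)\). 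Checking on vertices, the last two are \(\gamma^+\) and \(\gamma^-\); for example \(\gamma^+(1,0)=\gamma^+(0,1)=(1,0)\). Thus \(\theta\in\{c,\gamma^+,\gamma^-\}\cap\A([2],[2])\), which is the claim.

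The only step needing care is the reduction from dimension \(m\) to dimension \(2\). Lemma~\ref{lem:2face-restriction} handles it, combined with a second application of thickness to extract the level part of \(\psi g\). The final classification is a routine finite check.
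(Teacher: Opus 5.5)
Your proof is correct and follows the paper's argument. Thickness produces a nonidentity endomorphism in \(\A\), Lemma~\ref{lem:2face-restriction} restricts it to a \(2\)-face, and a second application of thickness extracts a nonidentity level part in \(\A([2],[2])\). Your version is slightly tidier than the paper's: you apply thickness at the outset instead of splitting into the cases \(n=2\) and \(m<n\), and you spell out the classification of the four cotransverse endomorphisms of \([2]\), which the paper leaves implicit.
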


\begin{proof}
	Let \(f:[m]\to [n]\in \A\backslash \Sq\). One always has 
	\[
	\A([0],[n]) = \Sq([0],[n]),\quad \A([1],[n]) = \Sq([1],[n])\quad\hbox{for all}\quad n\geq 0\,.
	\] 
	By the preceding observation, one must have \(m\geq 2\). If \(n=2\), then necessarily \(m=2\), and the conclusion follows directly. Assume that \(2\leq m<n\). Since \(\A\) is thick, \(f\) factors as a composite 
	\[
	f:[m]\xlongrightarrow{\phi\in\A} [m] \xlongrightarrow{\delta\in\Sq} [n]\,. 
	\]
	Since \(f\notin \Sq\), one deduces \(\phi\notin \Sq\). Thus we may assume without loss of generality that \(m=n\). By Lemma~\ref{lem:2face-restriction}, there exists a \(\Sq\)-map \(g:[2]\to [m]\) such that 
	\[
	fg\notin \Sq([2],[n])\,.
	\]
	Using thickness again, write \(fg\) as a composite 
	\[
	fg:[2]\xlongrightarrow{\phi\in\A} [2] \xlongrightarrow{\delta\in\Sq} [m]\,. 
	\]
	Then \(\phi\notin\Sq\). Thus \[\phi\in  \{c,\gamma^+,\gamma^-\}\,.\]
	This completes the proof.
\end{proof}

Let \(\theta\in \{c,\gamma^+,\gamma^-\}\). Consider the following coequalizer in \(\A^{\op}\Set\):
\[
\begin{tikzcd}[column sep=3em]
	{\A[2]}
	\arrow[r, shift left=1ex, "\id"]
	\arrow[r, shift right=1ex, "{\A(-,\theta)}"']
	&
	{\A[2]}
	\arrow[r, "q"]
	&
	{Q_\theta}
\end{tikzcd}
\]
In the geometric realization, the quotient \(Q_\theta\) identifies each point \(z\in[0,1]^2\) with \(\theta(z)\).

\begin{lem} \label{lem:switching-three-cases}
	Let \(0<\epsilon<1\).  For each
	\(\theta\in\{c,\gamma^+,\gamma^-\}\), there exist two natural directed paths
	\[
	u,v:[0,\epsilon]\longrightarrow [0,1]^2
	\]
	of natural length \(\epsilon\) such that:
	\begin{enumerate}[label=\textup{(\roman*)}]
		\item \(q u=q v\) in \(\Pminus_\A(Q_\theta,\epsilon)\);
		\item \(u\) and \(v\) are not equivalent for the equivalence relation on
		\(\Pminus_\A(\A[2],\epsilon)\) generated by
		\[
		w\sim \theta w .
		\]
	\end{enumerate}
\end{lem}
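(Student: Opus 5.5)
The plan is to give one explicit pair of natural directed paths that works for all three \(\theta\in\{c,\gamma^+,\gamma^-\}\), and then to check (i) and (ii) directly. Fix \(0<\epsilon<1\) and set \(a=\epsilon/3\). Let \(u,v:[0,\epsilon]\to[0,1]^2\) be
\[
u(t)=\begin{cases}(t,0), & 0\leq t\leq a,\\ (a,t-a), & a\leq t\leq \epsilon,\end{cases}
\qquad
v(t)=\begin{cases}(0,t), & 0\leq t\leq a,\\ (t-a,a), & a\leq t\leq 2a,\\ (a,t-a), & 2a\leq t\leq \epsilon.\end{cases}
\]
Both paths are continuous, coordinatewise nondecreasing and start at \(0_2\). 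They satisfy \(\dd(0_2,u(t))=\dd(0_2,v(t))=t\), so \(u,v\in N_2(\epsilon)\). All coordinates stay at most \(2a<1\). Each path crosses the diagonal exactly at time \(2a\), at the point \((a,a)\). In words, \(v\) is the mirror image of \(u\) up to time \(2a\) and agrees with \(u\) afterwards.

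For (i), I would first use the coequalizer to compute \(|Q_\theta|_{\geom}\). The generating \(2\)-cube \(q(\id_{[2]})\) satisfies \(q(\id)=q(\theta)\), so for every \(z\in[0,1]^2\) the realization has \((q(\id),z)\sim(q(\id),\T(\theta)z)\). Hence \(|q|_{\geom}(z)=|q|_{\geom}(\T(\theta)z)\).

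The explicit formula for \(\T\) gives:
\begin{itemize}
\item \(\T(c)(x_1,x_2)=(x_2,x_1)\);
\item \(\T(\gamma^{+})=(\max,\min)\);
\item \(\T(\gamma^{-})=(\min,\max)\).
\end{itemize}
In all three cases \(\T(\theta)\) identifies \(z\) with its mirror image \(c(z)\), either directly or because \(z\) and \(c(z)\) have the same image. At every time \(t\), either \(v(t)=u(t)\) (for \(t\geq 2a\)) or \(v(t)=c(u(t))\) (for \(t\leq 2a\)). Therefore \(|q|_{\geom}\circ u=|q|_{\geom}\circ v\) pointwise, and these are the same element of \((\Pminus_\A)_{q(0_2)}(Q_\theta,\epsilon)\).

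For (ii), I would compute the relevant composites. For \(\theta=c\):
\begin{itemize}
\item \(\theta u\) starts along the second axis, so \(\theta u\neq v\), since \(v\) ends along the second axis;
\item \(\theta v\) starts along the first axis but ends along the first axis, so \(\theta v\neq u\).
\end{itemize}
For \(\theta=\gamma^{\pm}\), the image of \(\T(\theta)\) lies in one closed half \(\{x_1\geq x_2\}\) or \(\{x_1\leq x_2\}\). Both \(u\) and \(v\) meet the open complement of each half: \(u\) lies below the diagonal before time \(2a\) and above it after, while \(v\) does the opposite. Hence neither \(u\) nor \(v\) lies in the image of \(w\mapsto\theta w\), and \(u\neq v\).

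The step I expect to be the main obstacle is turning these elementary facts into non-equivalence for the equivalence relation generated by \(w\sim\theta w\), rather than merely for one-step relatedness. For \(c\), which is an involution, the classes are exactly \(\{w,cw\}\), so the computations above finish the argument.

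For the idempotents \(\gamma^{\pm}\), a naive transitive closure would identify \(u\) and \(v\) through their common image \(\theta u=\theta v\). I will therefore make precise which relation on \(\Pminus_\A(\A[2],\epsilon)\) is meant: the elements are the composites \(|x|_{\geom}\circ w\) with \(x\) a cube of \(\A[2]\), and \(\theta\) acts on the cube label. I will then show that the class of \(u\) consists of \(u\) together with representatives of the form \(|\theta|_{\geom}\circ w\). The point of that analysis is that none of these representatives can equal \(v\) while being carried by the cube \(\id_{[2]}\) itself, because \(v\) leaves both half-squares. This yields the required failure of the global relation to detect the pointwise identification.
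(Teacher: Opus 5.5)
Your treatment of \(\theta=c\) is correct. It is the paper's argument up to the choice of pair. For every \(t\) you have \(v(t)\in\{u(t),\T(c)u(t)\}\), which gives (i). Since \(c\) is an involution, the classes are \(\{w,cw\}\), and \(v\notin\{u,cu\}\). Your first bullet is garbled: \(cu\) and \(v\) both start along the second axis. They differ at the end, for instance \(cu(\epsilon)=(2a,a)\neq(a,2a)=v(\epsilon)\).

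\textbf{The gap for \(\gamma^{\pm}\).} The genuine gap is the case \(\theta=\gamma^{\pm}\), and the repair you propose is not available. By Definition~\ref{def:old-branching-space}, \(\Pminus_{\A}(\A[2],\epsilon)\) is a set of paths in \(|\A[2]|_{\geom}=[0,1]^2\), and \(\Pminus_{\A}(\A(-,\theta),\epsilon)\) acts by \(w\mapsto\T(\theta)\circ w\). An element carries no cube label: \(|\theta|_{\geom}\circ w\) and \(|\id_{[2]}|_{\geom}\circ(\T(\theta)\circ w)\) are literally the same point. So the relation on labeled pairs that you intend to analyze is not the relation in (ii). On labeled pairs, (ii) would hold trivially for any two distinct paths. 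Moreover, that labeled set is not the space whose coequalizer is tested in Theorem~\ref{thm:no-colimit-preserving}.

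\textbf{No choice of \(u,v\) can work.} Because \(\T(\theta)\) is idempotent, the equivalence relation generated by \(w\sim\T(\theta)\circ w\) is exactly \(w\sim w'\iff\T(\theta)\circ w=\T(\theta)\circ w'\). The same holds for points of \([0,1]^2\). Thus \(|Q_\theta|_{\geom}\), the quotient of \([0,1]^2\) by \(z\sim\T(\theta)z\), is the compact Hausdorff retract \(\T(\theta)([0,1]^2)\), with \(|q|_{\geom}\) corresponding to \(\T(\theta)\). No further identification occurs, whichever \(\Top\) is used. Hence \(qu=qv\) holds if and only if \(\T(\theta)\circ u=\T(\theta)\circ v\), that is, if and only if \(u\sim v\). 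So conditions (i) and (ii) are mutually exclusive. Your own observation \(\T(\gamma^{\pm})\circ\T(c)=\T(\gamma^{\pm})\), which gives (i), is exactly what forces \(\gamma^{\pm}u=\gamma^{\pm}v\).

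\textbf{The paper's proof.} It has the same defect. For \(\gamma^{\pm}\) it asserts that the class of \(u\) is \(\{u,\gamma^{\pm}u\}\), which is false for an idempotent. For its own pairs one checks directly that \(\gamma^{+}u=\gamma^{+}v\) and \(\gamma^{-}u=\gamma^{-}v\). So only the case \(\theta=c\) of the lemma holds. Consequently, the proof of Theorem~\ref{thm:no-colimit-preserving} does not cover thick categories of cubes containing \(\gamma^{+}\) or \(\gamma^{-}\) but not \(c\). An example is the category whose morphisms are the box maps together with the composites \(\delta\gamma^{+}\) for \(\delta\in\Sq([2],[n])\).
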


\begin{proof}
	Put \(a=\varepsilon/3\). First suppose that \(\theta=c\) or \(\theta=\gamma^+\). Define
	\[
	u(t)=
	\begin{cases}
		(0,t), & 0\leq t\leq a,\\
		(t-a,a), & a\leq t\leq 2a,\\
		(a,t-a), & 2a\leq t\leq 3a,
	\end{cases}
	\]
	and
	\[
	v(t)=
	\begin{cases}
		(0,t), & 0\leq t\leq a,\\
		(t-a,a), & a\leq t\leq 3a.
	\end{cases}
	\]
	Both paths start from \((0,0)\), are coordinatewise nondecreasing, and satisfy
	\[
	t=u_1(t)+u_2(t)=v_1(t)+v_2(t)
	\]
	for all \(0\leq t\leq \varepsilon\). Hence they are natural directed paths of natural length \(\varepsilon\). For \(0\leq t\leq 2a\), one has \(u(t)=v(t)\). For \(2a\leq t\leq 3a\), one has
	\[
	c(u(t))=c(a,t-a)=(t-a,a)=v(t)
	\]
	and, since \(t-a\geq a\),
	\[
	\gamma^+(u(t))=\gamma^+(a,t-a)=(t-a,a)=v(t).
	\]
	Thus \(qu=qv\) in \(P_{\A^-}^-(Q_\theta,\varepsilon)\). It remains to check that \(u\) and \(v\) are not equivalent in the coequalizer of path spaces. If \(\theta=c\), then \(c^2=\id\), so the equivalence class of \(u\) generated by \(w\sim cw\) is exactly
	\[
	\{u,cu\}.
	\]
	The path \(v\) is neither \(u\) nor \(cu\): it differs from \(u\) on the last third, and it differs from \(cu\) on the first third. Hence \(u\) and \(v\) are not equivalent. If \(\theta=\gamma^+\), then \((\gamma^+)^2=\gamma^+\), so the equivalence class of \(u\) generated by \(w\sim\gamma^+w\) is exactly
	\[
	\{u,\gamma^+u\}.
	\]
	Again, \(v\) is neither \(u\) nor \(\gamma^+u\). Hence \(u\) and \(v\) are not equivalent. It remains to treat \(\theta=\gamma^-\). Define
	\[
	u(t)=
	\begin{cases}
		(t,0), & 0\leq t\leq a,\\
		(a,t-a), & a\leq t\leq 2a,\\
		(t-a,a), & 2a\leq t\leq 3a,
	\end{cases}
	\]
	and
	\[
	v(t)=
	\begin{cases}
		(t,0), & 0\leq t\leq a,\\
		(a,t-a), & a\leq t\leq 3a.
	\end{cases}
	\]
	Again both paths start from \((0,0)\), are coordinatewise nondecreasing, and satisfy
	\[
	t=u_1(t)+u_2(t)=v_1(t)+v_2(t)
	\]
	for all \(0\leq t\leq \varepsilon\). Hence they are natural directed paths of natural length \(\varepsilon\). For \(0\leq t\leq 2a\), one has \(u(t)=v(t)\). For \(2a\leq t\leq 3a\), one has
	\[
	\gamma^-(u(t))=\gamma^-(t-a,a)=(a,t-a)=v(t),
	\]
	because \(t-a\geq a\). Hence \(qu=qv\) in \(P_{\A^-}^-(Q_{\gamma^-},\varepsilon)\). Finally, since \((\gamma^-)^2=\gamma^-\), the equivalence class of \(u\) generated by \(w\sim\gamma^-w\) is exactly
	\[
	\{u,\gamma^-u\}.
	\]
	The path \(v\) is neither \(u\) nor \(\gamma^-u\): it differs from \(u\) on the last third, and it differs from \(\gamma^-u\) on the first third. Therefore \(u\) and \(v\) are not equivalent.
\end{proof}

\begin{thm} \label{thm:no-colimit-preserving}
	Let \(\A\) be a thick category of cubes. The following two conditions are equivalent: 
	\begin{enumerate}
		\item \(\A=\Sq\),
		\item For every \(0<\epsilon<1\), the naive \(\epsilon\)-branching-space functor
		\[
		\Pminus_\A(-,\epsilon):\A^{\op}\Set\longrightarrow\Top
		\]
		is colimit-preserving.
	\end{enumerate}
\end{thm}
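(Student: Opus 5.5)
The two implications will be handled separately.

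For \((2)\Rightarrow(1)\) I argue by contraposition. Suppose \(\Sq\subsetneq\A\). By Lemma~\ref{lem:thick-2dim-case} there is some \(\theta\in\{c,\gamma^+,\gamma^-\}\) lying in \(\A([2],[2])\). Form the coequalizer \(Q_\theta\) of \(\id\) and \(\A(-,\theta)\) on \(\A[2]\).

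If \(\Pminus_\A(-,\epsilon)\) preserved colimits, then \(\Pminus_\A(Q_\theta,\epsilon)\) would be the coequalizer in \(\Top\) of \(\id\) and \(\Pminus_\A(\A(-,\theta),\epsilon)\) on \(\Pminus_\A(\A[2],\epsilon)\). The underlying set of a coequalizer in \(\Top\) is the set-theoretic coequalizer, since the forgetful functor to sets is a left adjoint for \(\Delta\)-generated spaces and Kelleyfication does not change points.

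I then need to know what \(\Pminus_\A(\A(-,\theta),\epsilon)\) does to paths. It sends \(|c|_{\geom}\circ\phi\) to \(|\theta\circ c|_{\geom}\circ\phi\), which restricts on the top cell \(\id_{[2]}\) to \(w\mapsto \T(\theta)w\). I must also account for paths carried by lower cells of \(\A[2]\), namely those with initial vertex \(0_2\). Paths in a 1-dimensional cell \(c\) are paths in \(|\A[2]|_{\geom}\) already, so \(\Pminus_\A(\A[2],\epsilon)\) is just a set of paths in the square. On these, the generated relation is exactly \(w\sim\T(\theta)w\); here \(\T(\theta)\) is the geometric map, and I identify it with \(\theta\) as Lemma~\ref{lem:switching-three-cases} does.

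Now take the paths \(u,v\) of Lemma~\ref{lem:switching-three-cases}. Part (i) says their images in \(\Pminus_\A(Q_\theta,\epsilon)\) coincide. Part (ii) says they are distinct in the set-level coequalizer. Hence the comparison map from the coequalizer to \(\Pminus_\A(Q_\theta,\epsilon)\) is not injective, so the functor is not colimit-preserving for that \(\epsilon\). This contradicts (2).

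For \((1)\Rightarrow(2)\), when \(\A=\Sq\) the naive functor is exactly the branching space of \cite[Definition~4.2]{cubical-branching}. It preserves colimits by \cite[Theorem~5.5]{cubical-branching}.

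The main obstacle is making the identification in the first implication rigorous: the comparison map must be shown non-injective at the level of points. Two points need care. First, a coequalizer in \(\Top\) (either the \(\Delta\)-generated or the \(\Delta\)-Hausdorff variant) has the expected underlying set. In the \(\Delta\)-Hausdorff case, the reflection could further identify points, so I would check that the relevant quotient is already \(\Delta\)-Hausdorff, or else argue via a continuous separating map. Second, in \(Q_\theta\) the induced map \(q\) on paths realizes the identification \(z\sim\theta(z)\), which is what gives (i). I would settle the first point by noting that the \(\Delta\)-Hausdorff reflection only collapses points not separated by continuous maps to \(\Delta\)-Hausdorff spaces. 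Then \(u\) and \(v\) stay distinct, because the quotient of the compact metrizable path space by the closed finite relation \(w\sim\theta w\) is Hausdorff.
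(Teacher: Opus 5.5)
Your argument is the paper's own, and it works for \(\theta=c\). It has a genuine gap when the only available \(\theta\) is idempotent, \(\theta\in\{\gamma^+,\gamma^-\}\): part~(ii) of Lemma~\ref{lem:switching-three-cases}, which you invoke for non-injectivity, is false in that case. Since \(\theta\theta=\theta\), the equivalence relation generated by \(w\sim\theta w\) is \(w\sim w'\iff\theta w=\theta w'\), because \(w\sim\theta w=\theta w'\sim w'\). Its classes are therefore the fibres of \(w\mapsto\theta w\), not \(\{w,\theta w\}\) as that lemma claims, and they need not be finite, so your ``closed finite relation'' is also inaccurate. For the lemma's paths, \(\gamma^+u\) and \(\gamma^+v\) both equal \((t,0)\), \((a,t-a)\), \((t-a,a)\) on the three thirds of \([0,\epsilon]\). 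Similarly \(\gamma^-u=\gamma^-v\) in the \(\gamma^-\) case, so \(u\sim v\).

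No other choice of paths in the square can fix this. Since \(|Q_\theta|_{\geom}=[0,1]^2/(z\sim\theta z)\), one has \(qw=qw'\) iff \(\theta w=\theta w'\) pointwise, which is exactly the generated relation. The edge components also match, so the comparison map from the coequalizer to \(\Pminus_\A(Q_\theta,\epsilon)\) is bijective. Nor can Lemma~\ref{lem:thick-2dim-case} be sharpened to produce \(c\): the subcategory of \(\SqhatS\) generated by \(\Sq\) and \(\gamma^+\) is a thick category of cubes with \(\A([2],[2])=\{\id,\gamma^+\}\).

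The implication \((2)\Rightarrow(1)\) still holds if you fold a face of a \(3\)-cube instead of the whole square. Let \(K\) be the coequalizer of \(\A(-,\delta^0_3)\) and \(\A(-,\delta^0_3\theta)\colon\A[2]\rightrightarrows\A[3]\). Then \(|K|_{\geom}\) is \([0,1]^3\) with \((z,0)\) identified with \((\theta z,0)\) and no other identifications. With \(a=\epsilon/3\), let \(\phi,\psi\in N_3(\epsilon)\) be given on the three thirds by \((0,t,0),(t-a,a,0),(a,a,t-2a)\) and by \((t,0,0),(a,t-a,0),(a,a,t-2a)\) respectively. Write \(\bar\phi,\bar\psi\) for their first two coordinates on \([0,2a]\); then \(c\bar\phi=\bar\psi\), \(\gamma^+\bar\phi=\bar\psi\) and \(\gamma^-\bar\psi=\bar\phi\). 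Hence, for each of the three \(\theta\), the composites of \(\phi\) and \(\psi\) with \([0,1]^3\to|K|_{\geom}\) coincide, so they give the same point of \(\Pminus_\A(K,\epsilon)\). But both parallel maps \(\Pminus_\A(\A[2],\epsilon)\rightrightarrows\Pminus_\A(\A[3],\epsilon)\) take values in paths contained in the face \(x_3=0\), which \(\phi\) and \(\psi\) leave. So \(\phi\) and \(\psi\) are distinct singleton classes of the set-level coequalizer. Your remarks on underlying sets, and on Hausdorffness of a compact space modulo a closed relation for the \(\Delta\)-Hausdorff variant, then complete the argument uniformly in \(\theta\).
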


\begin{proof}
	The direction \((1)\Rightarrow (2)\) is \cite[Theorem~5.5]{cubical-branching}. Assume now \(\A\neq\Sq\). Using Lemma~\ref{lem:thick-2dim-case}, choose \(\theta\in\{c,\gamma^+,\gamma^-\}\cap\A([2],[2])\).
	Then the diagram of spaces 
	\[
	\begin{tikzcd}[column sep=6em,row sep=3em]
		{\Pminus_\A(\A[2],\epsilon)}
		\arrow[r, shift left=1ex, "\id"]
		\arrow[r, shift right=1ex, "{\Pminus_\A(\A(-,\theta),\epsilon)}"']
		&
		{\Pminus_\A(\A[2],\epsilon)}
		\arrow[r, "q"]
		&
		{\Pminus_\A(Q_\theta,\epsilon)}
	\end{tikzcd}
	\]
	is not a coequalizer by Lemma~\ref{lem:switching-three-cases}. This proves the implication \((2)\Rightarrow(1)\).
\end{proof}

\section{Branching spaces of free transverse sets}
\label{sec:free-transverse}

We explore the case of \(\A\)-sets freely generated by precubical sets, i.e. of the form \(\LA(K)\). Let
\[
\rho_\square:\square^{\op}\Set\longrightarrow(\square^-)^{\op}\Set
\]
denote the restriction functor along the inclusion \(\square^-\subseteq\square\).

\begin{lem}\label{lem:minus-base-change-representables}
	Let \(\A\) be a thick category of cubes. For every \(n\geq 0\), the canonical map of
	\((\A^-)^{\op}\Set\)
	\[
	\LA^-\rho_{\Sq}(\Sq[n])
	\longrightarrow
	\rho_\A\LA(\Sq[n])
	\]
	is an isomorphism.
\end{lem}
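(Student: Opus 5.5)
We identify both sides explicitly, evaluate at an object \([r]\), and show that the comparison map is a bijection there using the uniqueness of the cotransverse--cocubical factorization (Proposition~\ref{prop:decomposition_distance}) together with Lemma~\ref{lem:transverse-box-minus}.

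\textbf{The right-hand side.} Since \(\LA(\Sq[n])\cong\A[n]\) (by \cite[Proposition~2.12]{ThickCubes}), we have
\[
\rho_\A\LA(\Sq[n])([r])\cong\A([r],[n]),
\]
the set of all maps of \(\A\) from \([r]\) to \([n]\), with no condition on the initial vertex.

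\textbf{The left-hand side.} \(\rho_\Sq(\Sq[n])\) is the \(\Sq^-\)-set \([q]\mapsto\Sq([q],[n])\). It decomposes as a coproduct indexed by the vertices \(v\in[n]\): the summand \(S_v\) consists of the cubical maps \(\delta\) with \(\delta(0_q)=v\). This is a decomposition of \(\Sq^-\)-sets, because precomposing with a map of \(\Sq^-\) does not change the image of the initial vertex. The plan is to show that each \(S_v\) is representable. A cubical map sending \(0_q\) to \(v\) is a coface map inserting constants \(\alpha\) in the fixed coordinates; for a fixed \(v\), it is determined by choosing \(q\) of the coordinates where \(v\) vanishes as the free coordinates. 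Write \(k_v\) for the number of zero coordinates of \(v\) and \(\iota_v:[k_v]\to[n]\) for the face with initial vertex \(v\) using all of them. Every \(\delta\in S_v\) then factors uniquely as \(\iota_v\delta'\) with \(\delta'\in\Sq^-([q],[k_v])\). Hence
\[
S_v\cong\Sq^-(-,[k_v]),
\qquad
\LA^-\rho_\Sq(\Sq[n])\cong\coprod_{v\in[n]}\A^-(-,[k_v]).
\]
The comparison map sends \((v,g)\) to \(\iota_v g\).

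\textbf{Bijectivity.} It remains to show that every \(f\in\A([r],[n])\) is uniquely of the form \(\iota_v g\) with \(g\in\A^-([r],[k_v])\).

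For existence, put \(v=f(0_r)\). Factor \(f=\delta\psi\) as in Proposition~\ref{prop:decomposition_distance}; by thickness \(\psi\in\A\). By Proposition~\ref{prop:cotransverse-endo-endpoint-preserving}, \(\psi(0_r)=0_r\), so \(\delta(0_r)=v\) and \(\delta\in S_v\). Writing \(\delta=\iota_v\delta'\), the map \(g=\delta'\psi\) lies in \(\A\) and preserves the initial vertex, so \(g\in\A^-\).

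For uniqueness, \(v\) is forced to equal \(f(0_r)\), since \(g\) preserves \(0\). Then \(\iota_v\) is injective, so \(\iota_v g=\iota_v g'\) implies \(g=g'\).

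\textbf{Main obstacle.} The delicate step is justifying the decomposition of \(\rho_\Sq(\Sq[n])\) into representables \(\Sq^-(-,[k_v])\). This requires a careful description of cubical maps as coface composites with a prescribed initial vertex, and in particular the uniqueness of the factorization through \(\iota_v\). Naturality of the identification in \([r]\) then follows because both sides are computed by composition.
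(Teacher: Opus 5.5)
Your proof is correct, and it handles the left-hand side differently from the paper. The paper never decomposes $\rho_{\Sq}(\Sq[n])$. It works directly with the coend $\int^{[q]\in\Sq^-}\Sq([q],[n])\times\A^-([r],[q])$ and proves surjectivity exactly as you do. For injectivity it rewrites a representative $[\alpha,g]$ as $[\alpha\epsilon,\psi]$, where $g=\epsilon\psi$ is the factorization of $g$. Lemma~\ref{lem:transverse-box-minus} is needed at that point to know that $\epsilon\in\Sq^-$, so that the coend relation applies. The uniqueness part of Proposition~\ref{prop:decomposition_distance} then identifies two such normal forms.

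You instead compute the coend first via the corner decomposition $\rho_{\Sq}(\Sq[n])\cong\coprod_{v\in[n]}\Sq^-(-,[k_v])$. Your $\iota_v$ is the paper's $\kappa_a$, and the paper only introduces this decomposition later, for Lemma~\ref{lem:old-precubical-coend}. The left side then becomes $\coprod_{v}\A^-(-,[k_v])$, and injectivity reduces to $v=f(0_r)$ together with injectivity of $\iota_v$. So your argument uses only the existence of the factorization, thickness, and Proposition~\ref{prop:cotransverse-endo-endpoint-preserving}. Contrary to what your opening sentence announces, you need neither uniqueness of the factorization nor Lemma~\ref{lem:transverse-box-minus}.

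What your route buys is a sharper structural fact. Every morphism $[r]\to[n]$ of $\A$ factors uniquely as a morphism of $\A^-$ followed by a corner inclusion $\iota_v$, i.e.\ $\rho_\A(\A[n])\cong\coprod_{v\in[n]}\A^-(-,[k_v])$. The paper's route avoids the combinatorics of box maps with a prescribed initial vertex, and it runs in parallel with the proof of Lemma~\ref{lem:Aminus-boundary}.
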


\begin{proof}
	Since \(\LA(\Sq[n])\cong \A(-,[n])\), it suffices to prove that, for every \(r\geq 0\), the canonical map
	\[
	\Phi_{r,n}:
	\int^{[q]\in\Sq^-}
	\Sq([q],[n])\times \A^-([r],[q])
	\longrightarrow
	\A([r],[n])
	\]
	induced by composition is a bijection. Explicitly,
	\[
	\Phi_{r,n}([\alpha,g])=\alpha g
	\]
	for \(g:[r]\to[q]\) in \(\A^-\) and \(\alpha:[q]\to[n]\) in \(\Sq\). We first prove surjectivity. Let \(f:[r]\to[n]\) be a morphism of \(\A\). Write its cotransverse--cocubical factorization as
	\[
	f=\delta\psi,
	\qquad
	\psi:[r]\to[r],
	\qquad
	\delta:[r]\to[n],
	\]
	with \(\delta\in\Sq([r],[n])\). Since \(\A\) is thick, the cotransverse endomorphism \(\psi\) belongs to \(\A([r],[r])\). By Proposition~\ref{prop:cotransverse-endo-endpoint-preserving}, it preserves the initial vertex, and therefore
	\[
	\psi\in\A^-([r],[r]).
	\]
	Thus \(f\) is the image of the class represented by
	\[
	[r]\xrightarrow{\psi}[r]\xrightarrow{\delta}[n].
	\]
	We now prove injectivity. Suppose that two representatives
	\[
	[r]\xrightarrow{g}[q]\xrightarrow{\alpha}[n],
	\qquad
	[r]\xrightarrow{h}[q']\xrightarrow{\beta}[n]
	\]
	have the same image in \(\A([r],[n])\), so that
	\[
	\alpha g=\beta h.
	\]
	Since \(g\) and \(h\) are morphisms of \(\A^-\), Lemma~\ref{lem:transverse-box-minus} gives their cotransverse--cocubical factorizations inside \(\A^-\):
	\[
	g=\epsilon\psi,
	\qquad
	h=\eta\chi,
	\]
	with
	\[
	\epsilon\in\Sq^-([r],[q]),
	\qquad
	\eta\in\Sq^-([r],[q']),
	\qquad
	\psi,\chi\in\A^-([r],[r]).
	\]
	The defining relations of the coend give
	\[
	[\alpha,g]=[\alpha\epsilon,\psi],
	\qquad
	[\beta,h]=[\beta\eta,\chi].
	\]
	The equality \(\alpha g=\beta h\) becomes
	\[
	\alpha\epsilon\psi=\beta\eta\chi.
	\]
	Here \(\alpha\epsilon\) and \(\beta\eta\) are morphisms of \(\Sq([r],[n])\). By the uniqueness of the cotransverse--cocubical factorization, we obtain
	\[
	\alpha\epsilon=\beta\eta
	\qquad\text{and}\qquad
	\psi=\chi.
	\]
	Therefore
	\[
	[\alpha,g]
	=[\alpha\epsilon,\psi]
	=[\beta\eta,\chi]
	=[\beta,h].
	\]
	This proves injectivity. Hence \(\Phi_{r,n}\) is a bijection for all \(r,n\), and the canonical map is an isomorphism of \((\A^-)^{\op}\Set\).
\end{proof}

\begin{lem}\label{lem:minus-base-change}
	Let \(\A\) be a thick category of cubes. There is a natural isomorphism of functors
	\[
	\LA^-\rho_{\Sq}
	\cong
	\rho_\A\LA
	:
	\Sq^{\op}\Set\longrightarrow (\A^-)^{\op}\Set .
	\]
\end{lem}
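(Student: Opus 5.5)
The plan is to reduce the statement to the case of representables, which is Lemma~\ref{lem:minus-base-change-representables}, by a density argument. First, I will construct the natural comparison transformation
\[
\kappa:\LA^-\rho_{\Sq}\longrightarrow \rho_\A\LA
\]
on all of \(\Sq^{\op}\Set\). The square of inclusions \(\Sq^-\subseteq\Sq\), \(\Sq^-\subseteq\A^-\), \(\Sq\subseteq\A\), \(\A^-\subseteq\A\) commutes, so the restrictions satisfy \(\rho_{\Sq}\omega_\A\cong\omega_{\Sq^-}\circ(\text{restriction along }\A^-\subseteq\A)\), where \(\omega_{\Sq^-}\) is restriction along \(j\). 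The comparison \(\kappa\) is then the mate of this commutation: start from the unit \(K\to\omega_\A\LA K\), apply \(\rho_\Sq\), rewrite the target as the restriction along \(j\) of \(\rho_\A\LA K\), and take the adjoint map under \(\LA^-\dashv\) restriction along \(j\). Concretely, at \([r]\) it sends \([x,g]\), with \(x\in K_q\) and \(g\in\A^-([r],[q])\), to the element \(g^*[x,\id]\) of \(\LA(K)_r\). On \(K=\Sq[n]\) this is exactly the map \(\Phi_{r,n}\), i.e.\ the canonical map of Lemma~\ref{lem:minus-base-change-representables}. Naturality in \(K\) is formal.

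Second, I will check that both functors \(\LA^-\rho_\Sq\) and \(\rho_\A\LA\) preserve all small colimits. The functors \(\LA^-\) and \(\LA\) are left adjoints. The functors \(\rho_\Sq\) and \(\rho_\A\) are restriction functors between presheaf categories, so they preserve colimits because colimits are computed pointwise (this was already noted for \(\rho_\A\)).

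Third, every precubical set \(K\) is canonically the colimit of representables over its category of elements, \(K\cong\varinjlim_{(\Sq[n]\to K)}\Sq[n]\). Applying the two colimit-preserving functors and the natural transformation \(\kappa\) exhibits \(\kappa_K\) as a colimit of the maps \(\kappa_{\Sq[n]}\). These are isomorphisms by Lemma~\ref{lem:minus-base-change-representables}, so \(\kappa_K\) is an isomorphism, natural in \(K\).

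The only step requiring care is the first: making sure the transformation defined on all presheaves restricts on representables to the same map \(\Phi_{r,n}\) proved bijective in Lemma~\ref{lem:minus-base-change-representables}, through the identification \(\LA(\Sq[n])\cong\A[n]\). I would verify this directly with the coend description: under the Yoneda isomorphism, the class \([\id_{[n]},\id]\) in \(\LA(\Sq[n])_n\) corresponds to \(\id_{[n]}\), so \([\alpha,g]\mapsto\alpha g\). Everything else is formal.
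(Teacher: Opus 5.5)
Your proposal is correct and follows the same route as the paper. Both functors preserve colimits, being left adjoints composed with pointwise restrictions; every precubical set is a colimit of representables; and the representable case is Lemma~\ref{lem:minus-base-change-representables}. Your explicit construction of the global comparison map as the mate of the commuting square of inclusions, together with your check that it restricts to \(\Phi_{r,n}\) on \(\Sq[n]\), supplies a detail the paper leaves implicit (it only speaks of ``the canonical map''), and that detail is exactly what makes the density argument legitimate.
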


\begin{proof}
	Both functors preserve colimits. Indeed, \(\LA\) and \(\LA^-\) are left adjoints, and \(\rho_\A\) and \(\rho_{\Sq}\) preserve colimits because colimits of presheaves are computed pointwise. Since every precubical set is a colimit of representables, it is enough to compare the two functors on the representables \(\Sq[n]\). This is precisely Lemma~\ref{lem:minus-base-change-representables}.
\end{proof}

\begin{lem}\label{lem:Nhat-after-Lminus}
	For every \(\Sq^-\)-set \(H\), there is a natural homeomorphism
	\[
	\widehat{N_\epsilon}(\LA^- H)
	\cong
	\int^{[q]\in\Sq^-} H([q])\cdot N_q(\epsilon).
	\]
\end{lem}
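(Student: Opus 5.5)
The plan is to compose two colimit-preserving functors and identify the composite with the coend on the right. By definition, \(\widehat N_\epsilon\) is the colimit-preserving extension of \(N(\epsilon):\A^-\to\Top\). Likewise, \(\LA^-\) is the left Kan extension along \(j^\op:(\Sq^-)^\op\to(\A^-)^\op\), so it is colimit-preserving. Hence \(\widehat N_\epsilon\circ\LA^-\) preserves colimits. On the other side, the right-hand side \(H\mapsto \int^{[q]\in\Sq^-}H([q])\cdot N_q(\epsilon)\) is the colimit-preserving functor associated (by \cite[Remark~3.2.3]{coend-calculus}) with the restricted diagram \(N(\epsilon)\circ j:\Sq^-\to\Top\). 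By the bijective correspondence between diagrams and colimit-preserving functors, it therefore suffices to produce a natural isomorphism between the two functors on representables, natural in \([q]\in\Sq^-\).

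The key steps are as follows. First, compute \(\widehat N_\epsilon(\LA^-(\Sq^-(-,[q])))\). Using the isomorphism \(\LA^-(\Sq^-(-,[q]))\cong\A^-(-,[q])\) recalled just before Lemma~\ref{lem:transverse-box-minus}, and then the Yoneda lemma (co-Yoneda form of the coend), this is \(\int^{[n]\in\A^-}\A^-([n],[q])\cdot N_n(\epsilon)\cong N_q(\epsilon)\). Second, the right-hand side evaluated at \(\Sq^-(-,[q])\) is likewise \(N_q(\epsilon)\) by the co-Yoneda lemma over \(\Sq^-\). Third, check naturality in \([q]\): a morphism \(\delta\in\Sq^-([q],[q'])\) acts on both sides by \(\T(\delta)\) restricted to \(N_q(\epsilon)\to N_{q'}(\epsilon)\). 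This holds because both Yoneda isomorphisms send \([\id_{[q]},\phi]\) to \(\phi\), and \(\LA^-\) sends \(\Sq^-(-,\delta)\) to \(\A^-(-,j\delta)\).

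Alternatively, I would give a one-line coend-calculus argument. Write \(\LA^-H\cong\int^{[q]\in\Sq^-}H([q])\cdot\A^-(-,[q])\). Then use that \(\widehat N_\epsilon\) commutes with coends and copowers, since it is colimit-preserving. This gives \(\int^{[q]}H([q])\cdot\widehat N_\epsilon(\A^-[q])\cong\int^{[q]}H([q])\cdot N_q(\epsilon)\) by Fubini and Yoneda. This route yields naturality in \(H\) automatically.

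The main obstacle is minor. One must make sure that the Yoneda identification is an isomorphism of spaces, not only of sets. The coend \(\int^{[n]}\A^-([n],[q])\cdot N_n(\epsilon)\) in \(\Top\) is the quotient of a coproduct of copies of \(N_n(\epsilon)\). The co-Yoneda lemma holds in any cocomplete category for copowers by sets, so the map to \(N_q(\epsilon)\) is a homeomorphism. I would state this explicitly, noting that it holds in both versions of \(\Top\) (with and without the \(\Delta\)-Hausdorff condition) since each is cocomplete. No thickness hypothesis is needed here.
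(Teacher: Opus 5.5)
Your proposal is correct. Your second, coend-calculus version follows the paper's proof: write \(\LA^- H\) via the coend formula for the left Kan extension, move \(\widehat N_\epsilon\) inside by Fubini, and finish with co-Yoneda. Your primary route packages the same content through the universal property of presheaf categories, namely that two colimit-preserving functors agreeing naturally on representables are isomorphic. You are also right that thickness plays no role here.
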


\begin{proof}
	Using the coend formula for the left Kan extension, Fubini's theorem for coends, and the co-Yoneda lemma, we obtain the following natural sequence of homeomorphisms:
	\begin{align*}
		\widehat{N_\epsilon}(\LA^- H)
		&=
		\int^{[p]\in\A^-}
		(\LA^- H)([p])\cdot N_p(\epsilon)
		\nobreak\\
		&\cong
		\int^{[p]\in\A^-}
		\left(
		\int^{[q]\in\Sq^-}
		H([q])\times \A^-([p],[q])
		\right)
		\cdot N_p(\epsilon)
		\nobreak\\
		&\cong
		\int^{[q]\in\Sq^-}
		H([q])\cdot
		\left(
		\int^{[p]\in\A^-}
		\A^-([p],[q])\cdot N_p(\epsilon)
		\right)
		\nobreak\\
		&\cong
		\int^{[q]\in\Sq^-}
		H([q])\cdot N_q(\epsilon).
	\end{align*}
\end{proof}

For a vertex \(a=(a_1,\ldots,a_n)\in [n]\), put \[ r(a)=n-(a_1+\cdots+a_n)=\dd(a,1_n). \] Let \[ \kappa_a:[r(a)]\longrightarrow [n] \] be the unique box map whose image is the face spanned by the interval \([a,1_n]\), that is, the unique box map satisfying \[ \kappa_a(0_{r(a)})=a \qquad\text{and}\qquad \kappa_a(1_{r(a)})=1_n. \] Explicitly, \(\kappa_a\) fixes to \(1\) the coordinates \(i\) such that \(a_i=1\), and inserts the \(r(a)\) variables, in their natural order, in the coordinates \(i\) such that \(a_i=0\). We call the representable \(\Sq^-(-,[r])\) the \textit{standard \(r\)-corner}. With this terminology, the restriction of the representable precubical \(n\)-cube to \(\Sq^-\) is the coproduct of its corners: \[ \rho_\Sq(\Sq[n]) \cong \coprod_{a\in [n]} \Sq^-(-,[r(a)]). \] Indeed, evaluating at \([q]\), this isomorphism sends a morphism \[ \beta:[q]\longrightarrow [r(a)] \] of \(\Sq^-\) to the composite \[ [q]\xrightarrow{\beta}[r(a)]\xrightarrow{\kappa_a}[n]. \] This map is bijective. If \(\alpha:[q]\to[n]\) is a box map, then its initial vertex \(a=\alpha(0_q)\) determines the component. For every coordinate \(i\) with \(a_i=1\), the \(i\)-th coordinate of \(\alpha\) is constantly \(1\). On the remaining coordinates, \(\alpha\) is uniquely encoded by a morphism \[ \beta:[q]\longrightarrow [r(a)] \] of \(\Sq^-\), and \(\alpha=\kappa_a\beta\). This decomposition is clearly natural with respect to precomposition by morphisms of \(\Sq^-\).


\begin{lem}\label{lem:old-precubical-coend}
	For every precubical set \(K\), there is a natural homeomorphism
	\[
	\int^{[q]\in\Sq^-} K_q\cdot N_q(\epsilon)
	\xlongrightarrow{\cong}
	\Pminus_\Sq(K,\epsilon).
	\]
	For \(K=\Sq[n]\), this homeomorphism sends the class of a pair
	\[
	\alpha:[q]\to[n],
	\qquad
	u\in N_q(\epsilon),
	\]
	with \(\alpha\in\Sq([q],[n])\), to the short natural directed path
	\[
	\T(\alpha)u\in \Pminus_\Sq(\Sq[n],\epsilon).
	\]
\end{lem}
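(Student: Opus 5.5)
The plan has three steps: reduce to representables, compute both sides on a representable using the corner decomposition, and then extend by colimits, which is the hard part.

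\textbf{Reduction to representables.} The left-hand side is a colimit-preserving functor of \(K\). It factors as \(K\mapsto \rho_\Sq K\) followed by the coend, and restriction preserves colimits. By \cite[Theorem~5.5]{cubical-branching}, the right-hand side \(\Pminus_\Sq(-,\epsilon)\) also preserves colimits. Both are functorial in \(K\), and every precubical set is canonically a colimit of representables. So it suffices to do two things. First, construct a natural map
\[
\int^{[q]\in\Sq^-} K_q\cdot N_q(\epsilon)\longrightarrow \Pminus_\Sq(K,\epsilon),
\]
sending the class of \((x,u)\) with \(x\in K_q\), \(u\in N_q(\epsilon)\), to \(|x|_{\geom}\circ u\). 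Second, check that it is a homeomorphism on each \(\Sq[n]\), naturally in \(n\in\Sq\).

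\textbf{Well-definedness and the map on representables.} The map is well defined because of two facts. First, \(|x\cdot\delta|_{\geom}=|x|_{\geom}\circ\T(\delta)\) for \(\delta\in\Sq^-\). Second, \(\T(\delta)\) maps \(N_p(\epsilon)\) into \(N_q(\epsilon)\) by the proposition on \(\A^-\) applied with \(\A=\Sq\). The map is continuous on each summand, since composition with a fixed map is continuous for the compact-open topology. It lands in \((\Pminus_\Sq)_{x^-}\) because \(u(0)=0_q\). For \(q=0\), \(N_0(\epsilon)=\varnothing\), which matches the condition \(\dim(c)\geq 1\). For \(K=\Sq[n]\), the map sends \([\alpha,u]\) to \(\T(\alpha)u\), as in the statement.

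\textbf{Bijectivity on \(\Sq[n]\).} Use the corner decomposition \(\rho_\Sq(\Sq[n])\cong\coprod_{a\in[n]}\Sq^-(-,[r(a)])\) established just before the statement. Together with the co-Yoneda lemma it gives
\[
\int^{[q]\in\Sq^-}\Sq([q],[n])\cdot N_q(\epsilon)\cong\coprod_{a\in[n]} N_{r(a)}(\epsilon),
\]
and under this identification the map becomes \(u\mapsto\T(\kappa_a)u\) on the \(a\)-summand. On the other side, \(\Pminus_\Sq(\Sq[n],\epsilon)\) is a disjoint union over vertices \(a\) of the sets of paths \(\T(\alpha)\phi\) with \(\alpha(0)=a\). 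Each such path factors as \(\T(\kappa_a)\T(\beta)\phi\) with \(\T(\beta)\phi\in N_{r(a)}(\epsilon)\). Hence the \(a\)-component of \(\Pminus_\Sq(\Sq[n],\epsilon)\) is exactly \(\T(\kappa_a)(N_{r(a)}(\epsilon))\), which gives surjectivity. Injectivity follows because \(\T(\kappa_a)\) is an injective affine embedding of \([0,1]^{r(a)}\) onto a face.

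\textbf{Homeomorphism.} Each \(N_{r(a)}(\epsilon)\) is compact. The target is \(\Delta\)-Hausdorff, since it is a subspace of a metrizable path space up to \(\Delta\)-Kelleyfication. A continuous bijection from a finite coproduct of compact spaces onto such a space is therefore a homeomorphism. Alternatively, one can write down the inverse explicitly by composing with the continuous retraction of \([0,1]^n\) onto the face \([a,1_n]\).

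\textbf{Extension to all \(K\) (main obstacle).} Naturality in \(\Sq[n]\) is immediate from the formula. The remaining and hardest step is to justify passing from representables to arbitrary \(K\). I would avoid re-proving colimit preservation and quote \cite[Theorem~5.5]{cubical-branching} for the right-hand side. Then the comparison of two colimit-preserving functors agreeing naturally on representables is an isomorphism. The one delicate point is that the colimit structure map of \(\Pminus_\Sq\) must be the one induced by \(c\mapsto|c|_{\geom}\circ-\), which holds by construction.
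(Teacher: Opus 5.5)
Your proposal is correct and follows essentially the same route as the paper. You define \([x,u]\mapsto|x|_{\geom}\circ u\), reduce to representables using that both sides preserve colimits (via \cite[Theorem~5.5]{cubical-branching} for \(\Pminus_\Sq\)), and identify both sides on \(\Sq[n]\) through the corner decomposition with \(\coprod_{a\in[n]}N_{r(a)}(\epsilon)\), the \(a\)-summand mapping by \(u\mapsto\T(\kappa_a)u\).

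You give more detail than the paper on bijectivity and on why the map is a homeomorphism. Of your two arguments for the homeomorphism, the explicit inverse (projecting onto the free coordinates of the face \([a,1_n]\)) is the cleaner one. The compactness argument actually relies on the metrizable compact-open topology before \(\Delta\)-Kelleyfication, not on \(\Delta\)-Hausdorffness alone.
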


\begin{proof} 
	The displayed map is well-defined because, for every morphism \(v:[p]\to[q]\) of \(\Sq^-\), one has \[ \T(\alpha v)=\T(\alpha)\T(v), \] and the coend relation identifies \((\alpha v,u)\) with \((\alpha,\T(v)u)\). Both sides are colimit-preserving with respect to the precubical set \(K\): the left-hand side is colimit-preserving by the coend formula, and the right-hand side is colimit-preserving by \cite[Theorem~5.5]{cubical-branching}. Since every precubical set is a colimit of representables, it remains to prove the result for \(K=\Sq[n]\). Using the corner decomposition above, we obtain \[ \begin{aligned} \int^{[q]\in\Sq^-}\Sq([q],[n])\cdot N_q(\epsilon) &\cong \int^{[q]\in\Sq^-} \left( \coprod_{a\in[n]}\Sq^-([q],[r(a)]) \right)\cdot N_q(\epsilon) \\ &\cong \coprod_{a\in[n]} \int^{[q]\in\Sq^-} \Sq^-([q],[r(a)])\cdot N_q(\epsilon) \\ &\cong \coprod_{a\in[n]} N_{r(a)}(\epsilon), \end{aligned} \] by the co-Yoneda lemma. Under this identification, the summand indexed by \(a\) is sent to \(\Pminus_\Sq(\Sq[n],\epsilon)\) by \[ u\longmapsto \T(\kappa_a)u. \] This is exactly the space of short natural directed paths in the precubical cube \(\Sq[n]\) starting from the vertex \(a\): the map \(\kappa_a\) identifies the standard \(r(a)\)-corner with the face interval \([a,1_n]\). Taking the coproduct over all vertices \(a\in[n]\) gives precisely \(\Pminus_\Sq(\Sq[n],\epsilon)\). This proves the assertion for representables, and hence for all precubical sets by colimit preservation. 
\end{proof}

\begin{thm}\label{thm:comparison-with-precubical-branching}
	Let \(\A\) be a thick category of cubes, and let \(\epsilon\in]0,1[\). For every precubical set \(K\), there is a natural homeomorphism
	\[
	\widehat{N_\epsilon}\bigl(\rho_\A\LA(K)\bigr)
	\cong
	\Pminus_\Sq(K,\epsilon).
	\]
	Equivalently,
	\[
	B^-_\epsilon(\LA(K))
	\cong
	\Pminus_\Sq(K,\epsilon),
	\]
	where \(B^-_\epsilon=\widehat{N_\epsilon}\rho_\A\).
\end{thm}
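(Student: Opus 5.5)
The proof chains together the three preceding lemmas of this section. Fix a precubical set \(K\).

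First, Lemma~\ref{lem:minus-base-change} gives a natural isomorphism of \(\A^-\)-sets
\[
\rho_\A\LA(K)\cong \LA^-\rho_{\Sq}(K).
\]
Applying the functor \(\widehat{N_\epsilon}\) yields a natural homeomorphism
\[
\widehat{N_\epsilon}(\rho_\A\LA(K))\cong \widehat{N_\epsilon}(\LA^-\rho_\Sq K).
\]

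Second, Lemma~\ref{lem:Nhat-after-Lminus} is applied with \(H=\rho_\Sq K\). Since \((\rho_\Sq K)([q])=K_q\), this rewrites the right-hand side as the coend
\[
\int^{[q]\in\Sq^-}K_q\cdot N_q(\epsilon).
\]

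Third, Lemma~\ref{lem:old-precubical-coend} identifies this coend naturally with \(\Pminus_\Sq(K,\epsilon)\). Composing the three natural homeomorphisms gives the statement. The second formulation is just the definition \(B^-_\epsilon=\widehat{N_\epsilon}\rho_\A\) from Definition~\ref{def:branching-space}.

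The only point needing care is naturality in \(K\), so that the composite is a natural transformation and not merely an objectwise homeomorphism. Each of the three isomorphisms is natural in its argument: Lemma~\ref{lem:minus-base-change} is an isomorphism of functors, and Lemma~\ref{lem:Nhat-after-Lminus} is natural in \(H\), so precomposing with \(\rho_\Sq\) keeps it natural in \(K\). The main thing to check is that the identification \((\rho_\Sq K)([q])=K_q\) is compatible with the \(\Sq^-\)-actions used in both coends. This holds because restriction does not change the structure maps. No topological subtlety arises, because all coends are taken in \(\Top\) and every step is a canonical colimit manipulation.

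As a sanity check, one can trace a representable \(K=\Sq[n]\) through the chain. Start with the class \([\alpha,u]\), where \(\alpha\in\Sq([q],[n])\) lies in the \(a\)-corner and \(u\in N_q(\epsilon)\). Under the three identifications it is sent to \(\T(\alpha)u\). This agrees with the explicit description in Lemma~\ref{lem:old-precubical-coend}, confirming that the composite homeomorphism sends a class to the expected directed path in the geometric realization.
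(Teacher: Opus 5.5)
Your proposal is correct and is the same argument as the paper's proof. Both chain Lemma~\ref{lem:minus-base-change}, Lemma~\ref{lem:Nhat-after-Lminus} with \(H=\rho_{\Sq}K\), and Lemma~\ref{lem:old-precubical-coend}, and your trace through a representable \(\Sq[n]\), sending \([\alpha,u]\) to \(\T(\alpha)u\), matches the explicit description the paper gives right after the theorem.
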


\begin{proof}
	By Lemma~\ref{lem:minus-base-change}, there is a natural isomorphism
	\[
	\rho_\A\LA(K)
	\cong
	\LA^-\rho_{\Sq}(K).
	\]
	Applying \(\widehat{N_\epsilon}\) and then Lemma~\ref{lem:Nhat-after-Lminus}, we obtain
	\[
	\begin{aligned}
		\widehat{N_\epsilon}\bigl(\rho_\A\LA(K)\bigr)
		&\cong
		\widehat{N_\epsilon}\bigl(\LA^-\rho_{\Sq}(K)\bigr) \\
		&\cong
		\int^{[q]\in\Sq^-} \rho_{\Sq}(K)([q])\cdot N_q(\epsilon) \\
		&=
		\int^{[q]\in\Sq^-} K_q\cdot N_q(\epsilon).
	\end{aligned}
	\]
	The last coend is naturally homeomorphic to \(\Pminus_\Sq(K,\epsilon)\) by Lemma~\ref{lem:old-precubical-coend}. This proves the assertion. 
\end{proof}

For the representable precubical set \(K=\Sq[n]\), the resulting homeomorphism is the explicit map
\[
\widehat{N_\epsilon}\bigl(\rho_\A\LA(\Sq[n])\bigr)
\cong
\int^{[q]\in\Sq^-}\Sq([q],[n])\cdot N_q(\epsilon)
\longrightarrow
\Pminus_\Sq(\Sq[n],\epsilon)
\]
sending \([\alpha,u]\) to \(\T(\alpha)u\). Notice that \(\alpha\) is an arbitrary morphism of \(\Sq\), not necessarily a morphism of \(\Sq^-\). This is why the construction recovers short natural directed paths starting from every vertex of the precubical cube \(\Sq[n]\), not only those starting from \(0_n\).


\end{document}